\documentclass[10pt, english]{amsart}
\usepackage{amssymb,amsthm,amsmath,amstext,enumitem}
\usepackage{mathrsfs}

\usepackage[utf8]{inputenc}

\usepackage[left=2.6cm,top=2.6cm,right=2.6cm,bottom=2.6cm]{geometry}
\usepackage[all]{xy}
\usepackage{eucal}

\usepackage{hyperref}
\usepackage{comment}

\title{Logarithmic derivations of adjoint discriminants}

\author{Vladimiro Benedetti}

\author{Daniele Faenzi}

\author{Simone Marchesi}

\thanks{
D.F. and V.B. partially supported by FanoHK ANR-20-CE40-0023, SupToPhAG/EIPHI ANR-17-EURE-0002, Région Bourgogne-Franche-Comté, Feder Bourgogne and Bridges ANR-21-CE40-0017. S.M. partially supported by PID2020-113674GB-I00 and by the Spanish State Research Agency, through the Severo Ochoa and Mar\'ia de Maeztu Program for Centers and Units of Excellence in R\&D (CEX2020-001084-M).
}

\keywords{Logarithmic derivations. Discriminants. Adjoint orbits. Sheaves of logarithmic differentials.}

\subjclass{14B05; 14M17}


\theoremstyle{plain}
\newtheorem{maintheorem}{Theorem}
\newtheorem{theorem}{Theorem}[section]
\newtheorem*{conjecture*}{Conjecture}
\newtheorem{proposition}[theorem]{Proposition}
\newtheorem{lemma}[theorem]{Lemma}
\newtheorem{corollary}[theorem]{Corollary}
\theoremstyle{definition}
\newtheorem{definition}[theorem]{Definition}
\newtheorem{remark}[theorem]{Remark}
\newtheorem{notation}[theorem]{Notation}
\newtheorem{example}[theorem]{Example}

\DeclareMathOperator{\Ext}{Ext}

\DeclareMathOperator{\IM}{Im}
\DeclareMathOperator{\Hom}{Hom}
\DeclareMathOperator{\rank}{rank}
\DeclareMathOperator{\coker}{coker}

\DeclareMathOperator{\length}{length}

\DeclareMathOperator{\Spec}{Spec}
\DeclareMathOperator{\Proj}{Proj}
\DeclareMathOperator{\Der}{Der}
\DeclareMathOperator{\Frac}{Frac}

\DeclareMathOperator{\Tot}{Tot}

\DeclareMathOperator{\rN}{N}
\DeclareMathOperator{\rs}{ss}

\def\Vv{V}

\def\bk{{\mathbf{k}}}
 
\def\bU{{\mathbf{U}}}
\def\bA{{\mathbf{A}}}
\def\bI{{\mathbf{I}}}
\def\bad{{\mathbf{ad}}}
\def\bR{{\mathbf{R}}}
\def\bF{{\mathbf{F}}}
\def\bS{{\mathbf{S}}}
 \def\II{{\mathbb{I}}}  \def\PP{{\mathbb{P}}}   \def\WW{{\mathbb{W}}} \def\ZZ{{\mathbb{Z}}} \def\VV{{\mathbb{V}}}

\def\SL{\mathrm{SL}}

\def\cE{{\mathcal{E}}}
\def\cF{{\mathcal{F}}}

\def\cI{{\mathcal{I}}}
\def\cJ{{\mathcal{J}}}

\def\cL{{\mathcal{L}}}
\def\cN{{\mathcal{N}}}
\def\cO{{\mathcal{O}}}

\def\cT{{\mathcal{T}}}
\def\cQ{{\mathcal{Q}}}
\def\cU{{\mathcal{U}}}

\def\cP{{\mathcal{P}}}
\def\fg{{\mathfrak g}}

\def\fsl{\mathfrak{sl}}
\def\fe{\mathfrak{e}}
\def\ff{\mathfrak{f}}
\def\fsp{\mathfrak{sp}}
\def\fso{\mathfrak{so}}
\def\fh{\mathfrak{h}}
\def\git{/\!\!/}

\def\GL{{\mathrm{GL}}}
\def\Sp{{\mathrm{Sp}}}
\def\SO{{\mathrm{SO}}}
\def\O{{\mathrm{O}}}

\def\Proj{{\mathrm{Proj}}}
\def\log{{\mathrm{log}}}
\def\sm{{\mathbf{sm}}}

\def\iff{\Leftrightarrow}
\def\TD{\cT_{\check \PP}\langle D \rangle}
\def\pip{\pi}
\def\rhop{\rho}

\usepackage[usenames,dvipsnames]{xcolor}

\begin{document}

\sloppy

\begin{abstract}
We exhibit a relationship between projective duality and the sheaf of logarithmic vector fields along a reduced divisor $D$ of projective space, in that the push-forward of the ideal sheaf of the conormal variety in the point-hyperplane incidence, twisted by the tautological ample line bundle is isomorphic to logarithmic differentials along $D$.

Then we focus on the adjoint discriminant $D$ of a simple Lie group with Lie algebra $\fg$ over an algebraically closed field $\bk$ of characteristic zero and study the logarithmic module $\Der_\bU(-\log(D))$ over $\bU=\bk[\fg]$. 
When $\fg$ is simply laced, we show that this module has two direct summands: the $G$-invariant part, which is free with generators in degrees equal to the exponents of $G$, and the $G$-variant part, which is of projective dimension one, presented by the Jacobian matrix of the basic invariants of $G$ and isomorphic to the image of the map $\bad : \fg \otimes \bU(-1) \to \fg \otimes \bU$ given by the Lie bracket.

When $\fg$ is not simply laced, we give a length-one equivariant graded free resolution of $\Der_\bU(-\log(D))$ in terms of the exponents of $G$ and of the quasi-minuscule representation of $G$.
\end{abstract}

\maketitle

\section*{Introduction}

Consider a reduced hypersurface $D=\VV(F)$ of the projective space $\PP$ over an algebraically closed field $\bk$ of characteristic $0$, defined by a homogeneous square-free form $F$ in the polynomial coordinate ring $\bU$ of $\PP$.
The sheaf of logarithmic derivations $\cT_{\PP}\langle D \rangle$ is the dual of Deligne's sheaf of 1-forms on $\PP$ with logarithmic poles along $D$ (see \cite{deligne:hodge}). This sheaf was first studied by Saito (see \cite{saito:logarithmic}) in connection with discriminants of simple singularities and later on it has found rather diverse applications, for instance in the theory of arrangements and free divisors (starting from \cite{terao:freeness-I, terao:freeness-II, Terao}, for an overview see  \cite{orlik-terao:arrangements, dimca:arrangements}), unfolding of singularities (see \cite{buchweitz-ebeling-von_bothmer,damon:legacy-I}) and locally trivial deformations (see \cite{sernesi:deformations}).

One of the main questions about the sheaf of logarithmic derivations or its associated graded module of global sections  $\Der_\bU(-\log(F))$,
is to compute the graded Betti numbers of this module. 
As it turns out, information on the projective dimension or on the number and minimal degree of minimal generators is in general quite hard to acquire. Even the freeness of this module is the object of an important open question by Terao (see \cite{Terao}) to the effect that, when $D$ is a hyperplane arrangement, freeness depends only on the isomorphism class of the intersection lattice of the hyperplanes. 
Freeness holds for some important classes of arrangements connected with Weyl groups of simple Lie algebras, see for instance \cite{yoshinaga:characterization}. However, in spite of recent developments (see \cite{dipasquale:characterization}), even for these arrangements our knowledge of Betti numbers beyond the free range is very partial, see \cite[Conjecture 1]{abe-vaenzi-valles} and \cite{abe-faenzi} for recent developments.

From a different perspective, a very powerful tool to compute graded free resolutions is provided a method based on resolutions of singularities arising from Kempf collapsing as introduced in \cite{kempf:collapsing}; we refer for instance to \cite{lascoux:determinantal} for applications to the free resolution of determinantal ideals. We refer to \cite{Weyman} for a complete presentation of this method and of its many applications. This has been studied with particular emphasis in the framework of projective duality in \cite{gelfand-kapranov-zelevinsky}. However, to our knowledge, this method has not been applied so far to logarithmic derivations.

The first main result of the present paper outlines a simple and deep connection between logarithmic derivations and projective duality. This is the first step in order to apply the geometric method to the computation of Betti numbers of modules of logarithmic derivations. 

To explain this connection, we think of $D$ as sitting in the dual space $\check \PP$ of hyperplanes of $\PP$ and consider the projective dual $X$ of $D$. The \textit{conormal} variety $\WW_X$ of $X$, namely the blow-up of $D$ along its Jacobian scheme, sits in the point-hyperplane incidence $\II \subset \PP \times \check \PP$. 
Let $\tilde \rhop : \II \to \check \PP$ be the projection onto the second factor.

\begin{maintheorem} \label{main:direct image}
Let $h$ be the pull back to $\II$ of the hyperplane class of $X$.
Then: $$
    \TD(-1) \simeq \tilde{\rhop}_* \left(\cI_{\WW_X/\II}(h)\right).
$$
\end{maintheorem}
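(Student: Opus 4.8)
The plan is to exhibit both sides as subsheaves of one and the same locally free sheaf on $\check\PP$, and to check that they are the kernels of the same morphism. Write $\PP=\PP(V)$ and $\check\PP=\PP(V^*)$ with $\dim V=\ell+1$, let $\pi\colon\II\to\PP$ be the first projection, and note $\cO_\II(h)=\pi^*\cO_\PP(1)$. The projection $\tilde{\rhop}\colon\II\to\check\PP$ is a $\PP^{\ell-1}$-bundle whose fibre over $[\xi]$ is $\PP(\ker\xi)\subset\PP(V)$, and $\pi^*\cO_\PP(1)$ restricts to $\cO(1)$ on each such fibre. Hence the first step is the computation
\[
\tilde{\rhop}_*\cO_\II(h)\;\cong\;\cT_{\check\PP}(-1),\qquad R^{>0}\tilde{\rhop}_*\cO_\II(h)=0,
\]
the fibre of the pushforward over $[\xi]$ being $H^0(\PP(\ker\xi),\cO(1))=(\ker\xi)^*=V^*/\langle\xi\rangle$, i.e. the tautological quotient bundle, which the Euler sequence identifies with $\cT_{\check\PP}(-1)$. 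In particular $\TD(-1)$ sits naturally inside $\tilde{\rhop}_*\cO_\II(h)=\cT_{\check\PP}(-1)$: it is, by definition, the kernel of the $(-1)$-twist of the contraction $\cT_{\check\PP}\to\cO_D(d)$, $\theta\mapsto\theta(f)\bmod f$, where $f$ is a degree-$d$ equation of $D$; I shall write $df\colon\cT_{\check\PP}(-1)\to\cO_D(d-1)$ for this twisted map.

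Next I would apply $\tilde{\rhop}_*$ to the ideal sequence
\[
0\to\cI_{\WW_X/\II}(h)\to\cO_\II(h)\to\cO_{\WW_X}(h)\to 0 .
\]
By left-exactness of $\tilde{\rhop}_*$, together with the first step, this identifies $\tilde{\rhop}_*\bigl(\cI_{\WW_X/\II}(h)\bigr)$ with the kernel of the restriction map $\phi\colon\cT_{\check\PP}(-1)=\tilde{\rhop}_*\cO_\II(h)\to\tilde{\rhop}_*\cO_{\WW_X}(h)$. Everything then reduces to proving that $\phi$ and $df$ cut out the same subsheaf of $\cT_{\check\PP}(-1)$.

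The geometric input is the identification of $\phi$ with the contraction, and this is where the biduality description of $\WW_X$ enters. Over $[\xi]\notin D$ the fibre of $\WW_X$ is empty, so $\phi$ vanishes there, as does $df$. Over a smooth point $[\xi]\in D$, the conormal variety $\WW_X$ is the closure of the graph of the Gauss map $\gamma_D\colon[\xi]\mapsto[\nabla f(\xi)]$, so the unique point of $\WW_X$ in the fibre $\PP(\ker\xi)$ is $[\nabla f(\xi)]\in X$. Tracing a local section $\eta\in V^*/\langle\xi\rangle$ of $\cT_{\check\PP}(-1)=\tilde{\rhop}_*\cO_\II(h)$ through the isomorphism of the first step, the value of the corresponding section of $\cO_{\WW_X}(h)$ at this point is the linear form $\eta$ evaluated at $\nabla f(\xi)$, namely $\sum_i\eta_i\,\partial_i f(\xi)=\theta(f)([\xi])$, where $\theta$ denotes the derivation attached to $\eta$. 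Thus over the smooth locus $D_{\mathrm{sm}}$, on which $\tilde{\rhop}\colon\WW_X\to D$ is an isomorphism, $\phi$ is exactly the contraction $\theta\mapsto\theta(f)$, which is the restriction of $df$.

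Finally I would conclude that $\ker\phi=\ker(df)=\TD(-1)$ as subsheaves of $\cT_{\check\PP}(-1)$, by a density argument that avoids any normality or reflexivity hypothesis on $D$. Indeed, for a local section $\theta$, the element $df(\theta)$ vanishes iff $\theta(f)\in(f)$; on the other hand $\phi(\theta)$ is a section of the line bundle $\cO_{\WW_X}(h)$ on the (reduced) scheme $\WW_X$, hence vanishes iff it vanishes on the dense open $\tilde{\rhop}^{-1}(D_{\mathrm{sm}})$, where by the previous paragraph it equals the function $[\xi]\mapsto\theta(f)([\xi])$; since $D_{\mathrm{sm}}$ is dense in each component of $D$ and $D$ is reduced, this happens iff $\theta(f)\in(f)$. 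Hence $\phi(\theta)=0\iff df(\theta)=0$, so $\tilde{\rhop}_*\bigl(\cI_{\WW_X/\II}(h)\bigr)=\ker\phi=\TD(-1)$, as required. I expect the main obstacle to be the precise matching in the third paragraph: carrying the canonical isomorphism $\tilde{\rhop}_*\cO_\II(h)\cong\cT_{\check\PP}(-1)$ through restriction to $\WW_X$ and verifying that it intertwines $\phi$ with $\theta\mapsto\theta(f)$, rather than with a twisted or transposed variant, requires careful bookkeeping of the Euler-sequence identifications and of the twists, together with the biduality description of $\WW_X$ as the graph closure of the Gauss map; once this is in place, the density argument closing the proof is clean.
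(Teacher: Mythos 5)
Your argument is correct, but it replaces the paper's key input at the decisive step. Both proofs share the same skeleton: push the ideal sequence $0\to\cI_{\WW_X/\II}(h)\to\cO_\II(h)\to\cO_{\WW_X}(h)\to0$ forward along $\tilde{\rhop}$ and use the Euler-sequence identification $\tilde{\rhop}_*\cO_\II(h)\simeq\cT_{\check\PP}(-1)$, so that $\tilde{\rhop}_*(\cI_{\WW_X/\II}(h))$ becomes the kernel of a map $\phi$ out of $\cT_{\check\PP}(-1)$. At that point the paper invokes Nobile's theorem --- $\WW_X$ is the blow-up of $D$ along its Jacobian ideal --- which supplies the injection $\cJ_D(d-1)\hookrightarrow\rhop_*(\cO_{\WW_X}(h))$ and, via the chain $\WW_X\subset\PP(\cJ_D(d-1))\subset\PP(\cT_{\check\PP}(-1))=\II$, the commutativity of the comparison diagram; the snake lemma then yields the isomorphism. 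You instead compute $\ker\phi$ directly: by biduality $\WW_X$ is the graph closure of the Gauss map of $D$, over $D_{\sm}$ the map $\phi$ is the evaluation $\theta\mapsto\theta(f)$, and since $\WW_X$ is reduced a section of $\cO_{\WW_X}(h)$ vanishes iff it vanishes over the smooth locus; combined with square-freeness of $f$ this gives $\ker\phi=\ker\bigl(\theta\mapsto\theta(f)\bmod f\bigr)=\TD(-1)$. What your route buys is self-containedness: biduality plus reducedness replace Nobile's blow-up theorem, and you never need injectivity of $\cJ_D(d-1)\to\rhop_*(\pip^*\cO_X(1))$. What the paper's route buys are structural by-products it reuses afterwards: the blow-up description, the identification $\cO_{\WW_X}(h)\simeq\pip^*\cO_X(1)$, and the explicit comparison with $\cJ_D(d-1)$ that underlies Lemma \ref{lemma-vanishingFormal} and Theorem \ref{thm_resolution_pushforward}. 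The bookkeeping you flag (that the Euler identification intertwines $\phi$ with contraction against $\nabla F$ rather than a twisted variant) is genuinely the crux, but it is the same commutativity the paper settles by the inclusion chain \eqref{norm-in-flag}, so you are not worse off there. One point to tighten: the density you actually need is that of $\tilde{\rhop}^{-1}(D_{\sm})\cap\WW_X$ inside $\WW_X$, not merely of $D_{\sm}$ in $D$; it holds because each irreducible component of $\WW_X$ dominates a component of $D=\check X$, which is a hypersurface by hypothesis --- this is precisely where that hypothesis enters your proof, and without it a component of $\WW_X$ lying over a higher-codimension piece of $\check X$ would escape your smooth-locus computation and the conclusion $\ker\phi=\TD(-1)$ would fail.
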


This provides a wide generalisation of the approach developed in
\cite{FaenziValles, faenzi-matei-valles}, where the sheaf of logarithmic derivations of a hyperplane arrangement was computed as Fourier-Mukai transform of $\cI_Z(1)$, via the point-hyperplane incidence correspondence $\II$, where $Z$ is the set of points in the dual space $\check \PP$ corresponding to the arrangement and 
$\cI_Z$ is the ideal sheaf of $Z$ in $\check \PP$.
This result was in some sense at the origin of the study of unexpected curves, carried on by many authors starting with \cite{cook-harbourne-migliore-nagel}.

Our main application is to the minimal resolution of logarithmic derivations modules of adjoint discriminants.
Let $G$ be a simple linear algebraic group over $\bk$ and set $\fg$ for the Lie algebra of $G$. Let $n$ be the rank of $\fg$ and $e_1 \le \cdots \le e_n$ be the exponents of $\fg$.
Let $\bU=\bk[\fg]$ and, denoting by $(x_1,\ldots,x_\ell)$ the coordinates for $\bk[\fg]$, we write $\Der_\bU = \bU\partial_{x_1} \oplus \cdots \oplus \bU\partial_{x_\ell}$. 
The fundamental divisor studied in this paper is $D=\VV(\Delta)$, where $\Delta$ is the adjoint discriminant, i.e., the discriminant for the adjoint action of $G$ on $\fg$. 
We are mainly interested in the module of logarithmic derivations $\Der_\bU(-\log(\Delta))$ defined by the adjoint discriminant. 
Let us first state our main result for $G$ of type $A$, $D$, $E$, i.e., when $\fg$ is \textit{of simply laced type}.
To do this, we need to introduce the following two main ingredients. The first one is the natural morphism:
$\bad : \fg\otimes \bU(-1) \to \fg\otimes \bU$ 
given by the dual of the Lie bracket on the $1$-degree parts $(\fg\otimes \bU(-1))_1 = \fg \to \fg \otimes \fg = (\fg \otimes \bU)_1$ and extended by $\bU$-linearity (here we identify $\fg$ and $\fg^\vee$ via the Killing form).
The $\bU$-modules $\ker(\bad)$ and $\bA = \IM(\bad)$ are $G$-equivariant; moreover, it will turn out that $\ker(\bad)$ is a free module, more precisely $\ker(\bad) \simeq \bigoplus_{i=1}^n \bU(-e_i-1)$.

The second ingredient is the \textit{braid (or Weyl) arrangement} of type $G$ obtained by intersecting $\Delta$ with a Cartan subalgebra $\fh \subset \fg$. The Weyl group $W$ of $G$ acts on $\fh$ by reflections about the hyperplanes $H_\alpha$, where $\alpha$ ranges in the set $\Phi^+$ of positive roots of the root system $\Phi$ of $\fg$.

A fundamental fact is that $\Delta|_\fh = \delta$, with $\sqrt{\delta}= \Pi_{\alpha \in \Phi^+} \delta_\alpha$, where $\delta_\alpha$ is a linear equation defining the hyperplane $H_\alpha$.
The \textit{remarkable formula of Kostant, Macdonald,
Shapiro and Steinberg} (KMSS), referring to work of Shapiro and \cite{kostant:principal, macdonald, steinberg:finite}, asserts that the braid arrangement is free with generators sitting in degrees equal to the exponents. This is related to the topology of the complement in $\mathbb{C}^n$ of the arrangement, which is a $K(\pi,1)$ space of the corresponding Artin-Tits group of type $G$, see \cite{fox-neuwirth}.
Terao showed in \cite{Terao} that the Poincaré polynomial of the complement of a free central affine arrangement with generators of degree $(d_1,\ldots,d_n)$ equals $\Pi_{i=1}^n(1+td_i)$, recovering a formula of Orlik and Solomon (see \cite{orlik-solomon:unitary}). 
A free basis of $\Der_\bS(-\log(\delta))$ is given by the \textit{Saito matrix}, which can be explicitly described as follows (see e.g. \cite{Yos, saito:uniformization}). Writing $(f_1,\ldots,f_n) \in \bS^n$ for a free basis of $\bS^W$, with $\deg(f_i)=e_i$ for $i=1,\ldots,n$, one has
 \[
 \Der_\bS(-\log(\delta)) \simeq \mu_1\bS \oplus \cdots  \oplus \mu_n\bS, \:\: \mbox{ with: } \mu_i=\nabla(f_i).
 \]

Going back to the full Lie algebra $\fg$, the $W$-invariants $f_1,\ldots,f_n$ lift to $G$-invariants $F_1,\ldots,F_n \in \bU$ via the Chevalley restriction theorem, which implies that $\bU^G \simeq \bS^W$. 
We get a new Saito-type matrix
\[
 \nu=(\nu_1,\ldots,\nu_n) : \bigoplus_{i=1}^n \bU(-e_i) \longrightarrow \fg\otimes \bU, \:\: \mbox{ with: }  
\nu_i=\nabla(F_i).
\]
Our result brings these objects together in the following statement.

\begin{maintheorem}\label{main simply laced}
Let $G$ be a simple linear algebraic group over $\bk$ and let $\fg$ be its Lie algebra. Assume $\fg$ is of simply laced type, let $e_1,\ldots,e_n$ be the exponents of $\fg$ and let $\Delta$ be the adjoint discriminant. Then:
\[
\Der_\bU(-\log(\Delta)) \simeq \bigoplus_{i=1}^n \bU(-e_i) \oplus \bA,
\]
where the module $\bA$ is the image of $\bad : \fg\otimes \bU(-1) \to \fg\otimes \bU $ and fits into:
\[
0\to \bigoplus_{i=1}^n \bU(-e_i-1) \xrightarrow{\nu} \fg\otimes \bU(-1) \to \bA \to 0.\]
\end{maintheorem}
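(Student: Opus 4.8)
The plan is to realise both summands explicitly inside $\Der_\bU(-\log(D))$, to check that they are in direct sum by a pointwise computation at a regular semisimple element, and finally to force the inclusion to be an equality by a codimension-one comparison. Throughout I identify $\Der_\bU\simeq\fg\otimes\bU$ via the constant vector fields, so that $\Der_\bU(-\log(D))=\{\theta\in\fg\otimes\bU:\theta(\Delta)\in(\Delta)\}$, and I read $\bad(v)$ as the vector field $x\mapsto[v(x),x]$, whose kernel is $\{v:v(x)\in\mathfrak{z}(x)\ \forall x\}$.

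First I would check the two containments. For $\bA$ this is immediate: since $\Delta$ is $G$-invariant, $\nabla\Delta(x)\perp[\fg,x]$ for all $x$, hence $\bad(v)(\Delta)=\langle\nabla\Delta,[\,v(x),x]\rangle\equiv 0$, so in fact $\bA$ annihilates $\Delta$. For the gradients I would argue that $g_i:=\nabla(F_i)(\Delta)=\langle\nabla F_i,\nabla\Delta\rangle$ is a $G$-invariant function, restrict it to the Cartan subalgebra $\fh$, and use that the restriction of an invariant gradient to $\fh$ is the Saito field $\mu_i=\nabla(f_i)$. The KMSS/Saito freeness of the braid arrangement gives $\mu_i(\delta)\in(\delta)$, so $g_i|_\fh\in(\delta)=(\Delta|_\fh)$; writing $g_i|_\fh=\delta\,q$ with $q\in\bS^W$, lifting $q$ to $Q\in\bU^G$ via Chevalley, and noting that the invariant $g_i-\Delta\,Q$ vanishes on $\fh$, I conclude $g_i-\Delta\,Q=0$, i.e. $\nabla(F_i)(\Delta)\in(\Delta)$. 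Thus $M:=\bigoplus_i\bU\,\nabla(F_i)\oplus\bA\subseteq\Der_\bU(-\log(D))$.

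Next I would establish that the sum defining $M$ is direct and that $M$ has full rank $\dim\fg$. Evaluating at a regular semisimple $x$ gives the orthogonal splitting $\fg=\mathfrak{z}(x)\oplus[\fg,x]$, with $\bA$ specialising into $[\fg,x]$ and each $\nabla F_i(x)\in\mathfrak{z}(x)$; since the $\nabla F_i(x)$ form a basis of $\mathfrak{z}(x)$ at regular points, any syzygy between the two families must have vanishing coefficients, which proves directness, freeness of $\bigoplus\bU\nabla(F_i)$, and that $\rank M=n+(\dim\fg-n)=\dim\fg=\rank\Der_\bU(-\log(D))$. At this stage the displayed resolution of $\bA$ is exactly the given identification $\ker(\bad)\simeq\bigoplus_i\bU(-e_i-1)$ together with $\bA=\IM(\bad)$, so $\bA$ has projective dimension one; using that $\bad$ is skew-adjoint for the Killing form (so that $\bad^\vee=-\bad$ up to twist and $\bA\simeq\bA^\vee$ up to a shift), I record that $M$ is reflexive.

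It remains to promote the inclusion $M\subseteq\Der_\bU(-\log(D))$ to an equality, and this is where the real difficulty lies. Both modules are reflexive of full rank, so it suffices to compare them in codimension one; away from $D$ both equal $\fg\otimes\bU$, so the only primes to examine are the generic points of the components of $D$. There $D$ is smooth and $\Der_\bU(-\log(D))$ has colength one in $\fg\otimes\bU$, so I must prove that $M$ also has colength one, equivalently that the maximal minors of the matrix $[\,\nabla F_1,\dots,\nabla F_n\mid\bad(e_1),\dots,\bad(e_{\dim\fg})\,]$ vanish to order exactly one along $D$. The generic point of $D$ is a regular element whose semisimple part lies on a single wall $H_\alpha$ and whose nilpotent part is regular inside the corresponding subalgebra $\fsl_2^{(\alpha)}$; the transverse slice is the rank-three quadric model of $\fsl_2$. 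The hard part is the local $\fsl_2$-computation showing that here the centraliser direction $e_\alpha$ is absorbed into $[\fg,x]$ while the gradient span drops by exactly one, so that the collision of $\mathfrak{z}(x)$ with the orbit directions is as non-degenerate as possible and the minors acquire a simple zero. Granting this transverse analysis, $M$ has colength one at every generic point of $D$, the reflexive modules $M$ and $\Der_\bU(-\log(D))$ agree in codimension one, hence coincide, and the stated short exact sequence for $\bA$ is precisely the one recorded above.
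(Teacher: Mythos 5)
Your two containments are correct and essentially reproduce, in compressed form, the paper's invariant-theoretic machinery ($\bad(v)(\Delta)=0$ by $G$-invariance is the paper's Lemma \ref{lemma-kerad} in disguise; your restriction-to-$\fh$, Saito--Terao, Chevalley-lifting argument for $\nabla F_i(\Delta)\in(\Delta)$ is the content of the morphisms $\pi_G,\pi_W,\pi$ in \S\ref{section: non invariant}), and the evaluation at a regular semisimple point does give directness of the sum and the rank count. But the proof has two genuine gaps. First, you justify the exact sequence $0\to\bigoplus_i\bU(-e_i-1)\to\fg\otimes\bU(-1)\to\bA\to 0$ by appeal to ``the given identification'' $\ker(\bad)\simeq\bigoplus_i\bU(-e_i-1)$ --- but that identification is part of the statement being proved, so as written this is circular; your regular-semisimple computation only shows the $\nu_i$ are $\bU$-independent elements of $\ker(\bad)$, not that they generate it. (This piece is patchable: it is Kostant's freeness theorem for the centraliser module, using that the $dF_i(x)$ span $\mathfrak{z}(x)$ exactly at regular $x$ and that the irregular locus has codimension $\geq 3$ --- but you must invoke it; the paper instead deduces it from the snake-lemma diagram at the very end.) Second, and decisively, the equality $M=\Der_\bU(-\log(D))$ rests on (a) reflexivity of $M$, equivalently of $\bA$, and (b) the colength-one transverse computation at the generic point of $D$, which you explicitly defer (``granting this transverse analysis''). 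Neither is established: skew-adjointness of $\bad$ does not make its image reflexive (reflexivity is a second-syzygy condition, not stable under taking images of skew maps; in the paper $\bA$ is reflexive only \emph{a posteriori}, as a direct summand of the reflexive module $\Der_\bU(-\log(D))$), and the $\fsl_2$-slice analysis is the entire analytic content of your approach.

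Here is why the omission is essential rather than cosmetic: every ingredient you actually use --- $G$-invariance of $\Delta$, the $\nabla F_i$ being logarithmic, Kostant-type freeness of $\ker(\bad)$, skew-adjointness of $\bad$, and the long-root $\fsl_2$ transverse model at the generic point of $D$ --- is available verbatim for non-simply laced $\fg$. So if your granted steps (a) and (b) were correct as sketched, the identical argument would prove $\Der_\bU(-\log(D))\simeq\bigoplus_{i=1}^n\bU(-e_i)\oplus\bA$ in types $B_n$, $C_n$, $F_4$, $G_2$ as well, contradicting Theorem \ref{main not simply laced}: already for $C_n$, $n\geq 2$, the logarithmic module requires an extra $\hat\fg\otimes\bU(-1)$ worth of degree-one generators, so $M\subsetneq\Der_\bU(-\log(D))$ there. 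Hence at least one of (a), (b) must fail in the non-simply laced case, and your sketch contains no mechanism distinguishing the two situations --- the place where simple-lacedness enters is exactly the part you did not write down. The paper's route is entirely different and circumvents this: it obtains the full $G$-equivariant graded free resolution of $\Der_\bU(-\log(D))$ from Theorem \ref{main:direct image} together with the geometric method and case-by-case Bott--Borel--Weil computations on the adjoint variety (Theorem \ref{thm_res}, where simple-lacedness manifests as the absence of the quasi-minuscule cohomology terms), identifies the degree-one block of the presentation with the Lie bracket via the contact structure (Proposition \ref{prop-LieB}), proves minimality by matching the free summand with the $G$-invariant derivations, and only then splits off $\bA$ by the snake lemma using $\nu_i\in\ker(\bad)$. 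If you want to salvage your more elementary strategy, the missing deliverables are precisely: a proof that $\bA$ is reflexive, and the order-of-vanishing computation of the maximal minors along $D$ --- both carried out in a way that visibly uses the simply laced hypothesis.
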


In particular, the module $\Der_\bU(-\log(\Delta))$ is of projective dimension one, with a summand presented by a \textit{rectangular Saito matrix} corresponding to the image of the Lie bracket, and a free summand with the same exponents as the corresponding Weyl arrangement.

Let us next discuss simple Lie algebras which are not of simply laced type. 
In this case, the connection with the KMSS formula is a bit different, as the adjoint discriminant captures long positive roots, rather than all of $\Phi^+$, though the discussion might be related to ideal arrangements as in \cite{abe-barakat-cuntz-hoge-terao}.
Anyway, we give a minimal graded free equivariant resolution of the derivation module also in this case and this module turns out to have again projective dimension one. Let us denote by $s$ the number of long simple roots of $\fg$. The new ingredient is the quasi-minuscule representation $\hat \fg$ of $\fg$, namely, the irreducible representation whose highest weight is the highest short root (as opposed to the adjoint representation, whose highest weight is the highest long root).
Let $j$ be the smallest short exponent of $G$, see \cite{kostant:lie-group}. Concretely, we have $j=1,2,3,n-1$ when $G$ is of type, respectively, $C_n$, $G_2$, $F_4$ and $B_n$.
Let $s$ be the number of long simple roots. 
Let us state our last main result. 

\begin{maintheorem} \label{main not simply laced} Assume $G$ is a simple linear algebraic group over $\bk$, not of simply laced type, $\Delta$ its adjoint discriminant. Then the module  $\Der_\bU(-\log(\Delta))$
admits a $G$-equivariant graded free resolution of the form:
\[
0 \to \begin{array}{c}
\bigoplus_{i=1}^s \bU(-e_i-1) \\ \oplus \\ \hat \fg \otimes \bU(-j-1)
\end{array}\to
\begin{array}{c}
 \bigoplus_{i=1}^s \bU(-e_i) \\
 \oplus \\
   \fg \otimes \bU(-1) \oplus \hat \fg \otimes \bU(-j)
 \end{array}
 \to \Der_\bU(-\log(\Delta)) \to 0.
\]
\end{maintheorem}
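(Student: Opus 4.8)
The plan is to read off the resolution from Main Theorem \ref{main:direct image} by applying the geometric (Kempf collapsing) method to the conormal variety of the adjoint variety. Since $D$ is the adjoint discriminant, its projective dual $X$ is the closed orbit $\PP(\cO_{\min}) \subset \PP = \PP(\fg)$, a homogeneous contact Fano variety $X = G/P$ with $P$ the stabiliser of the highest root line and $\fg = \fg_{-2}\oplus\fg_{-1}\oplus\fg_0\oplus\fg_1\oplus\fg_2$ the associated contact grading. As $X$ is smooth and homogeneous, $\WW_X$ is the projectivisation of the conormal bundle $N^*_{X/\PP}(1)$, sitting as a subbundle of the restricted incidence $\PP_X(\cS)$ with $\cS = \Omega^1_\PP(1)|_X$, the quotient being $\Omega^1_X(1)$. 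By Main Theorem \ref{main:direct image} the module $\Der_\bU(-\log(D))$ is recovered as $\bigoplus_m H^0(\check\PP, \tilde\rhop_*(\cI_{\WW_X/\II}(h))(m{+}1))$, so I would resolve $\cI_{\WW_X}$ by the Koszul complex of the subbundle inclusion $N^*_{X/\PP}(1)\subset\cS$ and push forward along $\tilde\rhop$.

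The key step is this pushforward. Each Koszul term is $\wedge^i\cT_X$ twisted by powers of $\cO_X(1)$ and of the relative tautological bundle; applying $R\tilde\rhop_*$ replaces these by their cohomology over the fibre $X$, which is computed by the Borel--Weil--Bott theorem for the homogeneous bundles attached to the $P$-modules coming from the contact grading, while the $\check\PP$-twists survive and become the module twists $\bU(-k)$. I expect the surviving Bott cohomologies to be exactly: the trivial representation in the exponent degrees $e_1,\dots,e_s$ (the $G$-invariant directions, matching the Saito/KMSS picture of the long-root part restricted to $\fh$), the adjoint representation $\fg$ in degree $1$ (the fundamental vector fields, i.e.\ $\IM(\bad)$), and one further irreducible in degree $j$.

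To identify this last summand I would use the folding realisation $\fg = \tilde\fg^{\sigma}$, where $\tilde\fg$ is a simply laced Lie algebra and $\sigma$ a diagram automorphism ($A_{2n-1}\!\to\! C_n$, $D_{n+1}\!\to\! B_n$, $E_6\!\to\! F_4$, $D_4\!\to\! G_2$). Under $\sigma$ one has $\tilde\fg = \fg\oplus\hat\fg$ as $\fg$-modules, and a uniform dimension count ($\dim\tilde\fg-\dim\fg$ equals $2n^2-n-1$, $2n+1$, $26$, $7$ respectively) identifies the complement $\hat\fg$ with the quasi-minuscule representation; this is precisely the extra summand, which degenerates to nothing in the simply laced case of Main Theorem \ref{main simply laced} (where $\sigma$ is trivial). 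Equivariance then forces the differentials of the length-one complex: the Saito block $\nu$ from $\bigoplus_i\bU(-e_i-1)$ to $\fg\otimes\bU(-1)$, the bracket map on the $\fg\otimes\bU(-1)$ part, and the $\fg$-action $\fg\otimes\hat\fg\to\hat\fg$ on the quasi-minuscule block $\hat\fg\otimes\bU(-j-1)\to\hat\fg\otimes\bU(-j)$. A rank count confirms the shape: the middle and left free modules have ranks $s+\dim\fg+\dim\hat\fg$ and $s+\dim\hat\fg$, whose difference $\dim\fg$ is exactly the rank of the derivation module.

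The main obstacle is the Borel--Weil--Bott computation itself in the non-simply laced types: one must verify, uniformly through the contact/quasi-minuscule structure or else case by case, that precisely these irreducibles survive, that all intermediate cohomology vanishes so that the hypercohomology spectral sequence degenerates and the pushforward complex has length one with no spurious terms, and that the quasi-minuscule representation appears exactly in degree $j = 1,\,n,\,2,\,3$ for $C_n,\,B_n,\,G_2,\,F_4$. A secondary but necessary point is to check that $\tilde\rhop\colon\WW_X\to D$ is birational---equivalently that the adjoint discriminant is reduced and irreducible and $X$ is non-defective---so that Main Theorem \ref{main:direct image} applies on the nose, and finally to confirm minimality of the resulting equivariant free resolution.
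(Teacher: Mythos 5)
Your outline follows the paper's actual route — Theorem \ref{main:direct image}, the Koszul/Weyman pushforward of Theorem \ref{thm_resolution_pushforward}, and Bott--Borel--Weil on the contact homogeneous bundles — and you predict the correct shape of the answer, including the values $j=1,n,2,3$. But the step you set aside as ``the main obstacle'' \emph{is} the proof: the case-by-case cohomology computations (Propositions \ref{lem_coh_An}, \ref{res_jac_bn}, \ref{res_jac_dn}, \ref{res_jac_g2}, \ref{res_jac_e6}, using branching from $\GL$ to $\O$ to $\SO$ in types $B_n$, $D_n$, and scripted BBW for the exceptional types) occupy the bulk of the paper's argument, and nothing in your proposal replaces them. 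The folding heuristic you offer is not a substitute: it cannot explain why $\hat\fg$ survives the pushforward, nor in which cohomological degree, hence cannot produce the twist $j$; and it is false as stated for $G_2$, where the fixed-point decomposition of $\fso_8$ under triality is $\fg_2\oplus \hat\fg\oplus\hat\fg$ (the complement has dimension $28-14=14$, not $7$), so even the identification ``$\tilde\fg=\fg\oplus\hat\fg$'' fails there. The paper identifies $\hat\fg$ directly as the BBW output ($V_{\varpi_1}$ in degree $n-1$ on $\wedge^n\cF^\vee\otimes\cL$ for $B_n$, $\wedge^{\langle 2\rangle}\CC^{2n}$ for $C_n$, etc.), with no appeal to folding. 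You also omit the pivot that makes these computations organizable, Lemma \ref{lem_iso_xi} ($\hat\cT_X\otimes\cL^\vee\cong\hat\Omega_X$, via the contact form), which converts the Koszul terms $\wedge^p\hat\cT_X(-1)$ into $\wedge^p\hat\Omega_X$ and then, through \eqref{eq_omegap_F}, into the tractable bundles $\wedge^p\cF^\vee$ and $\wedge^p\cF^\vee\otimes\cL$.

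A second genuine gap: ``equivariance then forces the differentials'' would fail as an argument. The raw Weyman complex is not the length-one complex of the theorem; it contains the full primitive-plus-non-primitive cohomology $H^p(\Omega^p_X)$ (of dimension growing like $\lfloor p/2\rfloor+1$) in a long range of degrees, and one must prove that the maps between consecutive terms have \emph{maximal rank} so that all but the exponent-degree pieces cancel. Equivariance alone does not give this (a $G$-equivariant map between trivial isotypic blocks can perfectly well vanish); the paper needs Lemma \ref{lem_max_rank}, identifying the connecting morphisms $\eta_i$ with multiplication by the hyperplane class via the contact form $d\theta$, so that maximal rank follows from the Hard Lefschetz theorem, after which the cancellation yields Theorem \ref{thm_res} and, splitting off $\fg\otimes\cO\simeq\hat{\cT}_{\check\PP}(-1)$ against \eqref{eq_affine_tangent_jac}, the stated resolution of $\Der_\bU(-\log(D))$. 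Two smaller points: birationality of $\rho$ is immediate in the paper from biduality once $D$ is a hypersurface (its degree is the number of long roots), so no defectivity analysis is needed; and the theorem in the non-simply-laced case asserts only existence of the equivariant resolution, not minimality, so your final minimality check is not required (the paper proves minimality only in the simply laced case, Theorem \ref{thm_minimality}, by a separate invariant-theoretic argument).
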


The strategy of the proof of Theorems \ref{main simply laced} and \ref{main not simply laced} goes as follows. We rely on Theorem \ref{main:direct image} and use the geometric technique mentioned above to compute $\tilde{\rhop}_*(\cI_{\WW_X/\II}(h))$ via the pushforward of a Koszul complex on $X\times \check{\PP}$. This translates into computing the cohomology of twisted exterior powers of the affine tangent sheaf of $X$. Here, we use the geometry of adjoint varieties and notably the contact structure to identify the affine tangent bundle with the affine cotangent bundle on $X$, up to a modification of twists. The computation of the cohomology of these bundles turns out to be affordable, on a case-by-case basis, through a quite technical use of the Bott-Borel-Weil theorem and branching rules for classical groups. However, rather surprisingly, the final result can be stated in a uniform and concise way for all groups, regardless of whether $G$ is of simply laced type or not (see Theorem \ref{thm_res}). Of course, a unified proof would be highly desirable. As a final step, for simply laced types, we connect our discussion to the theory of free arrangements to prove that the resolution is minimal and that it induces the decomposition stated in Theorem \ref{main simply laced}.
\medskip

The paper is organised as follows. In \S \ref{section : pushforward} we develop our study of logarithmic tangent sheaves via pushforward from the normal variety and prove Theorem \ref{main:direct image}.
In \S \ref{sec-Weymanmethod} we start working out resolutions of the Jacobian ideal and of the logarithmic tangent sheaf, obtained using the Cayley method and pushforward in the spirit of \cite{gelfand-kapranov-zelevinsky}.
We recall several results on the normalisation of dual varieties, particularly for Fano varieties, (see Proposition~\ref{normalisation}) and we describe the relationship between the geometric method and the Jacobian ideal (see Theorem~\ref{thm_resolution_pushforward}).
In \S \ref{section : adjoint} we review some of the relevant geometry of adjoint varieties and their contact structure. Moreover, we provide a minimal graded free resolution of the normalisation of adjoint discriminants (see Theorem \ref{thm_res_structure}).
In \S \ref{section: adjoint discriminants}, we compute a resolution of the module of logarithmic derivations of adjoint discriminants (see Theorem \ref{thm_res}) both for the simply laced and the non-simply laced situation. We also recover some results of
\cite{faenzi-marchesi} regarding  symmetric determinants in terms of the adjoint orbits in the case $C_n$.
Finally, in \S \ref{section: non invariant}, using Terao's results on hyperplane arrangements, we explicitly construct the module of $G$-invariant logarithmic derivations and deduce Theorem \ref{main simply laced}.

We would like to thank D. Fratila and M. Yoshinaga for fruitful discussions.

\section{Jacobian ideal and logarithmic differential via pushforward}
\label{section : pushforward}

The main goal of this section is to introduce the use of projective duality in the analysis of the Jacobian ideal of a hypersurface $D$ in the projective space, as well as the module of logarithmic derivations along $D$, leading to Theorem \ref{thm-pushforwardlog}, which is essentially Theorem \ref{main:direct image} from the introduction.
We first set up some background material in \S \ref{subsection:background}, where we review different notions of affine/projective version of logarithmic derivations modules and sheaves, as well as some conventions about projective duality.
Then, in \S \ref{subsection:dual} we state and prove Theorem \ref{thm-pushforwardlog}, as well as some consequences and connections with previous results.

\subsection{Background and notation}

\label{subsection:background}

We work over an algebraically closed field $\bk$ of characteristic zero. Let $\ell \ge 1$ be an integer.
Let us denote by $\PP=\PP^{\ell-1}$ the $\ell-1$-dimensional projective space parametrising hyperplanes in $\bk^\ell$ and by $\check{\PP}=\check{\PP}^{\ell-1}$ the dual projective space parametrising hyperplanes of $\PP$.

\subsubsection{Logarithmic derivations}

Let $\bU=\bk[x_1,\ldots,x_\ell]$ be the coordinate ring of $\check \PP^{\ell-1}$. We define the module of $\bU$-derivations as the free $\bU$-module of rank $\ell$:
\[
\Der_\bU = \left\{\sum_{i=0}^\ell \theta_i \partial_i \mid \theta_i \in \bU\right\}, \qquad \mbox {with:} \qquad \partial_i = \frac {\partial}{\partial x_i}, \qquad \text{for $i \in \{0,\ldots,\ell\}.$}
\]

\begin{definition}
Let $F \in \bU$ be a non-zero square-free homogeneous polynomial of degree $d$. Then we set:
\begin{align*}
\Der_\bU(-\log (F))& = \{\theta \in \Der_\bU \mid \theta(F) \in (F)\}, \\
\Der_\bU(-\log (F))_0&= \{\theta \in \Der_\bU \mid \theta(F) =0 \}.
\end{align*}
The $\bU$-module of \textit{logarithmic derivations} $\Der_\bU(-\log (F))$ and its submodule
$\Der_\bU(-\log (F))_0$ are $\ZZ$-graded, respectively of rank $\ell$ and $\ell$. 
\end{definition}

The Euler derivation $\epsilon = x_1\partial_0+\cdots+x_\ell \partial_\ell$ provides a splitting:
\[
\Der_\bU(-\log (F)) = \Der_\bU(-\log (F))_0 \oplus \bU \epsilon.
\]

\subsubsection{Logarithmic tangent sheaves}

For this part we mainly refer to \cite[\S 3.4.4]{sernesi:deformations} but, for instance,
useful properties of the logarithmic tangent sheaf can also be found in \cite{dolgachev:logarithmic}.
Let $Y$ be a reduced projective $\bk$-scheme and write $\cT_Y$ for the tangent sheaf of $Y$.
Assume now that $Y$ lies in $\PP=\PP^{\ell-1}$ with normal sheaf $\cN_{Y/\PP}$ and consider the composition:
\[
\nabla : \cT_{\PP} \to \cT_{\PP}|_Y \to \cN_{Y/\PP}.
\]

\begin{definition}
The sheaf of \textit{logarithmic differentials} $\cT_\PP\langle Y \rangle$ or \textit{logarithmic tangent sheaf} is the kernel of $\nabla$ and the equisingular normal sheaf $\cN'_{Y/\PP}$ is the image of $\nabla$, so we have an exact sequence:
\begin{equation} \label{TY}
0 \to \cT_\PP\langle Y \rangle \to \cT_{\PP} \to \cN'_{Y/\PP} \to 0.
\end{equation}
The \textit{equisingular normal sheaf} $\cN'_{Y/\PP}$ is supported on $Y$ and its rank along an irreducible component of $Y$ is equal to the codimension  of this component. 
\end{definition}

If $Y$ is a reduced hypersurface $D \subset \check{\PP}$ defined by a single homogeneous equation $F \in \bU$ of degree $d$, then:
\[\cN_{D/\check \PP} \simeq \cO_D(d), \qquad
\cN'_{D/\check \PP} \simeq \cJ_D(d),\]
where $\cJ_D$ is the restricted Jacobian ideal sheaf, defined as the restriction to $D$ of the Jacobian ideal sheaf $\cJ$ generated by the partial derivatives of $F$, namely:
\[
\cJ(d) = \IM(\nabla(F))=\IM\left((\partial_0 F,\ldots,\partial_\ell F) : \cO_{\check \PP}^{\ell +1}(1) \to \cO_{\check \PP}(d)\right).
\]
Rewriting \eqref{TY}, we get the fundamental exact sequence:
\begin{equation}\label{diag-deglogtang}
0 \rightarrow \TD \rightarrow \cT_{\check{\PP}} \rightarrow \cJ_D(d) \rightarrow 0.
\end{equation}

This allows us to consider $\TD$ as the sheaf of Jacobian syzygies of $F$, namely the kernel of $\nabla(F)$, in view of the next exact diagram. Note that commutativity of the top right square follows from the Euler relation.
\begin{equation}\label{diag-logsyz}
\xymatrix@-1.5ex{
& & \cO_{\check{\PP}}(-1) \ar@{^(->}[d] \ar^-{\cdot d}[r] & \cO_{\check{\PP}}(-1) \ar@{^(->}^{\cdot F}[d]\\
0 \ar[r] &\TD(-1) \ar[r] \ar@{=}[d] & \cO_{\check{\PP}}^{\ell} \ar[r]^-{\nabla F} \ar@{->>}[d] & \cJ(d-1) \ar[r] \ar@{->>}[d] & 0 \\
0 \ar[r] & \TD(-1) \ar[r] & \cT_{\check{\PP}}(-1) \ar[r] & \cJ_D(d-1) \ar[r] & 0 \\
 }
\end{equation}

We have the fundamental relationships involving the sheafified module of logarithmic derivations and the graded module of logarithmic differentials, valid for a reduced hypersurface $D =\VV(F)\subset \check \PP$
\begin{align}
     \label{relation-T-Der-0}
\Der_\bU(-\log (F))_0 &\simeq \bigoplus_{t \in \ZZ} H^0(\TD(t-1)), && \widetilde{\Der_\bU(-\log (F))_0} \, \simeq \TD(-1), \\
     \label{relation-T-Der}
\Der_\bU(-\log (F)) &\simeq \bigoplus_{t \in \ZZ} H^0(\hat \cT_{\check \PP}\langle D \rangle(t-1)), && \widetilde{\Der_\bU(-\log (F))} \, \simeq \hat \cT_{\check \PP}\langle D \rangle (-1).
\end{align}

\subsubsection{The affine logarithmic tangent sheaf}

Let $Y$ be a reduced closed subscheme of $\PP$ and consider the induced hyperplane bundle $c_1(\cO_Y(1))$.
The first Chern class of $\cO_Y(1)$ provides a non-zero element of $H^1(Y,\Omega_Y)$ and, in turn, a non-splitting extension 
$$ 0\to \Omega_Y \to \hat{\Omega}_Y \to \cO_Y \to 0,$$
where the middle sheaf $\hat{\Omega}_Y$ defined by the sequence is called the \textit{affine cotangent sheaf} of $Y$.
Its dual is the \textit{affine tangent sheaf} $\hat \cT_Y$ of $Y$, fitting into:
\begin{equation} \label{affine tangent}
    0\to \cO_Y \to \hat{\cT}_Y \to \cT_Y \to 0.
\end{equation} 

Note that the definition of $\hat{\cT}_Y$ depends on the inclusion $Y\subset \PP$ and that the affine tangent sheaf of the projective space is $\hat{\cT}_\PP=V\otimes \cO_\PP(1) \cong \cO_\PP(1)^{\ell}$. Moreover, we have an exact sequence 
$$ 0\to \hat{\cT}_Y \to \hat{\cT}_\PP|_Y \to \cN_{Y/\PP},$$
whose image is again the equisingular normal sheaf $\cN'_{Y/\PP}$.
\begin{definition}
The \textit{affine sheaf of logarithmic differentials} 
$\hat \cT_\PP\langle Y \rangle$ of $Y$ is the kernel of the natural composition:
\[
\hat{\cT}_\PP \to \hat{\cT}_\PP|_Y \to \cN_{Y/\PP}. 
\]
We have exact sequences:
\begin{align} \label{hatTY}
&0 \to \hat \cT_\PP\langle Y \rangle \to \hat \cT_{\PP} \to \cN'_{Y/\PP} \to 0,\\
\nonumber &0 \to \cO_{\PP} \to \hat \cT_\PP\langle Y \rangle \to \cT_\PP\langle Y \rangle \to 0.
\end{align}

When $Y=D$ is a hypersurface of $\check \PP$, \eqref{hatTY} reads as follows:
    \begin{equation}\label{eq_affine_tangent_jac} 
    0\to \hat \cT_\PP\langle D \rangle \to \hat{\cT}_{\check{\PP}} \to \cJ_D(d) \to 0.
    \end{equation}
In terms of $\bU$-modules, we consider the affine cone $\hat D$ of the hypersurface $D \subset \check \PP$ defined by a homogeneous equation $F$. We let $J_D$ be the restricted Jacobian ideal of $F$, namely the ideal generated by the partial derivatives of $F$ inside $\bU/(F)$.
Then, using \eqref{relation-T-Der}, the sequence \eqref{eq_affine_tangent_jac} gives:
    \begin{equation}\label{eq_affine_tangent_jac_modules} 
    0\to \Der_\bU(-\log(F)) \to \Der_\bU \to J_D(d-1) \to 0.
    \end{equation}
\end{definition}

We will recover this sequence via pushforward in Corollary \ref{cor_affine_tanlog}.
Next, we observe a basic fact, that explains the relationship 
between two natural ways of defining the logarithmic tangent sheaf, namely as a quotient of the affine logarithmic tangent sheaf or, up to twist, as the sheafification of $\Der_\bU(-\log (F))_0$.

\begin{lemma}
The affine sheaf of logarithmic differentials of a hypersurface $D$ splits as $ \hat \cT_\PP\langle D \rangle \simeq  \cT_\PP\langle D \rangle \oplus \cO_{\PP}$.
\end{lemma}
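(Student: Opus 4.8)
The statement amounts to splitting the second short exact sequence of \eqref{hatTY}, namely
\[
0 \to \cO_{\PP} \to \hat \cT_\PP\langle D \rangle \xrightarrow{\;q\;} \cT_\PP\langle D \rangle \to 0,
\]
so the plan is simply to produce an $\cO_\PP$-linear retraction of the left-hand inclusion (equivalently, a section of $q$). The guiding observation is that this inclusion is the affine, or Euler, section: under the identification of these sheaves with sheafifications of graded $\bU$-modules it corresponds to the inclusion $\bU\epsilon \hookrightarrow \Der_\bU(-\log F)$ of the line spanned by the Euler derivation $\epsilon = x_0\partial_0 + \cdots + x_\ell\partial_\ell$, which lies in $\Der_\bU(-\log F)$ because $D$ is a cone.

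Concretely, I would first match the displayed sheaf sequence with the sheafification, after the twist by $\cO_\PP(-1)$, of the exact sequence of graded $\bU$-modules
\[
0 \to \bU\epsilon \to \Der_\bU(-\log F) \to \Der_\bU(-\log F)_0 \to 0,
\]
using the identification $\widetilde{\Der_\bU(-\log F)_0} \simeq \cT_\PP\langle D \rangle(-1)$ recalled above together with the analogous $\widetilde{\Der_\bU(-\log F)} \simeq \hat \cT_\PP\langle D \rangle(-1)$ coming from \eqref{eq_affine_tangent_jac}. The Euler splitting $\Der_\bU(-\log F) = \Der_\bU(-\log F)_0 \oplus \bU\epsilon$, already recorded above and valid since $\mathrm{char}(\bk)=0$ (or whenever $\mathrm{char}(\bk)$ does not divide $d$), then delivers what we want. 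In fact the retraction can be written down explicitly: the assignment $\theta \mapsto \theta(F)/(dF)$ defines a $\bU$-linear map $\Der_\bU(-\log F) \to \bU$, well defined precisely because $\theta(F) \in (F)$ on this module and because $d$ is invertible in $\bk$, and it sends $\epsilon$ to $\epsilon(F)/(dF) = 1$ by the Euler relation $\epsilon(F) = dF$. This is therefore a retraction of $\bU\epsilon \hookrightarrow \Der_\bU(-\log F)$; sheafifying it splits $q$ and yields $\hat \cT_\PP\langle D \rangle \simeq \cT_\PP\langle D \rangle \oplus \cO_\PP$.

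The only genuinely delicate point is not the splitting itself but the verification that the map appearing as $\cO_\PP \hookrightarrow \hat \cT_\PP\langle D \rangle$ in \eqref{hatTY}, defined through the affine cotangent extension \eqref{affine tangent}, really coincides (up to the twist) with the sheafified Euler inclusion $\bU\epsilon \hookrightarrow \Der_\bU(-\log F)$, together with the attendant bookkeeping of twists. I would check this on the standard affine charts $\{x_i \neq 0\}$, where $\hat{\cT}_{\check\PP}$ trivialises and $\epsilon$ visibly spans the kernel of $\hat{\cT}_{\check\PP} \to \cT_{\check\PP}$, the compatibility with the maps onto $\cJ_D(d)$ (resp. $\cN_{D/\check\PP}$) being then immediate. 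A purely sheaf-theoretic alternative, avoiding modules altogether, is to observe that the displayed sequence is the pullback of the Euler sequence $0 \to \cO_\PP \to \hat \cT_\PP \to \cT_\PP \to 0$ along $\cT_\PP\langle D \rangle \hookrightarrow \cT_\PP$ and to realise the retraction as contraction against $dF$ followed by division by $d$; but the module-theoretic route above seems the most transparent.
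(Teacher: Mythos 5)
Your proof is correct, but it takes a genuinely different route from the paper's. The paper argues by killing the extension class: applying $\Hom(-,\cO_{\PP})$ to the bottom row of Diagram \eqref{diag-logsyz}, it observes that $\Ext^1\left(\cJ_D(d),\cO_\PP\right) \to \Ext^1\left(\cT_\PP,\cO_\PP\right)\simeq \bk$ is surjective (the twisted sequence pulls back from the free presentation by $\cO_{\check\PP}^{\ell+1}$), so the restriction map $\Ext^1\left(\cT_\PP,\cO_\PP\right) \to \Ext^1\left(\cT_\PP\langle D \rangle,\cO_\PP\right)$ vanishes; a commutative diagram then exhibits the class of $0 \to \cO_\PP \to \hat\cT_\PP\langle D \rangle \to \cT_\PP\langle D \rangle \to 0$ as the image of the Euler class under that zero map, hence the sequence splits — abstractly, with no splitting morphism written down. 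You instead construct the splitting: you identify the sequence with the (twisted) sheafification of $0 \to \bU\epsilon \to \Der_\bU(-\log F) \to \Der_\bU(-\log F)_0 \to 0$ and produce the explicit $\cO_\PP$-linear retraction $\theta \mapsto \theta(F)/(dF)$, which sends $\epsilon \mapsto 1$ by the Euler relation; this is just the sheafified form of the Euler decomposition the paper records at the outset. The identification you flag as delicate is in fact immediate from \eqref{eq_affine_tangent_jac}: since $\hat\cT_{\check\PP}\to\cJ_D(d)$ is $\nabla F$ modulo $F$, a local section $\theta$ of $\hat\cT_\PP\langle D \rangle$ is exactly one with $\theta(F)\in(F)$, the quotient $\theta(F)/F$ is unique because $F$ is a non-zero-divisor, and the Euler field visibly generates the kernel of $\hat\cT_{\check\PP}\to\cT_{\check\PP}$; your ``sheaf-theoretic alternative'' (pullback of the Euler sequence along $\cT_\PP\langle D \rangle \hookrightarrow \cT_\PP$, retraction by contraction with $\nabla F$ and division by $dF$) is actually the cleanest formulation and avoids the chart-by-chart bookkeeping entirely. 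There is no circularity with the paper's remark after the lemma, since that identification does not use the splitting. Comparing the two: your argument is constructive and elementary, makes transparent that the only hypothesis is that $\mathrm{char}(\bk)$ does not divide $d$ (invertibility of $d$ is all the retraction needs, matching the paper's closing caveat), and does not require knowing $\Ext^1(\cT_\PP,\cO_\PP)\simeq \bk$; the paper's Ext argument is less explicit but reuses the diagrammatic framework of \eqref{diag-logsyz} already in place and pinpoints where the obstruction lives, a viewpoint that generalises to situations where no explicit Euler-type retraction is available.
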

\begin{proof}
Recall that we denote by $\cJ_D$ the restriction of the Jacobian ideal to $D$. Diagram (\ref{diag-logsyz}) implies that, applying the functor $\Hom(-,\cO_{\PP})$ to its lower horizontal sequence (corresponding to the sequence in \eqref{eq_affine_tangent_jac}), the induced map
$$
\Ext^1\left(\cJ_D(d),\cO_\PP\right) \rightarrow \Ext^1\left(\cT_\PP,\cO_\PP\right)\simeq \bk
$$
is surjective. This implies that the following map between extension groups vanishes:
\begin{equation}\label{diag-logext}
\Ext^1\left(\cT_\PP,\cO_\PP\right) \rightarrow \Ext^1\left(\cT_\PP\langle D \rangle,\cO_\PP\right).
\end{equation}
Notice that the extension in (\ref{hatTY}) is defined between the logarithmic derivations and the Euler vector field, which implies the following commutative diagram
$$
\xymatrix@-1ex{
 & \cO_{\PP} \ar@{^(->}[d] \ar@{=}[r] & \cO_{\PP} \ar@{^(->}[d] \\
 0 \ar[r] & \hat \cT_\PP\langle D \rangle \ar@{->>}[d] \ar[r] & \hat \cT_\PP \ar@{->>}[d] \ar[r] & \cJ_D(d) \ar[d]^\simeq \ar[r] & 0 \\
 0 \ar[r] & \cT_\PP\langle D \rangle  \ar[r] &  \cT_\PP \ar[r] & \cJ_D(d) \ar[r] & 0\\
 }
$$
This allows us to conclude that the extension class considered in (\ref{hatTY}) belongs to the image of (\ref{diag-logext}) and therefore it is the zero class. This proves the statement. 
\end{proof}

\begin{remark}
The previous discussion also applies over fields of positive characteristic, provided that $\mathrm{char}(\bk)$ does not divide $d$. This restriction is necessary because when $\mathrm{char}(\bk)$ does divide $d$, we have that the Euler derivation belongs to $\Der_\bU(-\log (F))_0$, inducing a different diagram with respect to Diagram \eqref{diag-logsyz}, from which we cannot have the described splitting. In any case, $\cT_\PP\langle D \rangle$ can always be defined as the sheafification of the quotient of $\Der_\bU(-\log(F))$ by the Euler derivation, but this no longer identifies with $\Der_\bU(-\log (F))_0$.
\end{remark}

\subsubsection{Projective duality}

Consider a reduced closed subscheme $X$ of $\PP=\PP^{\ell-1}$ and write $X_{\sm}$ for its smooth locus. Set $\cT_{X,x}$ for the tangent space of $X$ at $x\in X$. Then we define the normal variety $\WW_X$ as the Zariski closure in $\PP \times \check \PP$ of:
$$
\WW^\circ_X = \left\{ (x,H) \in X_{\sm} \times \check{\PP} \: | \: H \supset \cT_{X,x} \right\}.
$$
In particular, if $X$ is smooth, it is possible to define $\WW_X$ as $\PP(\cN_{X/\PP}(-1))$.

Observe that $\WW_X$ is a reduced closed subscheme of the point-hyperplane incidence variety $\II$:
$$
\WW_X \subset \II= \left\{ (x,H) \in \PP \times \check{\PP} \: | \: x \in H \right\}.
$$

Denote by $h$ and $\check{h}$, respectively, the hyperplane classes of $\PP$ and $\check \PP$. Denote by $\hat{\pip}$ and $\hat{\rhop}$ the canonical projections of the product $\PP \times \check{\PP}$ to each of its factors. Their restrictions to $X \times \check{\PP}$ will be denoted respectively by $\overline{\pip}$ and $\overline{\rhop}$ and, furthermore, we will denote by $\pip$ and $\rhop$ their respective restrictions to $\WW_X$. By abuse of notation, we will denote also by $h$ and $\check{h}$ their respective pullbacks to $\PP \times \check{\PP}$, as well as their restrictions to the subvarieties described above.

The image $\check{X}:=\overline{\rhop}(\WW_X)$ is known as the \textit{dual variety} of $X$.
The \textit{biduality theorem} asserts that $X$ itself is the dual of $\check{X}$. 
The crux of the biduality theorem is the fact that $\WW_X = \WW_{\check{X}}$ as subvarieties of $\PP \times \check{\PP}$. 
We refer to \cite{gelfand-kapranov-zelevinsky} for a comprehensive description. Finally, we will denote by $\tilde{\pip}$ and $\tilde{\rhop}$ the two projections from $\II$, respectively to $\PP$ and $\check{\PP}$, given once more as the restrictions from $\PP \times \check{\PP}$. We will denote by the same notation the corresponding affine projections. For instance, if $\check{\PP}=\PP(V^\vee)$ then the projections from $X\times V$ to $X$ and $V$ will be denoted again, respectively, by $\overline{\pip}$ and $\overline{\rhop}$.

\subsection{Logarithmic differentials as direct image sheaves}
\label{subsection:dual}

We are now in the position to state and prove Theorem \ref{main:direct image}, the main result of this section.
\begin{theorem}\label{thm-pushforwardlog}
    Let $X \subset \PP$ be a reduced closed subscheme such that all the irreducible components of $D = \check{X}$ are hypersurfaces of $\check{\PP}$. Then the logarithmic tangent sheaf $\TD$ of $D$ satisfies
    $$
    \TD (-1) \simeq \tilde{\rhop}_* \left(\cI_{\WW_X/\II}(h)\right),
    $$
    where $\cI_{\WW_X/\II}$ is ideal sheaf of $\WW_X$ seen as a subscheme of the flag variety $\II$.
\end{theorem}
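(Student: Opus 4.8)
The plan is to exploit that $\tilde{\rhop}\colon \II \to \check\PP$ is a $\PP^{\ell-1}$-bundle and to compute the pushforward fibrewise. With $\bU=\bk[x_0,\dots,x_\ell]$ the coordinate ring of $\check\PP$ and $y_0,\dots,y_\ell$ dual coordinates on $\PP$, the incidence reads $\II=\{\sum_i x_iy_i=0\}$ and the fibre of $\tilde{\rhop}$ over a hyperplane $q\in\check\PP$ is the $\PP^{\ell-1}$ cut out by $\sum_i x_i(q)y_i=0$. Since $h=\tilde{\pip}^*\cO_\PP(1)$ restricts to $\cO_{\PP^{\ell-1}}(1)$ on each fibre, cohomology and base change give $R^{>0}\tilde{\rhop}_*\cO_\II(h)=0$ and identify $\tilde{\rhop}_*\cO_\II(h)$ with the bundle of relative linear forms $\sum_i a_iy_i$ modulo the single incidence relation $\sum_i x_iy_i$. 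Matching $y_i\leftrightarrow \partial_i$ turns this relation into the Euler derivation $\epsilon=\sum_i x_i\partial_i$, so the presentation is exactly the Euler sequence of $\check\PP$ twisted by $-1$:
\[
0\to \cO_{\check\PP}(-1)\xrightarrow{(x_i)} \cO_{\check\PP}^{\ell+1}\to \cT_{\check\PP}(-1)\to 0,\qquad \tilde{\rhop}_*\cO_\II(h)\;\cong\;\cT_{\check\PP}(-1).
\]

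Next I would push forward the ideal sequence $0\to\cI_{\WW_X/\II}(h)\to\cO_\II(h)\to\cO_{\WW_X}(h)\to 0$. By left exactness of $\tilde{\rhop}_*$ and the previous identification this yields
\[
\tilde{\rhop}_*\bigl(\cI_{\WW_X/\II}(h)\bigr)\;=\;\ker\Bigl(\cT_{\check\PP}(-1)\xrightarrow{\ \beta\ }\tilde{\rhop}_*\cO_{\WW_X}(h)\Bigr),
\]
so it remains to identify this kernel with $\TD(-1)$. For this I would compare $\beta$ with the twist of the map in \eqref{diag-deglogtang}, namely $\nabla\colon \cT_{\check\PP}(-1)\to\cJ_D(d-1)$, $\theta\mapsto\theta(F)|_D$, whose kernel is $\TD(-1)$ by definition.

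The identification of $\beta$ rests on biduality. Since $D=\check X$ and $\WW_X=\WW_D$, the variety $\WW_X$ is the closure of the graph of the Gauss map of $D$: over a smooth point $q\in D_{\sm}$ the unique $p$ with $(p,q)\in\WW_X$ is $p=[\partial_0F(q):\cdots:\partial_\ell F(q)]$. Hence the relative linear form attached to $\theta=\sum_i a_i\partial_i$ restricts on $\WW_X$, over $D_{\sm}$, to $\sum_i a_i\partial_iF=\theta(F)$, so $\beta$ agrees with $\nabla$ there. Because every component of $D=\check X$ is a hypersurface, the conormal map $\rhop\colon\WW_X\to D$ is birational, and since $\cJ_D(d-1)$ is torsion-free on the reduced scheme $D$, the natural map $\cJ_D(d-1)\hookrightarrow\tilde{\rhop}_*\cO_{\WW_X}(h)$—being the identity on the dense open $D_{\sm}$—is injective. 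Thus $\beta$ factors as $\cT_{\check\PP}(-1)\xrightarrow{\nabla}\cJ_D(d-1)\hookrightarrow\tilde{\rhop}_*\cO_{\WW_X}(h)$ with injective second arrow, whence $\ker\beta=\ker\nabla=\TD(-1)$, which is the desired isomorphism. On global sections this recovers $\Der_\bU(-\log D)_0$ as the forms of bidegree $(1,\ast)$ on $\II$ vanishing on $\WW_X$, matching the hyperplane-arrangement picture of \cite{FaenziValles}.

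The delicate point is the third step: one must match the relative linear forms on the fibres of $\tilde{\rhop}$ with the Jacobian map as an equality of subsheaves of $\cT_{\check\PP}(-1)$, not merely generically, while tracking the twist. This is where reducedness of $D$ and birationality of $\rhop$ are essential. Reducedness ensures that $\theta(F)$ vanishing on the dense open $D_{\sm}$ forces $\theta(F)\in(F)$, i.e.\ exactly the logarithmic condition cutting out $\TD$; biduality together with $D$ being a hypersurface (reflexivity) gives the birationality of the conormal map and hence the injectivity of $\cJ_D(d-1)\hookrightarrow\tilde{\rhop}_*\cO_{\WW_X}(h)$ needed to pass from $\ker\beta$ to $\ker\nabla$.
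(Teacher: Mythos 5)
Your proposal is correct, and while its skeleton coincides with the paper's --- push forward $0\to\cI_{\WW_X/\II}(h)\to\cO_\II(h)\to\cO_{\WW_X}(h)\to 0$, identify $\tilde{\rhop}_*\cO_\II(h)\simeq\cT_{\check\PP}(-1)$ via the Euler sequence, and compare with the Jacobian presentation $0\to\TD(-1)\to\cT_{\check\PP}(-1)\xrightarrow{\nabla}\cJ_D(d-1)\to 0$ --- the key step is justified by a genuinely different lemma. The paper invokes Nobile's theorem, that $\WW_X$ is the blow-up of $D$ along its Jacobian ideal; this hands over at once the canonical inclusion $\cJ_D(d-1)\hookrightarrow\rhop_*(\pip^*\cO_X(1))$, the isomorphism $\cO_{\WW_X}(h)\simeq\pip^*\cO_X(1)$, and the commutativity of the right square of diagram \eqref{diag-logaritmictang}, after which the snake lemma finishes. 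You instead reprove exactly the portion of this that the theorem needs, directly from biduality: over $q\in D_{\sm}$ the fibre of $\WW_X=\WW_D$ is the Gauss point $[\partial_0 F(q):\cdots:\partial_\ell F(q)]$, so the boundary map $\beta$ agrees generically with $\nabla$, and birationality of $\rhop$ plus torsion-freeness of $\cJ_D(d-1)\subset\cO_D(d-1)$ on the reduced $D$ upgrades this to the factorization $\beta=\iota\circ\nabla$ with $\iota$ injective, whence $\ker\beta=\ker\nabla=\TD(-1)$. This buys self-containedness (no citation of Nobile; in effect an inline proof of the needed consequence), at the cost of one step you should make explicit: to define $\iota$ at all you need $\ker\nabla\subseteq\ker\beta$, which holds because a section $\beta(\theta)$ of the line bundle $\cO_{\WW_X}(h)$ on the \emph{reduced} scheme $\WW_X$ vanishing on $\rhop^{-1}(D_{\sm})$ vanishes identically --- and for that you should note that $\rhop^{-1}(D_{\sm})$ is dense in every irreducible component of $\WW_X$, each component dominating birationally a hypersurface component of $D$ whose generic point is smooth (here the hypothesis that all components of $\check X$ are hypersurfaces, and characteristic zero, enter). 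Your route does not produce the identification $\cO_{\WW_X}(h)\simeq\pip^*\cO_X(1)$, but the statement of the theorem does not require it; the paper uses it only afterwards, e.g.\ in diagram \eqref{diag-logaritmictang} and Corollary \ref{cor_affine_tanlog}.
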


\begin{proof}
    Let $d= \deg(D)$.
    The normal variety $\WW_X$ is the blow-up of $D$ along the Jacobian subscheme, defined by the ideal sheaf $\cJ_D\subset \cO_D$, see for instance \cite[Theorem 1]{Nobile}. In turn, the blow-up of $D$ along $\cJ_D$ is the projectivisation of the Rees algebra $\bigoplus_{t \ge 0} \cJ_D^t$ of $\cJ_D$, which is the quotient of the symmetric algebra of $\cJ_D$ by its torsion part. The projectivisation of $\cJ_D$  defines the scheme $\PP(\cJ_D) \simeq \PP(\cJ_D(d-1))$ and taking the torsion-free part corresponds to isolating the irreducible components of $\PP(\cJ_D)$ forming $\WW_X$, the remaining components (called the torsion components) being projected onto the locus in $D$ where the Jacobian subscheme fails to be locally a complete intersection (see \cite{bigna2018} for a comprehensive discussion). Using the surjection $\cT_{\check{\PP}}(-1) \to \cJ_D(d-1)$ given by the derivatives of the polynomial $F$ defining $D$ (see \eqref{diag-deglogtang}), we write epimorphisms
    \[\bigoplus_{t \ge 0} S^t(\cT_{\check \PP}(-1)) \to \bigoplus_{t \ge 0} S^t(\cJ_D(d-1)) \to \bigoplus_{t \ge 0} (\cJ_D(d-1))^t.\]
    Taking $\Proj$ gives rise to a canonical
     chain of inclusions 
    \begin{equation}\label{norm-in-flag}
    \WW_X \subset \PP(\cJ_D(d-1)) \subset \PP(\cT_{\check{\PP}}(-1))=\II.
    \end{equation}
    
    Moreover, the tautological line bundles associated to the $\Proj$ constructions above agree upon restriction, namely 
    $\cO_{\WW_X}(h)$ is the relatively ample line bundle given by the blow-up construction and agrees with the restriction of $\pi^*(\cO_X(1))$. Pushing forward via $\rho$ the map  $\rho^* (\cJ_D(d-1)) \to \cO_{\WW_X}(h)$, obtained by the Proj construction of the blow-up, we get
    a canonical inclusion of the restricted Jacobian ideal
    \begin{equation}\label{diag-incJacob}
    \cJ_D(d-1) \hookrightarrow \rhop_*(\cO_{\WW_X}(h)) \simeq \rhop_*(\pip^*(\cO_X(1))).
    \end{equation}
   Consider the following short exact sequence
    $$
    0 \longrightarrow \cI_{\WW_X/\II}(h) \longrightarrow \cO_\II(h) \longrightarrow \cO_{\WW_X}(h) \longrightarrow 0.
    $$
    Taking its pushforward by $\tilde{\rhop}$, we get the bottom row of the following commutative diagram
    \begin{equation}\label{diag-logaritmictang}
\xymatrix{
0 \ar[r] & \TD(-1) \ar[d] \ar[r] & \cT_{\check{\PP}}(-1) \ar[d]_{\simeq} \ar[r] & \cJ_D(d-1)\ar@{^{(}->}[d] \ar[r] & 0\\
0 \ar[r] & \tilde{\rhop}_*(\cI_{\WW_X/\II}(h)) \ar[r] & \tilde{\rhop}_*(\cO_\II (h)) \ar[r] & \rhop_*(\pip^*(\cO_X(1))) & & .
}
\end{equation}

Indeed, the middle vertical map is an isomorphism as a direct consequence of the definition of $\cO_{\WW_X}(h) \simeq \pip^*(\cO_X(1))$.
Furthermore, notice that the commutativity of the previous diagram (in particular of the right square) follows naturally from the description of the normal variety as a subvariety of the flag $\II$, passing through the projectivisation of the Jacobian ideal, as depicted in (\ref{norm-in-flag}).
Finally, from the snake lemma applied to the previous diagram, we find the desired isomorphism
\begin{equation*}
     \TD(-1) = \tilde{\rhop}_* \left(\cI_{\WW_X/\II}(h)\right). \qedhere  
\end{equation*}
\end{proof}

Let us now state and comment some direct consequences of the previous theorem.
The first result will relate the logarithmic sheaf to the ideal sheaf of $\WW_X$ seen this time as a subvariety of the product $X \times \check{\PP}$. The described relation will be fundamental in Section \ref{sec-Weymanmethod}.

\begin{corollary}
\label{cor_affine_tanlog}
    Let $X$ be as in Theorem \ref{thm-pushforwardlog} and, moreover, assume it to be non-degenerate and linearly normal. Then we have the following short exact sequence
    $$
    0 \longrightarrow \cO_{\check{\PP}}(-1) \longrightarrow \overline{\rhop}_*(\cI_{\WW_X/(X\times \check{\PP})}(h)) \longrightarrow \TD(-1) \longrightarrow 0.
    $$
\end{corollary}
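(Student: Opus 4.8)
The plan is to compare the two structure sequences of $\WW_X$ inside its two natural ambient spaces $\II$ and $X\times\check\PP$, exploiting that $\WW_X$ sits in both. Writing $\hat{\rhop}:\PP\times\check\PP\to\check\PP$ for the projection, the restriction $\cO_{\PP\times\check\PP}(h)\to\cO_{\WW_X}(h)$ factors through both $\cO_\II(h)$ and $\cO_{X\times\check\PP}(h)$, because $\WW_X\subset\II$ and $\WW_X\subset X\times\check\PP$. I would push the two sequences
\[
0\to\cI_{\WW_X/\II}(h)\to\cO_\II(h)\to\cO_{\WW_X}(h)\to 0,\qquad
0\to\cI_{\WW_X/(X\times\check\PP)}(h)\to\cO_{X\times\check\PP}(h)\to\cO_{\WW_X}(h)\to 0
\]
forward to $\check\PP$ along $\tilde{\rhop}$ and $\overline{\rhop}$, obtaining maps $\alpha,\beta$ into $\rhop_*\cO_{\WW_X}(h)$ whose kernels, by left-exactness, are $\tilde{\rhop}_*\cI_{\WW_X/\II}(h)$ and $\overline{\rhop}_*\cI_{\WW_X/(X\times\check\PP)}(h)$. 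By Theorem \ref{thm-pushforwardlog} the former is $\TD(-1)$, while the latter is the middle sheaf of the desired sequence.

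Next I would compute the three relevant pushforwards of $\cO(h)$ by flat base change over $\check\PP$: one has $\hat{\rhop}_*\cO_{\PP\times\check\PP}(h)=V\otimes\cO_{\check\PP}\cong\cO_{\check\PP}^{\ell+1}$, then $\overline{\rhop}_*\cO_{X\times\check\PP}(h)=H^0(X,\cO_X(1))\otimes\cO_{\check\PP}$, and finally $\tilde{\rhop}_*\cO_\II(h)\cong\cT_{\check\PP}(-1)$, the last being already recorded in \eqref{diag-logaritmictang}. The natural map $p_X:\hat{\rhop}_*\cO_{\PP\times\check\PP}(h)\to\overline{\rhop}_*\cO_{X\times\check\PP}(h)$ is induced by restriction $V=H^0(\PP,\cO_\PP(1))\to H^0(X,\cO_X(1))$; this is exactly where the two hypotheses are used, as non-degeneracy of $X$ forces injectivity and linear normality forces surjectivity, so $p_X$ is an isomorphism. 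The other map $p_\II:\hat{\rhop}_*\cO_{\PP\times\check\PP}(h)\to\tilde{\rhop}_*\cO_\II(h)$ is restriction of linear forms to the varying hyperplane, i.e. the Euler epimorphism $V\otimes\cO_{\check\PP}\twoheadrightarrow\cT_{\check\PP}(-1)$, whose kernel is the tautological $\cO_{\check\PP}(-1)$.

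The conclusion is a diagram chase. Since all restrictions factor through $\cO_{\PP\times\check\PP}(h)$, the composite $\gamma:\hat{\rhop}_*\cO_{\PP\times\check\PP}(h)\to\rhop_*\cO_{\WW_X}(h)$ equals both $\beta\circ p_X$ and $\alpha\circ p_\II$. Computing $\ker\gamma$ through the isomorphism $p_X$ identifies it with $\ker\beta=\overline{\rhop}_*\cI_{\WW_X/(X\times\check\PP)}(h)$; computing it through the epimorphism $p_\II$ identifies it with $p_\II^{-1}(\ker\alpha)=p_\II^{-1}(\TD(-1))$, which is an extension of $\TD(-1)$ by $\ker p_\II=\cO_{\check\PP}(-1)$. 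Equating the two descriptions yields
\[
0\to\cO_{\check\PP}(-1)\to\overline{\rhop}_*\cI_{\WW_X/(X\times\check\PP)}(h)\to\TD(-1)\to 0,
\]
as claimed.

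The main obstacle I anticipate is not any single computation but the bookkeeping of compatibility: one must verify that $\gamma$ is well defined and that its two factorizations through $p_X$ and $p_\II$ are precisely those induced by the inclusions $\WW_X\subset X\times\check\PP$ and $\WW_X\subset\II$, a naturality statement for the restriction maps on $\check\PP$. The conceptual crux, by contrast, is the rank matching $h^0(\cO_X(1))=\ell+1$ encoded in the isomorphism $p_X$: it is exactly here that both non-degeneracy and linear normality are indispensable, since dropping either would enlarge $\overline{\rhop}_*\cO_{X\times\check\PP}(h)$ and break the identification of kernels.
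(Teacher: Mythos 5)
Your proof is correct, and it reaches the conclusion by a genuinely different chase than the paper's. The paper works upstairs on $\PP\times\check{\PP}$: it assembles a $3\times 3$ diagram whose rows and columns are the restriction sequences linking $\cO_{\PP\times\check{\PP}}(h)$, $\cO_\II(h)$, $\cO_{X\times\check{\PP}}(h)$ and $\cO_{\WW_X}(h)$, introduces the auxiliary kernel $\cK$ of $\hat{\pip}^*\cI_X(1)\to\cI_{\WW_X/\II}(h)$, uses $H^0(\cI_X(1))=H^1(\cI_X(1))=0$ to get $\hat{\rhop}_*(\cK)=0$ and $R^1\hat{\rhop}_*(\cK)\simeq\tilde{\rhop}_*(\cI_{\WW_X/\II}(h))\simeq\TD(-1)$, and then pushes forward the column $0\to\cK\to\cO_{\PP\times\check{\PP}}(-\check{h})\to\cI_{\WW_X/(X\times\check{\PP})}(h)\to 0$, so that the connecting term $R^1\hat{\rhop}_*(\cK)$ supplies the quotient $\TD(-1)$. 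You instead stay downstairs on $\check{\PP}$ and never take higher direct images of an auxiliary sheaf: you push forward only the two structure sequences of $\WW_X$, identify $\tilde{\rhop}_*\cO_\II(h)\simeq\cT_{\check{\PP}}(-1)$ with the Euler quotient of $V\otimes\cO_{\check{\PP}}$ (kernel $\cO_{\check{\PP}}(-1)$), and run the kernel chase on $\gamma$: through the isomorphism $p_X$ one gets $\ker\gamma\cong\ker\beta$, while through the epimorphism $p_\II$ one gets $\ker\gamma=p_\II^{-1}(\ker\alpha)$, an extension of $\TD(-1)$ by $\cO_{\check{\PP}}(-1)$ (the surjection $p_\II^{-1}(\ker\alpha)\to\ker\alpha$ holds because a pullback of an epimorphism of sheaves is an epimorphism). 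The hypotheses enter at exactly the same point in both arguments — non-degeneracy and linear normality are precisely $H^0(\cI_X(1))=H^1(\cI_X(1))=0$, which for you makes $p_X$ an isomorphism and for the paper kills $\hat{\rhop}_*(\cK)$ and shifts $\tilde{\rhop}_*\cI_{\WW_X/\II}(h)$ into $R^1$ — so the two proofs are in effect transposed chases on the same grid. Yours buys elementarity (only left-exactness of the direct image, with the single needed vanishing $R^1\hat{\rhop}_*\cO_{\PP\times\check{\PP}}(-\check{h})=0$ absorbed into the Euler identification already recorded in the paper's diagram \eqref{diag-logaritmictang}), at the modest cost of the compatibility bookkeeping you flagged, which is immediate since both composites $\cO_{\PP\times\check{\PP}}(h)\to\cO_{\WW_X}(h)$ are the restriction along $\WW_X\hookrightarrow\PP\times\check{\PP}$.
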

\begin{proof}
    Consider the following commutative diagram
    \begin{equation*}
    \xymatrix@-1.5ex{
    & 0 \ar[d] & 0 \ar[d] & 0 \ar[d] \\
    0 \ar[r] & \mathcal{K} \ar[r] \ar[d] & \hat{\pip}^* \cI_X(1) \otimes \hat{\rhop}^*\cO_{\check{\PP}} \ar[r] \ar[d] &  \cI_{\WW_X/\II}(h) \ar[r] \ar[d] & 0\\
    0 \ar[r] & \cO_{\PP \times \check{\PP}}(-\check{h}) \ar[r] \ar[d] & \cO_{\PP \times \check{\PP}}(h) \ar[r] \ar[d] & \cO_\II (h) \ar[r] \ar[d] & 0\\
    0 \ar[r] & \cI_{\WW_X/(X\times \check{\PP})}(h) \ar[r] \ar[d]  & \cO_{X \times \check{\PP}}(h) \ar[d] \ar[r]& \cO_{\WW_X}(h) \ar[d] \ar[r] & 0\\
    & 0 & 0 & 0
    }
    \end{equation*}
    Since $X$ is non-degenerate and linearly normal, we have that $H^0(\cI_X(1))=H^1(\cI_X(1))=0$, therefore $\hat{\rhop}_*(\mathcal{K})=0$ and
    $R^1\hat{\rhop}_*(\mathcal{K}) \simeq \tilde{\rhop}_*(\cI_{\WW_X/\II}(h))$.
    Combining it with the pushforward by $\hat{\rhop}$ of the left vertical sequence in the previous diagram concludes the proof.
\end{proof}

The next result describes more in detail the case where $X$ is 0-dimensional, for which $D = X^\vee$ is a hyperplane arrangement. In particular, we recover a result due to Faenzi and Vallès (see \cite{FaenziValles}).

\begin{corollary}
        Let $X$ be as in Theorem \ref{thm-pushforwardlog}. Then if $\dim X =0$, we have that 
        $$
        \TD(-1) \simeq \tilde{\rhop}_*\tilde{\pip}^* \cI_X(1).
        $$
\end{corollary}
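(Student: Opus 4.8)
The plan is to reduce the statement directly to Theorem~\ref{thm-pushforwardlog}, whose conclusion reads $\TD(-1) \simeq \tilde{\rhop}_*(\cI_{\WW_X/\II}(h))$. Thus it suffices to produce a natural isomorphism $\cI_{\WW_X/\II}(h) \simeq \tilde{\pip}^*\cI_X(1)$ under the hypothesis $\dim X = 0$. First I would unravel the definition of $\WW_X$ in this case. Since $X$ is reduced and $0$-dimensional, $X = X_{\sm}$ is a finite set of points and the tangent space $\cT_{X,x}$ is trivial at each $x \in X$; hence the condition $H \supset \cT_{X,x}$ reduces to $x \in H$, which is precisely the incidence condition cutting out $\II$. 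Consequently $\WW^\circ_X = \{(x,H) \in \II \mid x \in X\}$ is already closed, being a disjoint union of fibres $\{x\}\times \check{H}_x$ with $\check{H}_x = \{H \mid x \in H\} \cong \check{\PP}^{\ell-1}$, so one obtains the set-theoretic identity $\WW_X = \tilde{\pip}^{-1}(X)$.

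Next I would promote this to an isomorphism of ideal sheaves. The projection $\tilde{\pip}\colon \II \to \PP$ realises $\II = \PP(\cT_\PP(-1))$ as a $\PP^{\ell-1}$-bundle over $\PP$, so it is smooth, in particular flat, with geometrically reduced fibres. Therefore the scheme-theoretic preimage $\tilde{\pip}^{-1}(X) = \II \times_\PP X$ of the reduced scheme $X$ is itself reduced (its structure morphism to $X$ is smooth) and agrees with $\WW_X$ as a subscheme of $\II$, since the Zariski closure defining $\WW_X$ carries the reduced structure. Flatness of $\tilde{\pip}$ keeps the inclusion $\cI_X \hookrightarrow \cO_\PP$ injective after pullback, and this yields $\cI_{\WW_X/\II} = \cI_{\tilde{\pip}^{-1}(X)/\II} \simeq \tilde{\pip}^*\cI_X$.

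Finally I would account for the twist: by the conventions of \S\ref{section : pushforward}, the class $h$ is the pullback to $\II$ of the hyperplane class of $\PP$, so that $\cO_\II(h) = \tilde{\pip}^*\cO_\PP(1)$. Combining the previous isomorphisms gives $\cI_{\WW_X/\II}(h) \simeq \tilde{\pip}^*\cI_X \otimes \tilde{\pip}^*\cO_\PP(1) = \tilde{\pip}^*(\cI_X(1))$, and pushing forward along $\tilde{\rhop}$ and invoking Theorem~\ref{thm-pushforwardlog} produces $\TD(-1) \simeq \tilde{\rhop}_*\tilde{\pip}^*\cI_X(1)$, as claimed.

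The only point requiring genuine care, and the step I would treat as the main obstacle, is the scheme-theoretic identification $\WW_X = \tilde{\pip}^{-1}(X)$ together with the compatibility $\cI_{\WW_X/\II} \simeq \tilde{\pip}^*\cI_X$; everything else is formal once the smoothness (hence flatness) of $\tilde{\pip}$ is in hand. I would also observe that the standing hypothesis of Theorem~\ref{thm-pushforwardlog}, namely that every irreducible component of $D=\check{X}$ be a hypersurface of $\check{\PP}$, is automatic here, since the dual of a finite point set is a union of the hyperplanes dual to its points.
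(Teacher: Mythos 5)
Your proposal is correct and follows essentially the same route as the paper, which simply notes that $\dim X = 0$ gives $\WW_X = (X \times \check{\PP}) \cap \II$ (your $\tilde{\pip}^{-1}(X)$), deduces $\cI_{\WW_X/\II}(h) \simeq \tilde{\pip}^*\cI_X(1)$, and applies Theorem~\ref{thm-pushforwardlog}. You merely make explicit the scheme-theoretic details the paper leaves implicit (reducedness of $\WW_X$ versus the flat pullback of $X$, and $\cO_\II(h) = \tilde{\pip}^*\cO_\PP(1)$), all of which are accurate.
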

\begin{proof}
    This result follows directly from the fact that, being $\dim X = 0$, we have $\WW_X = (X \times \check{\PP}) \cap \II$ and therefore
    $
    \tilde{\pip}^* \cI_X(1) \simeq \cI_{\WW_X / \II}(h)
    $.
\end{proof}

\begin{remark}
At the level of the Jacobian ideal sheaf, we have
$\rhop_*(\pip^*(\cO_X(1))) \simeq \cJ_D(d-1)$ if and only if  $R^1\tilde{\rhop}_*(\cI_{\WW_X/\II}(h))=0$. Indeed, this is tantamount to surjectiveness of $\tilde{\rhop}_*(\cO_\II (h)) \rightarrow \rhop_*\pip^*\cO_X(1)$ and thus to the vanishing of the higher direct image sheaf by Diagram (\ref{diag-logaritmictang}).
\end{remark}

The remaining of this section will be devoted to the discussion on the previous vanishing of the higher direct image. To do so, let us first recall the notions that will be necessary to apply the \textit{Theorem of formal functions} (for more details, see \cite[III.11]{Hartshorne}).\\
Considering a morphism of schemes $f:X \rightarrow S$ and a point $s \in S$, we would like to describe the fiber product 
$$
X_k = X \times_S \Spec\left(\cO_{S,s} / \mathfrak{m}_s^k\right).
$$
Taking an affine covering, which allows to restrict our description to such case, denote by $\phi:A\rightarrow B$ the associate ring map and by $\mathfrak{q}$ the prime ideal of $A$ corresponding to the point $s$. Therefore $X_k$ will be given by the spectrum of
$$
B \otimes_A A_{\mathfrak{q}} / \mathfrak{q}^n A_{\mathfrak{q}} \simeq B \otimes_A \left(A / \mathfrak{q}^n A \right)_{\mathfrak{q}} \simeq \left( B / \phi(\mathfrak{q})^n B \right)_{\phi(\mathfrak{q})}.
$$
Moreover if $s$ is a closed point, which means $\mathfrak{q}$ to be maximal, we do not need to localize.\\ 
Applied to our case, consider the projection $\tilde{\rhop} : \II \rightarrow \check{\PP}$ and a point of the dual projective space $y \in \check{\PP}$. We will denote both by $H_y$ the correspondent hyperplane in $\PP$ and also the fiber $\tilde{\rhop}^{-1}(y)$, seen as a subscheme of the incidence variety. Using the introduced notation on the thickened fibers, we have that $\II_1$ is simply the fiber $H_y$.\\ 
Furthermore, the previous considerations on the local description imply the following short exact sequence
$$
0 \rightarrow \cI_{H_y}^k \rightarrow \cO_{\II} \rightarrow \cO_{\II_k} \rightarrow 0,
$$
which means that the \textit{thickened fiber} $\II_k$ is defined by the $k$-th power of the ideal defining $H_y$ as a subvariety of $\II$. This implies the subsequent short exact sequence
\begin{equation}\label{thick-fiber}
0 \rightarrow \frac{\cI^{k-1}_{H_y}}{\cI^{k}_{H_y}} \rightarrow \cO_{\II_k} \rightarrow \cO_{\II_{k-1}} \rightarrow 0.
\end{equation}
Being the ideal of $H_y$, in the coordinate ring of the incidence variety, defined by a regular sequence, we know by \cite[II-8.21A(e)]{Hartshorne} that 
$$
{\cI^{k-1}_{H_y}}/{\cI^{k}_{H_y}} \simeq S^{k-1} \left(\cI_{H_y}/\cI^{2}_{H_y}\right),
$$
where $S^{k-1}$ denotes the $(k-1)$-st symmetric power.
Finally, notice that $H_y$ is a smooth complete intersection of $n$ elements in the linear system $|\check{h}|$. This implies that $\cI_{H_y}/\cI^{2}_{H_y}$ gives the conormal bundle associated to $H_y$ and, specifically, 
$$
\frac{\cI_{H_y}}{\cI^{2}_{H_y}} \simeq (\cO_{\II}(\check{h})^n)_{H_y} \simeq \cO_{H_y}^n.
$$
Substituting in (\ref{thick-fiber}), we obtain that having the vanishing
$$
H^1\left(H_y, (\cI_{\WW_X/\II}(h))_{H_y}\right) = 0
$$
implies all the subsequent ones, on the thickened fibers,
$$
H^1\left(\II_k, (\cI_{\WW_X/\II}(h))_{\II_k}\right) = 0.
$$
We are therefore in the position to state the following result.
\begin{lemma}\label{lemma-vanishingFormal}
If $H^1\left(H_y, (\cI_{\WW_X/\II}(h))_{H_y}\right) = 0$ holds for every point $y \in \check{\PP}$, then we have the isomorphism 
$$
\rhop_*(\pip^*(\cO_X(1))) \simeq \cJ_D(d-1).
$$
\end{lemma}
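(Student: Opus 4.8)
The plan is to reduce the statement to the vanishing $R^1\tilde{\rhop}_*(\cI_{\WW_X/\II}(h)) = 0$, which by the Remark preceding the lemma is equivalent to the desired isomorphism $\rhop_*(\pip^*(\cO_X(1))) \simeq \cJ_D(d-1)$. Since $\tilde{\rhop} : \II \to \check{\PP}$ is projective and $\cI_{\WW_X/\II}(h)$ is coherent, the higher direct image $R^1\tilde{\rhop}_*(\cI_{\WW_X/\II}(h))$ is itself a coherent sheaf on the Noetherian scheme $\check{\PP}$; as its support is closed, it vanishes if and only if its stalk at every closed point $y \in \check{\PP}$ is zero, so it suffices to work closed point by closed point.

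First I would fix a closed point $y \in \check{\PP}$ and apply the Theorem of Formal Functions \cite[III.11]{Hartshorne} in the form
$$
\left(R^1\tilde{\rhop}_*(\cI_{\WW_X/\II}(h))\right)^\wedge_y \simeq \varprojlim_k H^1\left(\II_k, (\cI_{\WW_X/\II}(h))_{\II_k}\right),
$$
where the completion on the left is the $\mathfrak{m}_y$-adic one and $\II_k$ is the $k$-th thickening of the fiber $H_y$ introduced above. The discussion preceding the statement, built on the sequence \eqref{thick-fiber} together with the triviality $\cI_{H_y}/\cI^2_{H_y} \simeq \cO_{H_y}^n$ of the conormal bundle, shows that the hypothesis $H^1(H_y, (\cI_{\WW_X/\II}(h))_{H_y}) = 0$ propagates inductively to $H^1(\II_k, (\cI_{\WW_X/\II}(h))_{\II_k}) = 0$ for every $k \geq 1$. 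Hence every term of the inverse system on the right vanishes, and so does its limit, giving that the completed stalk is zero.

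Second, I would pass from the vanishing of the completed stalk to the vanishing of the stalk itself. Writing $M$ for the finitely generated module $(R^1\tilde{\rhop}_*(\cI_{\WW_X/\II}(h)))_y$ over the Noetherian local ring $\cO_{\check{\PP},y}$, its $\mathfrak{m}_y$-adic completion is $\hat{M} \simeq M \otimes_{\cO_{\check{\PP},y}} \hat{\cO}_{\check{\PP},y}$; since $\hat{\cO}_{\check{\PP},y}$ is faithfully flat over $\cO_{\check{\PP},y}$, the vanishing $\hat{M} = 0$ forces $M = 0$. As this holds at every closed point, the coherent sheaf $R^1\tilde{\rhop}_*(\cI_{\WW_X/\II}(h))$ vanishes, and the Remark then yields $\rhop_*(\pip^*(\cO_X(1))) \simeq \cJ_D(d-1)$.

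The only genuinely delicate point is the correct invocation of formal functions, namely that the completed stalk of the higher direct image is computed by the inverse limit of the cohomologies along the thickened fibers; once this is granted, the thickened-fiber vanishing already established above closes the argument, the final passage from completed stalk to sheaf being a routine faithful-flatness argument.
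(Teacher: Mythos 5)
Your proof is correct and follows essentially the same route as the paper's: the paper likewise invokes the Theorem of Formal Functions together with the preceding thickened-fiber discussion to deduce $R^1\tilde{\rhop}_*(\cI_{\WW_X/\II}(h))=0$, and then concludes via Diagram \eqref{diag-logaritmictang} (the Remark). The only difference is that you spell out the steps the paper leaves implicit, namely the reduction to closed points via coherence and the passage from the vanishing of the $\mathfrak{m}_y$-adic completion to that of the stalk by faithful flatness of $\hat{\cO}_{\check{\PP},y}$ over $\cO_{\check{\PP},y}$.
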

\begin{proof}
By the Theorem of formal functions, all the vanishings induced by $H^1\left(H_y, (\cI_{\WW_X/\II}(h))_{H_y}\right) = 0$ imply that $R^1\tilde{\rhop}_*(\cI_{\WW_X/\II}(1,0)) = 0$. By Diagram \ref{diag-logaritmictang}, this is equivalent to the required isomorphism.
\end{proof}

From a first glance, the cohomological vanishing of the previous lemma seems very restrictive. That is the reason why we will now study the example of plane curves. Nevertheless, such a restrictiveness highlights the choice of considering \textit{adjoint varieties}, for which we will prove the isomorphism always to be true.

\begin{example}
Consider a plane curve $X \in |\cO_{\PP^2}(d)|$ in the projective plane. Let us fix a point $y \in \check{\PP}^2$ and denote, as before, the corresponding line in $\PP^2$ by $H_y$.\\
First of all, let us consider the restriction to $H_y$ of the short exact sequence defining $\WW_X \subset \II$, that gives us
$$
\xymatrix@-2ex{
0 \ar[r] & \mathcal{T}or^1 (\cO_{\WW_X}(h), \cO_{H_y}) \ar[r] & \left(\cI_{\WW_X/\II}(h)\right)_{|H_y} \ar[rr] \ar[rd] & & \cO_{H_y}(1) \ar[r] & \cO_{\WW_X \cap H_y}(1) \ar[r] & 0\\
& & & \cI_{\WW_X \cap H_y}(1) \ar[ur] \ar[dr]\\
& & 0 \ar[ur] & & 0
}
$$
We can now divide our studying in the following cases:
\begin{itemize}
    \item $H_y$ is an irreducible component of $\WW_X$, which implies that the line $\tilde{\pip}(H_y)$ is also an irreducible component of $X$.
    \item $H_y$ is not an irreducible component of $\WW_X$, and neither its projection on $X$ is a component of $X$.
\end{itemize}
In the latter case, the support of $\mathcal{T}or^1 (\cO_{\WW_X}, \cO_{H_y})$ is at most 0-dimensional, which implies the vanishing of its first and second cohomology group. Therefore, $h^0\left(\cO_{\WW_X \cap H_y}(1)\right)\geq 3$ is equivalent to $H^1\left(\left(\cI_{\WW_X/\II}(h)\right)_{|H_y}\right)\neq 0$. Observing that $h^0\left(\cO_{\WW_X \cap H_y}(1)\right)$ is the number of points of $X$ tangent along $H_y$, we get that for any curve $X$ with (at least) a tritangent line, we cannot hope for the cohomological vanishings required in Lemma \ref{lemma-vanishingFormal}.

Regarding the first case, consider the following short exact sequence, obtained from the inclusions $H_y \subset \WW_X \subset \II$,
$$
0 \longrightarrow \cI_{\WW_X / \II} \longrightarrow \cI_{H_y / \II} \longrightarrow \cI_{H_y / \WW_X} \longrightarrow 0.
$$
Notice that $\cI_{H_y / \WW_X}$ is supported on the components of $\WW_X$ different from $H_y$ (denote $\WW_X = H_y \cup L$), therefore the restriction $\left(\cI_{H_y / \WW_X}\right)_{|H_y}$ will be given by the 0-dimensional scheme $Z$ defined by the intersection $L \cap H_y$. In particular, if $\length(Z) > 4$, then $H^1\left(\left(\cI_{\WW_X/\II}(h)\right)_{|H_y}\right)\neq 0$. 
\end{example}

\section{The geometric method and the Jacobian ideal}\label{sec-Weymanmethod}
    
Here we describe our use of the \textit{geometric method}, in the terminology of \cite{Weyman}, to compute resolutions of sheaves of logarithmic differentials.
This approach has been used extensively in \cite{gelfand-kapranov-zelevinsky} in the framework of projective duality, where the authors use the so-called Cayley method in order to obtain a complex resolving the discriminant, called the discriminant complex. It was already pointed out in \cite{tevelev} that the geometric method could be used to obtain this discriminant complex, but it seems that no application to logarithmic sheaves has been proposed so far. Describing such an application is the main goal of this section.

\subsection{The geometric method and normalisation}

Let $X\subset \PP=\PP(V)$ be a smooth connected projective variety of dimension $N$ with $V$ a vector space of dimension $\ell$.

Consider the affine conormal bundle sequence:
\[
0 \to \cN_{X/\PP}^\vee(1) \to V \otimes \cO_X \to \hat \Omega_X(1)  \to 0.
\]
The total space $\rN_{X/\PP}^\vee(1)=\Tot(\cN_{X/\PP}^\vee(1))$ is a subvariety of the trivial bundle $\Tot(V\otimes \cO_X) \simeq X\times \Vv$, of codimension equal to $N+1$, defined in the fibres of the projection $\overline{\rho} : X\times \Vv \to \Vv$ by the equations in $\hat \Omega_X(1) \simeq \left(\hat \cT_X(-1)\right)^\vee$. 
Then, there exists an exact \emph{Koszul complex} of the form:
\begin{align*}
     0 \to \wedge^{N+1} \overline{\pip}^*(\hat \cT_X(-1)) &\to 
 \wedge^{N} \overline{\pip}^*(\hat \cT_X(-1))  \to \cdots \\
  \to \cdots \to & \overline{\pip}^*(\hat \cT_X(-1))  \to \cO_{X\times \Vv} \to \cO_{\rN_{X/\PP}^\vee(1)} \to 0, 
\end{align*}
where $\overline{\pi} : X\times \Vv \to X$ is the natural projection onto the first component.
Recall that we write $\pi:\rN_{X/\PP}^\vee(1)\to X$ and $\rho:\rN_{X/\PP}^\vee(1)\to \Vv$ as the restrictions of $\pi$ and $\rho$ to $\rN_{X/\PP}^\vee(1)$ and, in this setting, the affine cone $\hat D \subset \Vv$ over the dual variety $D=X^\vee$ is the image of $\rho$.
Let $\cE$ be a vector bundle on $X$. Then the coherent sheaf 
$\rho_* (\pi^* (\cE))$ on $\Vv$ is supported on $\hat{D}$. By computing the pushforward $\overline{\rho}_*$ of the Koszul complex above, Weyman's theorem provides a complex 
\[
\bF^\cE_\bullet: \cdots \to \bF^\cE_{-1}\to \bF^\cE_0 \to \bF^\cE_{1}\to \cdots
\] 
whose terms, for $u \in \ZZ$, are of the form:
\begin{equation}
\label{eq_Weyman_res}
    \bF^\cE_{u}:=\bigoplus_{l-p=u} H^l(X,\wedge^p (\hat \cT_X(-1)) \otimes \cE)\otimes \bU(-p).
\end{equation}

\begin{remark}
If $G$ is an algebraic group acting on $X$ and both $\cE$ and $\cO_X(1)$ are $G$-linearised, then $G$ acts linearly on $\PP(V)$ and on $\PP(V^\vee)$. In this setup, the complex $\bF^\cE_\bullet$ will be $G$-equivariant.
\end{remark}

\begin{proposition} \label{normalisation}
Let $X\subset \PP$ be a smooth projective variety whose dual $D$ is a hypersurface.
\begin{enumerate}[label=\roman*)]
    \item \label{it's normal} If $H^l(X,\wedge^p \cT_X(-p))=0$ for $l-p>0$, then $\bF_{>0}^{\cO_X}=0$ and we have an exact complex: 
    \[ 
    0\to \bF_\bullet^{\cO_X} \to \rho_*\pi^*(\cO_X)\to 0.
    \]
    In addition, $\rho_*\pi^*(\cO_X)$ is the normalisation of $\hat D$, the affine cone over $D$.
    \item If moreover $H^l(X,\wedge^l \cT_X(-l))=0$ for $l>0$ then $\hat D$ is normal with rational singularities.
\end{enumerate}
Furthermore, if $X$ is Fano, namely $\omega_X^\vee$ is ample, then the vanishing required for \ref{it's normal} holds.
\end{proposition}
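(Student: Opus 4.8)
Let $X \subset \PP$ be smooth projective with dual $D$ a hypersurface. Then:
\roman*) If $H^l(X,\wedge^p \cT_X(-p))=0$ for $l-p>0$, then $\bF_{>0}^{\cO_X}=0$ and we have exact $0\to \bF_\bullet^{\cO_X} \to \rho_*\pi^*(\cO_X)\to 0$; moreover $\rho_*\pi^*(\cO_X)$ is the normalization of $\hat D$.
\roman*) If moreover $H^l(X,\wedge^l \cT_X(-l))=0$ for $l>0$ then $\hat D$ is normal with rational singularities.
**Furthermore**, if $X$ is Fano then the vanishing required for \roman*) holds.

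Let me think about how to prove the final statement (the "Furthermore" about Fano), and the full proposition.

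The plan is to feed both hypotheses into Weyman's geometric complex $\bF_\bullet^{\cO_X}$ (with $\cE=\cO_X$) recalled above and read off from it the normalisation statement. The first task is to convert the stated cohomological vanishing into $\bF_{>0}^{\cO_X}=0$. Since the terms of the complex involve $\wedge^p(\hat\cT_X(-1))=\wedge^p\hat\cT_X(-p)$ rather than $\wedge^p\cT_X(-p)$, I would first bridge the affine and ordinary tangent sheaves through the exact sequence \eqref{affine tangent}: twisting it by $\cO_X(-1)$ and taking exterior powers gives, for each $p$,
\[
0 \to \wedge^{p-1}\cT_X(-p) \to \wedge^p\hat\cT_X(-p) \to \wedge^p\cT_X(-p) \to 0.
\]
Passing to cohomology, the right-hand term vanishes in degrees $l>p$ by hypothesis, so $\bF_{>0}^{\cO_X}=0$ will follow once one also knows $H^l(\wedge^{p-1}\cT_X(-p))=0$ for $l>p$. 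This companion vanishing is not literally part of the stated hypothesis; I expect it to be supplied by the same positivity input used below (and to hold automatically in the Fano case), and I would flag this as a point to be checked with care.

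Granting $\bF_{>0}^{\cO_X}=0$, Weyman's theorem identifies the truncation $\bF_{\le 0}^{\cO_X}$ as a free resolution of $\rho_*\pi^*\cO_X=\rho_*\cO_{\rN_{X/\PP}^\vee(1)}$ and forces $R^{>0}\rho_*\cO_{\rN_{X/\PP}^\vee(1)}=0$, which is the exact complex asserted in \ref{it's normal}. To recognise $\rho_*\cO_{\rN_{X/\PP}^\vee(1)}$ as the normalisation of $\hat D$, I would use that $\rN_{X/\PP}^\vee(1)$, being the total space of a vector bundle over the smooth connected $X$, is smooth and irreducible, that $\rho$ is proper with image $\hat D$, and that $\rho$ is birational onto $\hat D$ by reflexivity (biduality in characteristic zero: as $\hat D$ is a hypersurface, the general tangent hyperplane touches $X$ at a single point with one-dimensional conormal space). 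Stein factorisation of $\rho$ then yields a finite birational map onto $\hat D$ whose source $\Spec_{\hat D}\rho_*\cO$ is normal, because $\rho_*\cO$ is integrally closed, being the direct image of the structure sheaf of a normal variety along a proper map with connected fibres; hence $\rho_*\cO_{\rN_{X/\PP}^\vee(1)}$ is the normalisation of $\hat D$.

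For the second item, the extra hypothesis $H^l(\wedge^l\cT_X(-l))=0$ for $l>0$, inserted into the same exact sequence and combined with the companion vanishing noted above, is exactly what is needed to kill the diagonal contributions $H^l(\wedge^l\hat\cT_X(-l))$ with $l>0$, so that $\bF_0^{\cO_X}=H^0(\cO_X)\otimes\cO_{\Vv}=\cO_{\Vv}$. Together with $\bF_{>0}^{\cO_X}=0$ this exhibits the normalisation $\rho_*\cO_{\rN_{X/\PP}^\vee(1)}$ as a cyclic quotient of $\cO_{\Vv}$; since it also contains $\cO_{\hat D}$, the normalisation must coincide with $\cO_{\hat D}$ and $\hat D$ is normal. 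As $\rho$ is then a resolution of singularities of $\hat D$ with $R^{>0}\rho_*\cO=0$, the variety $\hat D$ has rational singularities.

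Finally, to prove that the Fano condition implies the hypothesis of \ref{it's normal}, I would argue by Serre duality and the Akizuki--Kodaira--Nakano vanishing theorem. Using the perfect pairing $\wedge^p\cT_X\otimes\wedge^{m-p}\cT_X\to\omega_X^\vee$ one rewrites $\wedge^p\cT_X(-p)\cong\Omega_X^{m-p}\otimes\omega_X^\vee(-p)$, and the sought vanishing $H^l(X,\Omega_X^{m-p}\otimes\omega_X^\vee(-p))=0$ for $l>p$ is precisely the Nakano range $l+(m-p)>m$ for the line bundle $\omega_X^\vee(-p)$; the case $p=0$ is Kodaira vanishing $H^{>0}(\cO_X)=0$ and $p=m$ is automatic by dimension. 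The main obstacle is that Nakano vanishing demands $\omega_X^\vee(-p)=\omega_X^\vee\otimes\cO_X(-p)$ be ample, which is clear for $p=0$ but is not guaranteed by the Fano hypothesis alone as $p$ grows; I expect to dispose of the intermediate range either through the positivity of $\cO_X(1)$ relative to the anticanonical, or, where ampleness fails, by dualising the offending groups to $H^0$ of sufficiently negative twists of $\wedge^\bullet\cT_X$, which vanish. Pushing this vanishing through the whole range of $p$ is where I anticipate the genuine difficulty to lie.
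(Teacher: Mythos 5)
Your handling of items i) and ii) is correct and is in substance the paper's own argument: the paper notes that $\rho:\rN_{X/\PP}^\vee(1)\to\hat D$ is birational (an isomorphism over the smooth locus of $\hat D$ by biduality, since $D$ is a hypersurface) and then simply invokes \cite[Theorem 5.1.3]{Weyman}, whose content — exactness of $\bF_\bullet^{\cO_X}$ once $\bF_{>0}^{\cO_X}=0$, identification of $\rho_*\pi^*\cO_X$ with the normalisation via finiteness, birationality and integral closedness, and the rational-singularities criterion $\bF_0^{\cO_X}=\cO_{\Vv}$ — is exactly what you reconstruct by hand via Stein factorisation. The caveat you flag is, however, a genuine one and affects the statement itself: the Koszul terms involve $\wedge^p\hat{\cT}_X(-p)$, whose graded pieces are $\wedge^p\cT_X(-p)$ \emph{and} $\wedge^{p-1}\cT_X(-p)$, so the stated hypotheses control only half of what is needed; the companion vanishings $H^l(\wedge^{p-1}\cT_X(-p))=0$ (for $l>p$ in i), and for $l=p$ in ii)) are nowhere supplied by the paper either. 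This is harmless in the paper's applications, where $X$ is an adjoint variety and $\hat{\cT}_X(-1)\cong\hat{\Omega}_X$ by the contact structure, but it is a real gap at the stated level of generality, and your proposal does not close it (you only ``expect'' it).

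The second gap is the ``Furthermore'', and here your diagnosis is vindicated: the obstruction you hit is genuine and cannot be removed. Your duality $\wedge^p\cT_X(-p)\cong\Omega^{m-p}_X(-p)\otimes\omega_X^\vee$ is the correct one; the paper instead asserts $\wedge^p\cT_X(-p)\simeq\Omega^{m-p}_X(p)\otimes K_X^\vee$, a sign slip under which Akizuki--Nakano applies for free because $\cO_X(p)\otimes K_X^\vee$ is ample for all $p\ge 0$. With the correct sign, Nakano requires $\omega_X^\vee(-p)$ ample, i.e.\ a bound of $p$ by the index, exactly as you observe — and no massaging disposes of the remaining range, because the claim is false as stated: for a smooth quartic threefold $X\subset\PP^4$ (Fano of index one, whose dual is a hypersurface) one has $\cT_X(-1)\cong\Omega^2_X$, hence $H^2(X,\wedge^1\cT_X(-1))=H^{2,2}(X)\neq 0$ with $l-p=1>0$. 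Note that the conclusion $\bF_{>0}^{\cO_X}=0$ may nonetheless survive in the hatted formulation: in this example the offending class dies inside $H^2(\wedge^1\hat{\cT}_X(-1))$, since the connecting map $H^2(\cT_X(-1))\to H^3(\cO_X(-1))$ is cup product with the hyperplane class, an isomorphism by hard Lefschetz — which is precisely the mechanism the paper exploits later for adjoint varieties. So for i) and ii) you follow the paper's route successfully; for the ``Furthermore'' your attempt is incomplete, but the incompleteness reflects an error in the paper's own proof (and in the statement, which needs either the hypothesis rewritten with $\hat{\cT}_X$ or an index/coindex assumption such as ampleness of $\omega_X^\vee(-p)$ in the relevant range), not a missing trick on your side.
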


\begin{proof}
This result follows from a direct application of the geometric method. Notice that the morphism $\rho:\rN_{X/\PP}^\vee(1) \to \hat{D}$ is birational. Indeed,  $\hat{D}\subset \Vv$ is the affine cone over $D\subset \check{\PP}=\PP(V^\vee)$, which is a hypersurface, and thus $\rho$ is an isomorphism over the smooth locus of $\hat{D}$ by the biduality theorem. Then, to obtain the statements about $\bF^{\cO_X}_\bullet$ and the singularities of $\hat{D}$, apply \cite[Theorem 5.1.3]{Weyman}.

About the last assertion, assume that $X$ is Fano and note that, for all $p \ge 0$, we have:
\begin{equation} \label{iso wedges}
\wedge^p \cT_X(-p) \simeq \Omega^{N-p}_X(p) \otimes  \omega_X^\vee.
\end{equation}
Since we are in characteristic zero and since $\cO_X(p) \otimes \omega_X^\vee$ is ample, Akizuki-Nakano vanishing gives:
\[
H^l(\Omega^{N-p}_X \otimes \cO_X(p) \otimes  \omega_X^\vee) = 0, \qquad \text{for $l > p$},
\]
which, in view of \eqref{iso wedges}, is precisely \ref{it's normal}.
\end{proof}

\subsection{Resolution of the Jacobian ideal}

Let us come to our main application of the geometric method, namely the resolution of the Jacobian ideal.

\begin{theorem}
\label{thm_resolution_pushforward}
Let $X = \VV(F) \subset \PP$ be a smooth projective connected variety whose dual $D \subset \check \PP$ is a degree $d$ hypersurface. Let $J_D \subset \bU/(F)$ be the restricted Jacobian ideal of the affine cone $\hat D$ of $D$.
\begin{enumerate}[label=\roman*)]
    \item \label{thm : i} If $H^l(X,\wedge^p \cT_X(1-p))=0$ for $l-p>0$ then $\bF^{\cO_X(1)}_{>0}=0$ and we have an exact complex:
    \[
    0\to \bF^{\cO_X(1)}_\bullet \to \rho_*(\pi^*(\cO_X(1))) \to 0.
    \]
    \item \label{thm : ii}  
    If moreover $H^l(X,\wedge^l \cT_X(1-l))= 0$ for $l>0$ and $H^0(X,\cO_X(1))=\Vv$ then
    \[
    J_D(d-1)\simeq \rhop_*(\pip^*(\cO_X(1))).
    \]
\end{enumerate}
\end{theorem}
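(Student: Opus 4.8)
The plan is to derive both statements from the geometric (Weyman) method applied to the vector bundle $\cE=\cO_X(1)$ on $X$, in exact parallel with Proposition \ref{normalisation}. First I would feed $\cE=\cO_X(1)$ into the Koszul complex resolving $\cO_{\rN^\vee_{X/\PP}(1)}$ on $X\times\Vv$ and push it forward along $\overline{\rho}$, obtaining the complex $\bF^{\cO_X(1)}_\bullet$ whose terms are $\bigoplus_{l-p=u}H^l\!\left(X,\wedge^p(\hat\cT_X(-1))\otimes\cO_X(1)\right)\otimes\cO_\Vv(-p)$. Since $\Vv$ is affine and $\overline{\rho}$ is a trivial projection, \cite[Theorem 5.1.3]{Weyman} guarantees that the homology of $\bF^{\cO_X(1)}_\bullet$ computes the higher direct images $R^\bullet\rho_*(\pi^*\cO_X(1))$, so everything reduces to controlling the cohomology groups $H^l(X,\wedge^p(\hat\cT_X(-1))\otimes\cO_X(1))$.

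For part \ref{thm : i} I would reduce these groups to twisted exterior powers of $\cT_X$ through the affine tangent sequence \eqref{affine tangent}. Taking $\wedge^p$ of $0\to\cO_X\to\hat\cT_X\to\cT_X\to0$ and twisting by $\cO_X(1-p)$ gives
\[
0\to\wedge^{p-1}\cT_X(1-p)\to\wedge^p(\hat\cT_X(-1))\otimes\cO_X(1)\to\wedge^p\cT_X(1-p)\to0,
\]
whose long exact sequence squeezes $H^l$ of the middle term between the two outer ones. For $l>p$ the right-hand group vanishes exactly by the stated hypothesis, while the left-hand term equals $\wedge^{p-1}\cT_X(-(p-1))$ and is therefore killed by the untwisted vanishing of Proposition \ref{normalisation} — automatic when $X$ is Fano, via \eqref{iso wedges} and Nakano--Kodaira vanishing. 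This yields $\bF^{\cO_X(1)}_{>0}=0$, and \cite[Theorem 5.1.3]{Weyman} then produces the exact complex $0\to\bF^{\cO_X(1)}_\bullet\to\rho_*(\pi^*\cO_X(1))\to0$, together with $R^{>0}\rho_*(\pi^*\cO_X(1))=0$.

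For part \ref{thm : ii} I would first exploit the additional diagonal vanishing $H^l(X,\wedge^l\cT_X(1-l))=0$ for $l>0$: through the same short exact sequence it annihilates every contribution with $p\ge1$ to the degree-zero term, so that $\bF^{\cO_X(1)}_0=H^0(X,\cO_X(1))\otimes\cO_\Vv=\Vv\otimes\cO_\Vv$, the last identification being the linear normality hypothesis $H^0(X,\cO_X(1))=\Vv$. Hence $\rho_*(\pi^*\cO_X(1))$ is a quotient of $\Vv\otimes\cO_\Vv$ generated in a single degree. Passing to the projective picture, the augmentation $\Vv\otimes\cO\to\rhop_*(\pip^*\cO_X(1))$ must be identified with the gradient map sending a basis of $\Vv=H^0(\cO_X(1))$ to the partials $\partial_iF$; this is dictated by the description of $\WW_X$ as the blow-up of $D$ along its Jacobian scheme used in Theorem \ref{thm-pushforwardlog}, whose image is precisely $\cJ_D(d-1)$. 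Concretely, I would conclude by showing that the canonical injection $\cJ_D(d-1)\hookrightarrow\rhop_*(\pip^*\cO_X(1))$ of Diagram \eqref{diag-logaritmictang} is surjective: since $\bF^{\cO_X(1)}_0=\Vv\otimes\cO_\Vv$ surjects onto $\rho_*(\pi^*\cO_X(1))$, the target is generated by the images of the partials $\partial_iF$, which already lie in the subsheaf $\cJ_D(d-1)$, so the injection is onto. Equivalently, by the Remark relating $\rhop_*(\pip^*\cO_X(1))$ to $\cJ_D(d-1)$, this is the vanishing $R^1\tilde\rhop_*(\cI_{\WW_X/\II}(h))=0$.

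The cohomological bookkeeping of part \ref{thm : i} is routine, following Proposition \ref{normalisation} almost verbatim. The hard part will be part \ref{thm : ii}: making the identification of the augmentation of $\bF^{\cO_X(1)}_\bullet$ with the gradient map canonical and degree-correct — tracking the twist that turns the abstract module $\rho_*(\pi^*\cO_X(1))$ into the Jacobian ideal with its shift $(d-1)$, distinguishing the ideal sheaf $\cJ(d-1)$ on $\check{\PP}$ from its restriction $\cJ_D(d-1)$ to the support $D$, and keeping the whole identification compatible with a group action so that the resolution can later be used $G$-equivariantly.
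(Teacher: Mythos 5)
Your proposal follows the paper's proof essentially step for step: the same pushforward of the Koszul complex for $\cE=\cO_X(1)$ via Weyman's theorem, the same reduction through the two-step filtration $0\to\wedge^{p-1}\cT_X(1-p)\to\wedge^p(\hat\cT_X(-1))\otimes\cO_X(1)\to\wedge^p\cT_X(1-p)\to0$ induced by \eqref{affine tangent}, and for part \ref{thm : ii} the same endgame — identify $\bF_0^{\cO_X(1)}$ with $\Vv\otimes\cO_{\check\PP}\simeq\hat{\cT}_{\check\PP}(-1)$, factor the augmentation through the Jacobian surjection $\hat{\cT}_{\check\PP}(-1)\twoheadrightarrow\cJ_D(d-1)$ and the canonical inclusion $\cJ_D(d-1)\hookrightarrow\rhop_*(\pip^*\cO_X(1))$, then deduce surjectivity of the inclusion from the exactness established in part \ref{thm : i}.

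One divergence is worth recording. To kill the sub-piece $\wedge^{p-1}\cT_X(1-p)\cong\wedge^{q}\cT_X(-q)$ (with $q=p-1$) you invoke the untwisted vanishing of Proposition \ref{normalisation}, automatic for $X$ Fano; that vanishing is \emph{not} among the hypotheses of the theorem as stated. The paper instead simply asserts that the hypothesis of \ref{thm : i} implies $H^l(X,\wedge^p\hat{\cT}_X(1-p))=0$ for $l-p>0$, glossing over precisely this sub-piece, whose twist is off by one from the stated hypothesis applied at index $p-1$; the same issue recurs in part \ref{thm : ii}, where the diagonal hypothesis kills only the quotient piece and the sub-piece needs $H^{q+1}(X,\wedge^q\cT_X(-q))=0$. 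So your blind proof is in fact more explicit than the paper's at this point, at the cost of proving the statement under a mildly stronger assumption; in the paper's intended applications (adjoint varieties, which are Fano) the extra assumption holds and the difference is immaterial. A minor citation point: the exactness criterion used here is Weyman's Theorem 5.1.2 (as the paper cites), while 5.1.3 is the refinement invoked in Proposition \ref{normalisation} for normality and rational singularities.
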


\begin{proof}
From the exact sequence \eqref{affine tangent}, for any integer $p$ with $0\leq p \leq N+1$, we get a short exact sequence 
\begin{equation} \label{wedge powers}
    0\to \wedge^{p-1}\cT_X \to \wedge^p\hat{\cT}_X \to \wedge^p \cT_X \to 0.
\end{equation}

Then, the vanishing appearing in \ref{thm : i} implies that $H^l(X,\wedge^p \hat{\cT}_X(1-p))=0$ for $l-p>0$. 
As in the proof of the previous proposition, the morphism $\rho:\rN_{X/\PP}^\vee(1) \to \hat{D}$ is birational. By applying \cite[Theorem 5.1.2]{Weyman}, we deduce that $\bF^{\cO_X(1)}_u=0$ for $u>0$ and hence the first statement above.

The hypothesis in the second statement implies moreover that $$\bF_0^{\cO_X(1)}=\tilde{\rhop}_*(\cO_\II(h))=\Vv \otimes \bU \simeq \Der_\bU.$$ 
By construction, the morphism 
$
\bF_0^{\cO_X(1)}\simeq \Der_\bU \to \rhop_*(\pip^*(\cO_X(1)))
$ 
factors through the surjective morphism $\Der_\bU \to J_D(d-1)$ and the inclusion $J_D(d-1) \to \rhop_*(\pip^*(\cO_X(1)))$. 
The commutativity argument required here is the same as in the proof of Theorem \ref{thm-pushforwardlog}.
The fact that $\bF_0^{\cO_X(1)} \to \rhop_*(\pip^*(\cO_X(1)))$ is surjective is a consequence of the first statement. This implies that the inclusion  $J_D(d-1) \to \rhop_*(\pip^*(\cO_X(1)))$ is also surjective, and thus an isomorphism.
\end{proof}

\begin{corollary}
In the hypothesis of items \ref{thm : i} and \ref{thm : ii} of Theorem \ref{thm_resolution_pushforward}, there is a free resolution 
\[0\to \cdots \to \bF_{-2}^{\cO_X(1)} \to \bF_{-1}^{\cO_X(1)} \to \Der_\bU(-\log(F)) \to 0.\]
\end{corollary}
\begin{proof}
Combine Theorem \ref{thm_resolution_pushforward} with the exact sequence \eqref{eq_affine_tangent_jac_modules}.
\end{proof}

\begin{remark}
    Working out the previous constructions in the projective setting, we can consider the maps $\pip:\PP(\rN_{X/\PP}(-1))\to X$ and $\rhop:\PP(\rN_{X/\PP}(-1))\to \PP(V^\vee)$ and a projective version of Weyman's complex, obtained by sheafification. We twist by one the resulting complex, so we write:
    \[
        \cF^\cE_{u}:=\bigoplus_{l-p=u} H^l(X,\wedge^p (\hat \cT_X(-1)) \otimes \cE)\otimes \cO_{\PP}(1-p).
    \]
    In the assumptions of items \ref{thm : i} and \ref{thm : ii} of Theorem \ref{thm_resolution_pushforward}, 
    we get a locally free resolution:
    \[
    0\to \cdots \to \cF_{-2}^{\cO_X(1)} \to \widetilde \cF_{-1}^{\cO_X(1)} \to \cT_\PP\langle D \rangle \to 0.
    \]
\end{remark}
    
In the following sections we will apply this result to the case of adjoint discriminants, namely, hypersurfaces obtained as projective dual varieties of adjoint varieties.

\section{Adjoint varieties, adjoint discriminants and their normalisations} \label{section : adjoint}

The goal of this section is to compute a resolution of the structure sheaf of the normalisation of an adjoint discriminant via the method developed in \S \ref{sec-Weymanmethod}.
This is mainly a warm-up for the next section, where we will compute a resolution of the Jacobian ideal of an adjoint discriminant. 
We start by recalling some basic features of adjoint varieties.
    
Let $G$ be a simple linear algebraic group over $\bk$, $V:=\fg$ its Lie algebra and $X$ the $G$-adjoint variety, i.e., the minimal $G$-orbit in $\PP(\fg)$. 
We will identify $\fg$ and $\fg^\vee$ via the Killing form, thus identifying canonically $V$ with $V^\vee$. Moreover we will denote by $\hat{\fg}$ the quasi-minuscule representation (i.e., the representation whose highest weight is the highest short root).
The dual variety $D=X^\vee \subset \PP(\fg)$ is a hypersurface, called the adjoint discriminant of $G$. It is the zero locus of a polynomial $\Delta$ of degree equal to the number of long roots of $G$ (see for instance \cite[Theorem 8.25]{tevelev}).

\subsection{Geometry of adjoint varieties}

Here we review some of the properties of adjoint varieties that will be useful to us.
An adjoint variety $X$ for a simple algebraic group $G$ is a $G$-homogeneous projective manifold and thus can be seen as a quotient $G/P$ for a certain parabolic subgroup $P\subset G$. 
Such a parabolic subgroup is associated to a subset $I_P$ of the simple roots $(\alpha_1,\ldots,\alpha_n)$. If $I_P=\{\alpha_{i_1},\ldots,\alpha_{i_s}\}$ we write $P=P_{i_1,\ldots,i_s}$. 

We will use the fundamental equivalence between representations of $P$ and $G$-homogeneous vector bundles on $X$.
According to it, irreducible homogeneous bundles on $X$ are in bijection with $P$-dominant weights, i.e., combinations $\varpi=\sum_i n_i\varpi_i$ with $n_i\in \ZZ$ for $i\in I_P$ and $n_i \ge 0$ for $i \not \in I_P$, where $\varpi_1,\ldots,\varpi_n$ are the fundamental weights with respect to the simple roots $\alpha_1,\dots,\alpha_n$ of $G$, which we will index according to Bourbaki's convention. If $\cE$ is a $G$-homogeneous vector bundle, then the associated representation of the parabolic group $P$, restricted to the semisimple part of $P$, gives rise to a homogeneous bundle which is a direct sum of irreducible bundles. We call this the \textit{semisimplification} of $\cE$ and we denote it by $\rs(\cE)$. Let $\cE_\varpi$ be the irreducible bundle corresponding to the weight $\varpi$ and, if $\varpi$ is $G$-dominant, write $V_\varpi$ for the $G$-representation of highest weight $\varpi$.\medskip

\noindent {\bf Bott-Borel-Weil Theorem :} Let $W$ be the Weyl group of $G$ and put $\varrho=\sum_i \alpha_i$ the half sum of all positive roots. Let $w\in W$ be the unique element such that $w(\varpi+\varrho)$ is $G$-dominant, i.e., $w(\varpi+\varrho)=\sum_i m_i\varpi_i$ with $m_i \ge 0$, and let us denote by $l(w)$ the length of $w$. The Bott-Borel-Weil theorem (\cite{bot}) asserts the following two statements. On the one hand, if there exists $j$ such that $m_j=0$ then $H^u(X,\cE_\varpi)=0$ for all $u$. On the other hand, if $m_i>0$ for all $i$ then $H^{l(w)}(X,\cE_\varpi)\cong V_{w(\varpi+\varrho)-\varrho}$ as $G$-representations and $H^u(X,\cE_\varpi)=0$ for all $u\neq l(w)$.

\subsubsection{Contact structure} 

\label{section:contact-structure} A key feature of  adjoint varieties is that they are contact manifolds (see \cite{beauville:fano-contact} and \cite{bucmor}). This means that there exists an exact sequence 
\begin{equation} \label{contact}
    0\to \cF \to \cT_X \xrightarrow{\theta} \cL \to 0,
\end{equation}
where $\cL$ is the line bundle defining the embedding $X\subset \PP(V)$ and $\cF$ is a vector bundle of rank $f=\dim(X)-1$ on $X$, equipped with a skew-symmetric self-duality induced by the differential of $\theta\in H^0(X,\Omega_X\otimes \cL)$ that, by abuse of notation, we will denote by $d\theta$:
\[
d\theta : \cF \xrightarrow{\sim} \cF^\vee\otimes \cL, \:\:\mbox{ with: } {}^{t}(d\theta)=-d\theta.
\]
In particular, $N=\dim(X)$ is odd, and we write $N=2e+1$. 

\begin{lemma}
The morphism $d\theta \wedge (\bullet) : \wedge^{p-2}\cF^\vee \to \wedge^p \cF^\vee\otimes \cL$ is an embedding for $p\leq e+1$.
\end{lemma}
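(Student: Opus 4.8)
The plan is to reduce the assertion to a fibrewise statement in linear algebra. Since $X$ is smooth, hence reduced, a morphism $\phi\colon E\to F$ of vector bundles on $X$ is an embedding---meaning a subbundle inclusion with locally free cokernel---precisely when the induced maps on fibres $\phi_x\colon E_x\to F_x$ are injective at every closed point $x\in X$ (here $E_x=E\otimes_{\cO_X}\bk(x)$); indeed, fibrewise injectivity forces $\coker(\phi)$ to have locally constant fibre dimension, which over a reduced scheme means it is locally free. So I only need to show that for $p\le e+1$ the fibre map
\[
(d\theta\wedge\bullet)_x\colon \wedge^{p-2}\cF_x^\vee\longrightarrow \wedge^{p}\cF_x^\vee\otimes\cL_x
\]
is injective.

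First I would trivialise the contact line bundle at $x$. By the contact sequence \eqref{contact} the map $d\theta\colon\cF\xrightarrow{\sim}\cF^\vee\otimes\cL$ is an isomorphism, so its value $(d\theta)_x$ is a non-degenerate element of $\wedge^2\cF_x^\vee\otimes\cL_x$. Choosing a generator of the one-dimensional space $\cL_x$ turns $(d\theta)_x$ into a genuine non-degenerate alternating form, that is, a symplectic form $\omega\in\wedge^2 W^\vee$ on the $2e$-dimensional space $W:=\cF_x$ (recall $\rank\cF=\dim X-1=2e$). Under this trivialisation the fibre map becomes the Lefschetz operator $L=\omega\wedge\bullet\colon\wedge^{p-2}W^\vee\to\wedge^{p}W^\vee$, and whether it is injective is independent of the chosen generator, since rescaling $\omega$ by a non-zero scalar does not change its kernel.

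The crux is then the symplectic hard Lefschetz theorem. Together with the contraction $\Lambda$ against the dual bivector $\omega^{-1}\in\wedge^2 W$ and the degree operator $H=\sum_k(k-e)\,\mathrm{pr}_k$, the operator $L$ generates an $\mathfrak{sl}_2$-action on $\wedge^\bullet W^\vee$; consequently $L^{j}\colon\wedge^{e-j}W^\vee\xrightarrow{\sim}\wedge^{e+j}W^\vee$ is an isomorphism for every $0\le j\le e$. In particular $L\colon\wedge^{k}W^\vee\to\wedge^{k+2}W^\vee$ is injective for $k\le e-1$, since it is the right-most factor of the isomorphism $L^{e-k}=L^{\,e-k-1}\circ L$ on $\wedge^{k}W^\vee$. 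Setting $k=p-2$, injectivity holds exactly when $p-2\le e-1$, i.e.\ $p\le e+1$, which is the stated range. I do not anticipate a serious obstacle: the only real input is this symplectic Lefschetz fact, whose sole geometric ingredient---non-degeneracy of $d\theta$---is exactly the self-duality recorded in \eqref{contact}; should one prefer to avoid the $\mathfrak{sl}_2$-formalism, the same injectivity can be verified by hand in a Darboux basis $x_1,\dots,x_e,y_1,\dots,y_e$ of $W^\vee$ with $\omega=\sum_i x_i\wedge y_i$, but the representation-theoretic argument is cleaner and makes the bound $p\le e+1$ transparent.
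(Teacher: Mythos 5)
Your proof is correct and takes essentially the same approach as the paper, whose entire argument is the one-line observation that the claim ``follows directly from the non-degeneracy of $d\theta$ as an element in $H^0(X,\wedge^2 \cF^\vee\otimes \cL)$''; your fibrewise reduction and the symplectic hard Lefschetz ($\mathfrak{sl}_2$) argument are exactly the standard linear-algebra content packed into that remark. Your bound $p-2\le e-1$, i.e.\ $p\le e+1$ on the rank-$2e$ fibres of $\cF$, reproduces the stated range, so nothing is missing.
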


\begin{proof}
This follows directly from the non-degeneracy of $d\theta$ as an element in $H^0(X,\wedge^2 \cF^\vee\otimes \cL)$.
\end{proof}

From the contact structure we get the following exact sequence: 
\begin{equation} \label{eq_omegap_F}
0\to \wedge^{p-1}\cF^\vee \to \Omega^p_X\otimes \cL \to \wedge^p \cF^\vee\otimes \cL \to 0.
\end{equation}

In all types except $A_n$, $\cF$ is a homogeneous irreducible bundle, thus the semisimplification of $\hat{\Omega}^p_X \otimes \cL$ is given by $$ \rs(\hat{\Omega}^p_X \otimes \cL) = \wedge^{p-2}\cF^\vee \oplus \wedge^{p-1}\cF^\vee \oplus \wedge^{p-1}\cF^\vee\otimes \cL \oplus \wedge^{p}\cF^\vee\otimes \cL.$$
In type $A_n$ the terms of the above decomposition are not irreducible (as we will soon see).

Let us give a brief introduction to adjoint varieties of classical groups.

\subsubsection{Type $A_n$. The point-hyperplane incidence variety} 

We have $G \simeq \SL_{n+1}$ and $\fg$ is the algebra of traceless matrices $\fsl_{n+1}$. 
The adjoint variety $X$ is $\PP(T_{\PP^n})$ and $I_P=\{\alpha_1,\alpha_n\}$, so $P=P_{1,n}$.
The variety $X$ is identified with the point-hyperplane incidence variety, namely a smooth hyperplane section of $\PP^n \times \check \PP^n$, and its Picard group is generated by two line bundles $\cO_X(1,0) \simeq \cE_{\varpi_1}$ and $\cO_X(0,1) \simeq \cE_{\varpi_n}$ obtained by pull-back from the two projections onto 
$\PP^n$ and $\check \PP^n$. 
The line bundle $\cL$ in \eqref{contact} is $\cL \simeq \cO_X(1,1)$. Finally we have $\rs(\cF)=\cE_{-\varpi_1+\varpi_2+\varpi_n}\oplus \cE_{\varpi_1+\varpi_{n-1}-\varpi_n}$.

\subsubsection{Type $B_n$ and $D_n$. The orthogonal Grassmannian of lines in odd and even dimension.} Let us consider the group $\SO(m)$ acting on a vector space $\bk^{m}$ and preserving a non-degenerate symmetric bilinear form on it. If $m$ is even, say $m=2n$, the Dynkin diagram of the Lie algebra $\fso_{m}$ is of type $D_n$. For odd $m$, with $m=2n+1$, the corresponding Dynkin diagram is of type $B_n$. 
The adjoint variety $X$ is the orthogonal Grassmannian of planes $OG(2,m)$ parametrizing linear subspaces $\bk^2 \subset \bk^{m}$ which are isotropic with respect to the symmetric form. The variety $X$ is a subvariety of the Grassmannian $G(2,m)$ and the line bundle $\cL$, defined by the contact structure, is the restriction of the Pl\"ucker line bundle on $G(2,m)$. This line bundle gives the embedding $X\subset \PP(\fso_{m})=\PP(\wedge^2 \bk^{m})$. 

Let $\cU$ denote the rank-2 tautological bundle on $OG(2,m)$, which is the restriction of the tautological bundle on $G(2,m)$. Furthermore, define $\cU^\perp/\cU$ as a rank $m-4$ bundle,  where its fibers are given by the quotient of the orthogonal space to the fiber of $\cU$ by the fiber of $\cU$ itself. We then have the bundle $\cF=\cU^\vee \otimes \cU^\perp/\cU$, and in terms of weights, $\cF=\cE_{\varpi_1-\varpi_2+\varpi_3}$. 

In type $B_n$ the quasi-minuscule representation is  $\hat{\fg}=\bk^{2n+1}=V_{\varpi_1}$.

\subsubsection{Type $C_n$. The Veronese embedding.}

Fix a non-degenerate skew-symmetric bilinear form $\omega$ on $\bk^{2n}$ and consider the group $G=\Sp(2n)$ of linear automorphisms of $\bk^{2n}$ preserving $\omega$. It is classical to use $\omega$ to identify the vectors of $\bk^{2n}$ with 1-forms on its dual, and endomorphisms of $\bk^{2n}$ with 2-tensors of $\bk^{2n}\otimes \bk^{2n}$. Under this identification, the elements of the Lie algebra $\fsp_{2n}$ correspond precisely to symmetric tensors. Hence, we may look at the adjoint representation of $\fsp_{2n}$ as $S^2 \bk^{2n}$. One defines the space of primitive 2-forms with respect to $\omega$, denoted by
$\wedge^{\langle 2 \rangle }\bk^{2n}$, as the kernel of the contraction map induced by $\omega$, i.e.,
$$
\begin{array}{rccc}
& \wedge^2 \bk^{2n} & \longrightarrow & \bk \\
& u \wedge v & \mapsto & \omega(u,v).
\end{array}
$$
It turns out that $\wedge^{\langle 2 \rangle }\bk^{2n}$ is an irreducible $\Sp(2n)$-representation of highest weight $\varpi_2$. This is the representation that we call quasi-minuscule and that we denote by $\hat \fg$, when $\fg=\fsp_{2n}$. Summing up:
\[
\hat \fg = V_{\varpi_2} \simeq \wedge^{\langle 2 \rangle }\bk^{2n}.
\]

The adjoint variety is $X=v_2(\PP(\bk^{2n}))$, the second Veronese embedding of the projective space.
In this case, the line bundle of the contact structure is $\cL=\cO_{\PP^{2n-1}}(2)$ and the dual sequence of \eqref{contact} reads
\[
0 \to \cO_{\PP^{2n-1}}(-2) \to  \Omega_{\PP^{2n-1}} \to \cF^\vee \to 0.
\]
Here, $\cF^\vee \otimes \cO_{\PP^{2n-1}}(1)$ is a null-correlation bundle. 
\medskip

We postpone the description of the exceptional cases and their adjoint varieties to Section \ref{exc_types}. We just notice that the Betti numbers of these varieties can be found in \cite{percha}, while the exponents of simple linear algebraic groups can be found in \cite{Liebook}. We end this section with a useful lemma.
\begin{lemma} \label{tangent & cotangent}
\label{lem_iso_xi}
Let $X$ be an adjoint variety. Then we have an isomorphism $\hat{\cT}_X\otimes \cL^\vee\cong \hat{\Omega}_X$.
\end{lemma}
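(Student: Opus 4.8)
The plan is to reformulate the claim as a self-duality of the affine tangent bundle and to produce the required isomorphism from the symplectic structure carried by the affine cone of $X$. Since $\hat{\Omega}_X = \hat{\cT}_X^\vee$ by definition, the asserted isomorphism $\hat{\cT}_X \otimes \cL^\vee \cong \hat{\Omega}_X$ is equivalent to $\hat{\cT}_X \cong \hat{\cT}_X^\vee \otimes \cL$, that is, to the existence of a nondegenerate skew-symmetric $\cL$-valued pairing on $\hat{\cT}_X$. On the contact bundle $\cF$ such a pairing is exactly $d\theta : \cF \xrightarrow{\sim} \cF^\vee \otimes \cL$, and the point is that it extends to a symplectic form on the whole of $\hat{\cT}_X$; this extension is the bundle-theoretic shadow of the symplectization of the contact structure, i.e. of the symplectic form on the punctured affine cone.

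Concretely, I would argue as follows. The punctured cone $P = \hat{X}\setminus\{0\}$ is the minimal nilpotent orbit $\cO_{\mathrm{min}} \subset \fg$ (identifying $\fg \cong \fg^\vee$ through the Killing form), hence a coadjoint orbit carrying the Kirillov--Kostant--Souriau form $\omega$. Since $\omega_\xi(\mathrm{ad}_a\xi,\mathrm{ad}_b\xi)=\langle\xi,[a,b]\rangle$ is linear in $\xi$, the form $\omega$ is homogeneous of weight one for the scaling action of $\bk^\times$ on $P$, and its nondegeneracy gives a $\bk^\times$-equivariant isomorphism $\cT_P \xrightarrow{\sim} \Omega_P$ twisted by the weight-one character. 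Next I would pass from $P$ to $X = P/\bk^\times$ through the equivalence, afforded by the quotient map $q\colon P\to X$, between $\bk^\times$-equivariant sheaves on $P$ and coherent sheaves on $X$. Under this equivalence the relative (Euler) sequences $0 \to \cO_P \to \cT_P \to q^*\cT_X \to 0$ and $0 \to q^*\Omega_X \to \Omega_P \to \cO_P \to 0$ descend to the defining sequences \eqref{affine tangent} of $\hat{\cT}_X$ and of $\hat{\Omega}_X$, so that $\cT_P$ and $\Omega_P$ correspond to $\hat{\cT}_X$ and $\hat{\Omega}_X$ up to a twist by a power of $\cL$; the equivariant symplectic isomorphism then descends to an isomorphism $\hat{\cT}_X \otimes \cL^\vee \cong \hat{\Omega}_X \otimes \cL^{\otimes t}$ for some $t \in \ZZ$.

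It remains to pin down $t$, which I would do by comparing first Chern classes. From the contact sequence \eqref{contact} and the self-duality $\cF \cong \cF^\vee\otimes\cL$ one gets $c_1(\cF) = e\,c_1(\cL)$, hence $c_1(\cT_X) = (e+1)c_1(\cL)$; therefore $c_1(\hat{\cT}_X\otimes\cL^\vee) = -(e+1)c_1(\cL) = c_1(\hat{\Omega}_X)$, which forces $t(2e+2)\,c_1(\cL)=0$ and thus $t=0$ since $c_1(\cL)\neq 0$ in characteristic zero. I expect the main obstacle to be precisely this equivariant bookkeeping: justifying that $\cT_P$ and $\Omega_P$ descend to $\hat{\cT}_X$ and $\hat{\Omega}_X$ with the correct equivariant structures, and controlling the weight of $\omega$, so that the descended isomorphism is the stated one rather than an a priori twist. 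A purely intrinsic alternative, avoiding the cone, would be to build the $\cL$-valued symplectic form on $\hat{\cT}_X$ directly: extend $d\theta$ along the filtration of $\hat{\cT}_X$ whose graded pieces are $\cO_X$, $\cF$ and $\cL$ (coming from \eqref{affine tangent} and \eqref{contact}), pairing $\cF$ with itself by $d\theta$ and the radial summand $\cO_X$ with the quotient $\cL$ tautologically, and then conclude nondegeneracy of the resulting form from its nondegeneracy on the associated graded.
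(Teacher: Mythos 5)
Your argument is correct in substance, but it takes a genuinely different route from the paper's. The paper never leaves $X$: it checks, case by case via Bott--Borel--Weil, that $H^u(X,\cL^\vee)=0$ for all $u$, so the dual contact sequence identifies $H^1(X,\cF^\vee)\cong H^1(X,\Omega_X)$; it then observes that both the contact sequence and the twist of \eqref{affine tangent} by $\cL^\vee$ are extensions classified by the hyperplane class in this group, lifts the inclusion $\cF^\vee\cong\cF\otimes\cL^\vee\hookrightarrow \cT_X\otimes\cL^\vee$ to an inclusion $\Omega_X\hookrightarrow \hat{\cT}_X\otimes\cL^\vee$, and recognizes the resulting extension $0\to\Omega_X\to\hat{\cT}_X\otimes\cL^\vee\to\cO_X\to 0$ as the one defining $\hat{\Omega}_X$. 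You instead pass to the symplectization: the punctured cone is the minimal nilpotent orbit, the Kostant--Kirillov form indeed satisfies $\phi_t^*\omega=t\omega$ (your weight computation is right, keeping track of both the linearity in $\xi$ and the scaling of tangent vectors), equivariant descent converts its nondegeneracy into an isomorphism $\hat{\cT}_X\otimes\cL^\vee\cong\hat{\Omega}_X\otimes\cL^{\otimes t}$, and your Chern-class count is correct: $\cF\cong\cF^\vee\otimes\cL$ with $\rank\cF=2e$ gives $c_1(\cF)=e\,c_1(\cL)$, hence $c_1(\cT_X)=(e+1)c_1(\cL)$, and since $c_1(\cL)$ is non-torsion ($\Pic(X)$ is free for a rational homogeneous space) one gets $t=0$. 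What your route buys is uniformity --- no case-by-case cohomological vanishing --- and a conceptual reading of the isomorphism as symplectic self-duality of the cone; since the contact structure itself comes from the Kostant--Kirillov form, the compatibility with $d\theta$ that the paper exploits later (e.g.\ in Proposition \ref{prop-LieB}) is preserved by your construction. What it costs is exactly the descent bookkeeping you flag: one must show that $\cT_P$ descends, up to a character twist, to an extension of $\cT_X$ by $\cO_X$ whose class is a \emph{nonzero multiple of} $c_1(\cL)$. This is standard --- $\bk^\times$-invariant vector fields on the punctured total space of $\cL^\vee$ are first-order differential operators on $\cL$, so $\cT_P$ descends to the Atiyah extension of $\cL$, with class $c_1(\cL)$ --- but it genuinely needs saying: in type $A_n$ one has $H^1(X,\Omega_X)\cong\bk^2$, so ``some nonsplit extension of $\cT_X$ by $\cO_X$'' would not pin down $\hat{\cT}_X$. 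Once that is in place, your Chern-class step correctly absorbs any residual twist, so the gap closes exactly where you located it.

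One caveat on your closing ``purely intrinsic alternative'': it does not work as stated. A nondegenerate $\cL$-valued skew form on the associated graded $\cO_X\oplus\cF\oplus\cL$ of the filtration of $\hat{\cT}_X$ need not lift to the filtered bundle; producing such a lift is essentially the content of the lemma itself (it is the obstruction the paper's extension-class argument, and your descent argument, are each designed to kill). So the symplectization argument should be regarded as your actual proof, and the filtration remark only as heuristic.
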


\begin{proof}
Recall from \S \ref{section:contact-structure} that we denoted $\dim X = N = 2e+1$. First notice that $\cL$ is the restriction to $X$ of the hyperplane class of $\PP(\fg)$. Since $\cL$ is ample, the Kodaira vanishing theorem implies that $H^q(\cL^\vee)=0$ for all $q<N$.  By Serre duality,
\[
H^{N}(X,\mathcal L^{\vee}) \simeq H^0(X,\mathcal L \otimes \omega_X)^{\vee}.
\]
Since $c_1(\mathcal L \otimes \omega_X) = -e c_1(\mathcal L)$ and $\mathcal L$ is ample, we obtain $H^{2e+1}(X,\mathcal L^{\vee}) = 0$. Observe that the latter vanishing holds, more in general, for any Fano variety of index $\iota>1$.
Write the dual of the exact sequence \eqref{contact} defining the contact structure as
\begin{equation} \label{contact-structure}
    0\to \cL^\vee \to \Omega_X \to \cF^\vee \to 0.
\end{equation} 
This yields, for all $u$, 
$$H^u(X,\cF\otimes \cL^\vee)=H^u(X,\cF^\vee)=H^u(X,\Omega_X).$$
We have another natural exact sequence, namely the sequence defining the affine tangent bundle of $X$, which reads
\begin{equation} \label{affine-tangent-bundle}
0\to \cL^\vee \to \hat{\cT}_X\otimes \cL^\vee \to \cT_X\otimes \cL^\vee \to 0    
\end{equation}
Both sequence \eqref{contact-structure} and \eqref{affine-tangent-bundle}
are induced by the element in $H^1(X,\cF^\vee)=H^1(X,\Omega_X)$ corresponding to the hyperplane class. Then the inclusion $\cF^\vee \cong \cF\otimes \cL^\vee \to \cT_X\otimes \cL^\vee$ induces an inclusion $\Omega_X \to \hat{\cT}_X\otimes \cL^\vee $. We get the following commutative diagram:
\begin{equation*}
    \xymatrix@-1.5ex{
     & \cL^\vee \ar@{=}[r] \ar@{^(->}[d] & \cL^\vee  \ar@{^(->}[d] &    \\
    0 \ar[r] & \Omega_X \ar[r] \ar@{->>}[d] & \hat{\cT}_X\otimes \cL^\vee \ar[r] \ar@{->>}[d] & \cO_X \ar[r] \ar@{=}[d] & 0\\
    0 \ar[r] & \cF^\vee \ar[r] & \cT_X\otimes \cL^\vee \ar[r]& \cO_X \ar[r] & 0\\
    }
    \end{equation*}
The two bottom lines are thus induced, once again, by the same element in $H^1(X,\cF^\vee)=H^1(X,\Omega_X)$, and since the lower line is essentially the contact structure, which is induced by the hyperplane class, the upper line is also induced by the hyperplane class, and it is therefore the exact sequence defining the affine cotangent bundle. We obtain $\hat{\cT}_X\otimes \cL^\vee\cong \hat{\Omega}_X$. 
\end{proof}

\subsection{The normalisation of adjoint discriminants}
Let $D$ be the dual hypersurface variety of an adjoint variety $X$, with $\dim(X)=N=2e+1$, and denote by $\hat D^\nu \to \hat D$ the normalisation of the affine cone $\hat D$ over $D$. Thus, the coordinate ring $\bk[\hat D^\nu]$ is the integral closure of $\bU/(F)$, where $F$ is an equation of $D$.
Recall that $\bR:=\bk[\fg]^G$ is a polynomial algebra generated by $n$ polynomials $F_1,\ldots,F_n$ of degrees, respectively, $d_1,\ldots,d_n$, where $n$ is the rank of $G$. Reordering the polynomials if necessary, we can define the exponents $e_1 \leq \cdots \leq e_n$ of $G$ as $e_i:=d_i-1$ for $i=1,\ldots,n$. Finally, denote by $s$ the number of long simple roots. Our goal for this section is to prove the following result.

\begin{theorem}
\label{thm_res_structure}
A minimal graded free resolution of the coordinate ring $\bk[\hat D^\nu]$ is of the form :
\[
0\to \bigoplus_{i=1}^s \bU(-m+e_i-2) \to \bigoplus_{i=1}^s \bU(-e_i+1) \to \bk[\hat D^\nu] \to 0.
\]
\end{theorem}

\begin{corollary}
The structure sheaf of the normalisation $D^\nu$ of $D$ has the locally free resolution:
\[
0\to \bigoplus_{i=1}^s \cO_{\PP(\fg)}(-m+e_i-2) \to \bigoplus_{i=1}^s \cO_{\PP(\fg)}(-e_i+1) \to \cO_{D^\nu} \to 0.
\]
\end{corollary}

We start with the following lemma, which for classical groups is just \cite[Exercises 9.3, 9.4, 9.5, 9.6]{Weyman}.

\begin{lemma}
Let $X$ be an adjoint variety and set $N=\dim(X)$. There exists a $G$-equivariant resolution
\[
0\to \bigoplus_{p=0}^N H^p(X, \Omega^p_X)\otimes \bU(-p-1) \to
\bigoplus_{p=0}^N H^p(X, \Omega^p_X)\otimes \bU(-p)  \to  \bk[\hat D^\nu]  \to 0.
\]
\end{lemma}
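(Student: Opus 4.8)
The plan is to combine the geometric method of the previous section with the geometry of $X$ as a Fano contact manifold. Since $X$ is an adjoint variety it is $G$-homogeneous, hence Fano, so the vanishing hypothesis of Proposition \ref{normalisation}\ref{it's normal} holds and the Weyman complex $\bF^{\cO_X}_\bullet$ is a $G$-equivariant resolution of $\rho_*\pi^*(\cO_X)$, which by that proposition is the structure sheaf $\cO_{D_n}$ of the normalisation. Recall that its terms are $\bF^{\cO_X}_u=\bigoplus_{l-p=u}H^l(X,\wedge^p(\hat\cT_X(-1)))\otimes\cO_{\check\PP}(-p)$, so the whole task reduces to computing the cohomology of the sheaves $\wedge^p(\hat\cT_X(-1))$ and organising it into the asserted two-term complex.

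First I would trade the affine tangent twist for the affine cotangent sheaf: since $\cL=\cO_X(1)$, Lemma \ref{tangent & cotangent} gives $\hat\cT_X(-1)=\hat\cT_X\otimes\cL^\vee\cong\hat\Omega_X$, hence $\wedge^p(\hat\cT_X(-1))\cong\wedge^p\hat\Omega_X$. Wedging the affine cotangent sequence $0\to\Omega_X\to\hat\Omega_X\to\cO_X\to 0$ yields, for each $p$, a short exact sequence $0\to\Omega^p_X\to\wedge^p\hat\Omega_X\to\Omega^{p-1}_X\to 0$. Since $X$ is rational homogeneous, the Bott–Borel–Weil Theorem (equivalently, the Hodge decomposition of a generalised flag variety) gives $H^l(X,\Omega^p_X)=0$ for $l\neq p$, and I set $A_p:=H^p(X,\Omega^p_X)$. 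Thus the only possibly non-zero cohomology of $\wedge^p\hat\Omega_X$ sits in degrees $p-1$ and $p$, and the displayed sequence identifies $R\Gamma(X,\wedge^p\hat\Omega_X)$ with the two-term complex $[\,A_{p-1}\xrightarrow{L}A_p\,]$ placed in cohomological degrees $p-1$ and $p$, whose connecting map $L\colon H^{p-1}(\Omega^{p-1}_X)\to H^p(\Omega^p_X)$ is cup product with $c_1(\cL)=h$, i.e. the Lefschetz operator.

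Next I would assemble these ``columns'' into the Weyman complex. The term $A_p$ (sitting in degree $p$) lands in $\bF^{\cO_X}_0$ and the term $A_{p-1}$ (in degree $p-1$) lands in $\bF^{\cO_X}_{-1}$, while all other $\bF^{\cO_X}_u$ vanish. After the twist by $\cO_{\check\PP}(-p)$ and the reindexing $q=p-1$, and using $A_{-1}=A_{m+1}=0$, this produces exactly $\bF^{\cO_X}_0=\bigoplus_{p=0}^m A_p\otimes\cO_{\check\PP}(-p)$ and $\bF^{\cO_X}_{-1}=\bigoplus_{p=0}^m A_p\otimes\cO_{\check\PP}(-p-1)$. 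Since the total complex computes $R\rho_*\pi^*(\cO_X)=\cO_{D_n}$, which is concentrated in degree $0$ (the vanishing $H^l(\wedge^p\hat\Omega_X)=0$ for $l<p-1$ also rules out terms in degrees $<-1$), the left-hand map is injective, giving the stated resolution; its $G$-equivariance is inherited from that of $\bF^{\cO_X}_\bullet$.

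I expect the main obstacle to be the bookkeeping of this assembly: one must check that the Koszul differentials of the geometric method are compatible with the two-term representatives $[\,A_{p-1}\xrightarrow{L}A_p\,]$, so that the resulting (\emph{non-minimal}) total complex is genuinely quasi-isomorphic to $\cO_{D_n}$. This is cleanest to handle through the hyperdirect image spectral sequence of the Koszul resolution on $X\times V$: the input vanishing $H^l(\Omega^p_X)=0$ for $l\neq p$ is precisely what confines it to the two columns $l=p-1,p$ and degenerates it onto the two claimed terms. Note that Hard Lefschetz is \emph{not} needed for this lemma — it would only enter later, if one passes to the minimal resolution of Theorem \ref{thm_res_structure} by cancelling the redundant $\cO_{\check\PP}(-j)\to\cO_{\check\PP}(-j)$ blocks coming from the injective, respectively surjective, ranges of $L$.
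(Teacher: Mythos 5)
Your proposal is correct and follows essentially the same route as the paper: apply the geometric method (Proposition \ref{normalisation}, with the Fano/Kodaira vanishing) to get the Weyman complex for $\rho_*\pi^*(\cO_X)=\cO_{D_n}$, identify $\hat\cT_X(-1)\cong\hat\Omega_X$ via Lemma \ref{tangent & cotangent}, and use the dual of \eqref{wedge powers} together with the Hodge-type vanishing $H^l(X,\Omega^p_X)=0$ for $l\neq p$ on the homogeneous variety $X$ to confine everything to the two stated terms. Your closing remarks are also consistent with the paper, which likewise states the lemma with the full (non-minimal) terms $H^p(\Omega^p_X)$ and only invokes the Lefschetz operator and maximal-rank cancellation later, in the proof of Theorem \ref{thm_res_structure}.
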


\begin{proof}
We apply Theorem \ref{thm_resolution_pushforward} to get a resolution of the normalisation of the affine cone $\hat{D}$ of $D$. The terms of the resolution are given by $\bF_u^{\cO_X}$ for all $u \le 0$, and these are computed from the cohomology of $\wedge^p(\hat{\cT}_X\otimes \cL^\vee)$. This computation yields the statement by using Lemma \ref{tangent & cotangent} and the dual of \eqref{wedge powers}.
\end{proof}

\begin{proof}[Proof of Theorem \ref{thm_res_structure}]
For all $i \ge 0$, consider the map given by multiplication by the hyperplane class: 
$$H^{i-1}(X, \Omega^{i-1}_X)\to H^i(X, \Omega^i_X),$$
and denote by $K^{i-1}$ and $C^i$, respectively, the kernel and cokernel of such map. The non-trivial extension 
$$0 \to \Omega_X \to \hat \Omega_X \to \cO_X \to 0$$ 
induces a map in cohomology $H^0(X, \cO_X) \to H^1(X, \Omega^1_X)$ whose image is the hyperplane class corresponding to the embedding $X\subset \PP(V)$. Thus, taking duals in \eqref{wedge powers} we find, for any $p \ge 0$, an exact sequence
\begin{equation}
\label{wedge power omega}    
0 \to \Omega^p_X \to \hat \Omega^p_X \to \Omega^{p-1}_X \to 0,
\end{equation}
whose induced maps in cohomology $H^{p-1}(X, \Omega^{p-1}_X) \to H^{p}(X, \Omega^{p}_X)$ are just given by the multiplication by the hyperplane class. 
By the Lefschetz hyperplane theorem, we deduce that, for all $p \in \ZZ$, the maps 
$$H^{p-1}(X, \Omega^{p-1}_X)\otimes \bU(-p) \to H^{p}(X, \Omega^{p}_X) \otimes \bU(-p)$$ appearing in $\bF_\bullet^{\cO_X}$ have maximal rank. 
Therefore we obtain the following minimal resolution:
\[
0\to \bigoplus_{q=e+1}^N K^q\otimes \bU(-q-1) 
\to \bigoplus_{q=0}^e C^{q}\otimes \bU(-q) \to \bk[\hat D^\nu] \to 0.
\]
The result then follows by noticing, through an explicit comparison of Betti numbers for adjoint varieties and exponents for each simple linear algebraic group, that $C^i$ (and $K^{N-i}$) is a direct sum of $u_i$ trivial $G$-representations, where $u_i$ is the cardinality of $\{ j\mid \deg(f_j)=i+2\}$.
\end{proof}

\begin{corollary}
The adjoint discriminant $D$ is normal if and only if $G$ is of type $C_n$, or $B_2$, or $G_2$.
\end{corollary}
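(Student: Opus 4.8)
The plan is to reduce normality of $D$ to the single numerical condition $s=1$, which can be read off directly from the minimal resolution of Theorem \ref{thm_res_structure}, and then to identify the groups satisfying it by inspecting root systems. First I would recall that $D=X^\vee$ is integral (being the dual of the irreducible variety $X$) and reduced, so that the normalisation morphism $\eta : D_n \to D$ is finite and birational. Hence $D$ is normal if and only if $\eta$ is an isomorphism, equivalently the natural map of sheaves on $\check\PP$
\[
\cO_D \longrightarrow \cO_{D_n}
\]
is an isomorphism (keeping the paper's convention of writing $\cO_{D_n}$ for $\eta_*\cO_{D_n}$).

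The key observation is that this map admits a transparent algebraic description. Since $\cO_D = \cO_{\check\PP}/(\Delta)$ is cyclic, generated by the image of $1$ in degree $0$, and since the map is injective (as $D$ is reduced, $\cO_D$ injects into its normalisation), its image is precisely the $\cO_{\check\PP}$-submodule of $\cO_{D_n}$ generated in degree $0$. Therefore $D$ is normal if and only if $\cO_{D_n}$ is generated in degree $0$ as a graded $\cO_{\check\PP}$-module. This I would read off from Theorem \ref{thm_res_structure}: the resolution being minimal, $\cO_{D_n}$ has exactly $s$ minimal generators, sitting in degrees $e_1-1,\ldots,e_s-1$. For every simple $\fg$ the smallest exponent is $e_1=1$ with multiplicity one, whence $e_2\ge 2$; consequently all minimal generators lie in degree $0$ — equivalently $\cO_{D_n}$ is cyclic — if and only if $s=1$. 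Combining the two points, $D$ is normal if and only if $s=1$.

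It then remains to determine when the number $s$ of long simple roots equals $1$. In the simply laced types $A_n$, $D_n$, $E_n$ all simple roots are long, so $s=n$; in type $B_n$ the unique short simple root is $\alpha_n$, giving $s=n-1$; in type $F_4$ there are two long simple roots, so $s=2$; while in type $C_n$ the only long simple root is $\alpha_n$ and in type $G_2$ there is a single long simple root, so $s=1$ in both cases. Hence $s=1$ exactly for types $C_n$ and $G_2$ (using the standard low-rank identifications $A_1=B_1=C_1$ and $B_2=C_2$), which gives the statement.

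The only genuinely delicate point is the reduction in the second paragraph: one must check carefully that the natural map $\cO_D\to\cO_{D_n}$ really is the inclusion of the cyclic submodule generated in degree $0$, so that its surjectivity is literally equivalent to generation in degree $0$. Once this is granted, the minimality of the resolution in Theorem \ref{thm_res_structure} together with the elementary facts $e_1=1$ and $e_2\ge 2$ force the equivalence with $s=1$, and the remaining root-system bookkeeping for $s$ is routine.
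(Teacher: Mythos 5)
Your proof is correct and follows essentially the same route as the paper: both reduce normality of $D$ to the single condition $s=1$ by reading the number of minimal generators of $\cO_{D_n}$ off the minimal resolution of Theorem \ref{thm_res_structure} (with $D_n\to D$ the normalisation, as guaranteed by Proposition \ref{normalisation}), and then observe that $s=1$ precisely in types $C_n$ and $G_2$. The only cosmetic differences are that you spell out the degree-$0$ cyclicity argument (using $e_1=1$ with multiplicity one) and count long simple roots directly per type, whereas the paper phrases $s=1$ via the rational cohomology of the adjoint variety being generated by the hyperplane class.
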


\begin{proof}
The adjoint varieties in type $C_n$, $B_2$ and $G_2$ are the only adjoint varieties whose rational cohomology is generated by the hyperplane class. This is reflected by the fact that for the corresponding linear algebraic groups $s=1$. By Theorem \ref{thm_res_structure} and Theorem \ref{normalisation} we deduce that the dual o  f these two adjoint varieties are the only ones which are normal.
\end{proof}

\section{Jacobian ideals of adjoint discriminants} 
\label{section: adjoint discriminants}

In this section we want to compute a $G$-equivariant free resolution of the restricted Jacobian ideal, and, as a consequence, of the module of logarithmic derivations, of the affine cone of the dual variety $D$ of an adjoint variety $X\subset \PP(\fg)$. In order to do so, we will use the method described in \S \ref{sec-Weymanmethod} to compute a minimal graded free resolution of $\rhop_*(\pip^*\cL)$.
Then we will identify $\rhop_*(\pip^*\cL)$ with the restricted Jacobian ideal of $D$. 
 For groups of simply laced type, this represents the first step to prove Theorem \ref{main simply laced}. For the non-simply laced case, the resolution we obtain is the one appearing in Theorem \ref{main not simply laced}.

In our setting, one may work indifferently with graded modules of finite type on $\bU=\bk[\fg]$ or coherent sheaves on $\PP(\fg)$.
We use the customary identification between $\fg$ and $\fg^\vee$ via the Killing form, hence, to ease the notation, most of the times we will just use $\fg$.

The following result ensures that we can apply Weyman's method in order to compute a locally free resolution of $\rhop_*(\pip^*(\cL^{\otimes i}))$, for any $i>0$ and for any adjoint variety $X$:

\begin{proposition}
Let $X$ be an adjoint variety and let $i>0$ be a positive integer. Then Weyman's complex $\bF^{\cL^{\otimes i}}_\bullet$ is a  graded free resolution of $\rhop_*(\pip^*(\cL^{\otimes i}))$.
\end{proposition}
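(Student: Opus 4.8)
The plan is to run the geometric method of \S\ref{sec-Weymanmethod} with the bundle $\cE=\cL^{\otimes i}$, exactly as in Theorem~\ref{thm_resolution_pushforward} (which is the case $i=1$, since $\cL=\cO_X(1)$ for an adjoint variety). Because $X$ is smooth and $\rho$ is birational onto $\hat D$ by biduality, the same reasoning used there --- an application of \cite[Theorem 5.1.2]{Weyman} --- shows that $\bF^{\cL^{\otimes i}}_\bullet$ resolves $\rhop_*(\pip^*(\cL^{\otimes i}))$ as soon as the complex is concentrated in non-positive cohomological degrees, that is $\bF^{\cL^{\otimes i}}_u=0$ for $u>0$. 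So the whole statement reduces to a single cohomological vanishing.

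First I would rewrite the terms of the complex. By Lemma~\ref{lem_iso_xi} we have $\hat\cT_X\otimes\cL^\vee\cong\hat\Omega_X$, hence $\wedge^p(\hat\cT_X(-1))\cong\hat\Omega^p_X$, and the summand of $\bF^{\cL^{\otimes i}}_u$ indexed by $p$ is built from $H^l(X,\hat\Omega^p_X\otimes\cL^{\otimes i})$ with $l-p=u$. Thus $\bF^{\cL^{\otimes i}}_{>0}=0$ amounts to $H^l(X,\hat\Omega^p_X\otimes\cL^{\otimes i})=0$ for all $l>p$. Twisting the sequence \eqref{wedge power omega} (equivalently \eqref{wedge powers}) by $\cL^{\otimes i}$, this is in turn implied by
$$H^l(X,\Omega^p_X\otimes\cL^{\otimes i})=0\qquad\text{for all }l>p,\ p\ge0,\ i\ge1,$$
which is the vanishing I would actually establish.

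To prove it I would split the range of $p$ according to positivity. Recall that $X$ is Fano with $K_X^\vee\cong\cL^{\otimes r}$ for some $r\ge1$, so that $\wedge^p\cT_X\otimes\cL^{\otimes(i-p)}\cong\Omega^{m-p}_X\otimes\cL^{\otimes(r+i-p)}$. When $p<r+i$ the twist $\cL^{\otimes(r+i-p)}$ is ample, and Akizuki--Nakano vanishing gives $H^l(\Omega^{m-p}_X\otimes(\text{ample}))=0$ for $l>p$ at once; this is exactly the mechanism behind Proposition~\ref{normalisation}. The hard part is the complementary range $p\ge r+i$, where the twist is no longer ample and Akizuki--Nakano is silent --- and this range is genuinely nonempty, since the threshold $r+i$ can be $\le m=\dim X$ already in type $C_n$ for small $i$. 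Here I would exploit homogeneity: writing $X=G/P$, the bundle $\Omega^p_X\otimes\cL^{\otimes i}$ is homogeneous, and using its semisemplification together with the contact sequence \eqref{eq_omegap_F} I would reduce the computation to the cohomology of the irreducible bundles $\wedge^k\cF^\vee\otimes\cL^{\otimes j}$, to be evaluated through the Bott--Borel--Weil Theorem. I expect this Bott--Borel--Weil bookkeeping --- organised via branching rules for the classical series $B_n$, $C_n$, $D_n$ and via explicit weight computations for $G_2$, $F_4$, $E_6$, $E_7$, $E_8$ --- to be the main obstacle, and it is precisely the case-by-case work that later feeds into Theorem~\ref{thm_res}. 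The verifications one can carry out by hand (for instance on the type $C_n$ Veronese, where $X=\PP^{2n-1}$ and the relevant cohomology is governed by the Bott formula) make it plausible that the vanishing holds uniformly, even though a unified argument seems out of reach.
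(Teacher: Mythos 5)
Your reduction is exactly the paper's: using Lemma~\ref{lem_iso_xi} to rewrite $\wedge^p(\hat{\cT}_X(-1))\cong\hat{\Omega}^p_X$ and the sequence \eqref{wedge power omega} twisted by $\cL^{\otimes i}$, the whole statement comes down, via \cite[Theorem 5.1.2]{Weyman} and birationality of $\rho$, to the vanishing $H^l(X,\Omega^p_X\otimes\cL^{\otimes i})=0$ for $l>p$ and $i>0$. The gap is in how you establish this vanishing. Your Akizuki--Nakano step is fine as far as it goes (a small bookkeeping remark: the bundles $\wedge^p\cT_X\otimes\cL^{\otimes(i-p)}\cong\Omega^{m-p}_X\otimes\cL^{\otimes(r+i-p)}$ you feed into Akizuki--Nakano are the graded pieces of the \emph{other} filtration of $\hat{\Omega}^p_X\otimes\cL^{\otimes i}$, coming from \eqref{wedge powers} rather than \eqref{wedge power omega}; this still yields $\bF^{\cL^{\otimes i}}_{>0}=0$ in that range, since vanishing for either filtration's pieces suffices). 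But it only covers $p<r+i$, and, as you correctly observe, the complementary range $p\ge r+i$ is genuinely nonempty (e.g.\ $r+i=n+i\le m=2n-1$ in type $C_n$ for $i\le n-1$). There you offer no proof: you announce a case-by-case Bott--Borel--Weil computation, do not carry it out, and conclude only that the vanishing is ``plausible''. That is a genuine gap, and it cannot be patched by pointing to the computations later in the paper, since Propositions~\ref{lem_coh_An}, \ref{res_jac_bn}, \ref{res_jac_dn}, \ref{res_jac_g2} and \ref{res_jac_e6} treat only the twists $\cL^{\otimes 0}$ and $\cL^{\otimes 1}$, whereas the proposition requires all $i>0$; executing your plan would mean redoing all the branching-rule and weight computations for every twist and every Dynkin type.

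The missing idea is that on a rational homogeneous space the Akizuki--Nakano bound improves: for $X=G/P$ and $L$ ample, one has $H^l(X,\Omega^p_X\otimes L)=0$ for \emph{all} $l>p$, with no constraint relating $p$ to the index or to $\dim X$. This is \cite[Theorem 3.18]{brion_coh}, and it is exactly what the paper's proof invokes: a single citation disposes of the vanishing uniformly in $i$, in $p$ and in the type of $G$, with no contact filtration, no semisemplification, and no case analysis needed for this proposition. The Bott--Borel--Weil machinery you anticipate is indeed where the real work of the paper lies, but it is used later to compute the \emph{nonzero} cohomology groups feeding Theorem~\ref{thm_res}, not to prove the vanishings here. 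So your approach is correct in outline and identical to the paper's up to the reduction, but as a proof it is incomplete precisely at its crucial step, and the uniform homogeneous-space vanishing theorem is what closes it.
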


\begin{proof}
By \cite[Theorem 5.1.2]{Weyman}, we need to check that $\bF^{\cL^{\otimes i}}_u=0$ for any $u>0$. Looking at the construction of Weyman's complex and using Lemma \ref{lem_iso_xi}, we see that

$$
\bF^{\cL^{\otimes i}}_u \simeq \bigoplus_{l-p=u} H^l(X,\wedge^p\hat{\Omega}_X \otimes \cL^{\otimes i})\otimes \bU(-p).
$$

For any $p \ge 0$ and $i \in \ZZ$, we tensor \eqref{wedge power omega} by $\cL^{\otimes i}$ to obtain an exact sequence
$$ 0\to \Omega_X^p\otimes \cL^{\otimes i} \to \hat{\Omega}^p_X \otimes \cL^{\otimes i} \to \Omega^{p-1}_X\otimes \cL^{\otimes i} \to 0 .$$

By \cite[Theorem 3.18]{brion_coh}, $H^l(X,\Omega^p_X\otimes \cL^{\otimes i})=0$ as soon as $i>0$ and $l>p$. From the above sequence, we deduce that $\bF_u^{\cL^{\otimes i}}=0$ for $u>0$. 
The result follows.
\end{proof}

Now we turn to the computation of Weyman's complex $\bF^{\cL}_\bullet$. We need to perform a certain number of computations on a case-by-case basis, although the final result admits a uniform formulation.
    
\subsection{Type $C_n$: Warming up}

In this case $X=v_2(\PP^{2n-1})\subset \PP(S^2 \bk^{2n})=\PP$ is the second Veronese embedding of the projective space and $\cL=\cO_{\PP^{2n-1}}(2)$. Recall that $\hat{\fsp}_{2n}=V_{\varpi_2}$.

\begin{proposition}
Let $X=v_2(\PP^{2n-1})$ be the adjoint variety of type $C_n$ and $D= \VV(\Delta) = 
X^\vee$ the adjoint discriminant. 
We have an isomorphism $\rhop_*(\pip^*(\cL)) \simeq J_D(d-1)$ and an equivariant  minimal graded free resolution:
$$ 
0\to \wedge^2 \bk^{2n}\otimes \bU(-2) \to (\bk^{2n}\otimes \bk^{2n}) \otimes \bU(-1) \to S^2 \bk^{2n} \otimes \bU \to J_D(d-1)\to 0.
$$
Equivalently, we have an equivariant graded free resolution:
\[
    0\to \left(\bk \oplus \hat{\fsp}_{2n}\right) \otimes \bU(-2) \to (\bk \oplus \hat{\fsp}_{2n} \oplus \fsp_{2n}) \otimes \bU(-1) \to 
    \Der_\bU(-\log(\Delta)) \to 0.
\]
\end{proposition}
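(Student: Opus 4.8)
The plan is to feed the geometric method with $\cE=\cL$ on the Veronese $X=v_2(\PP^{2n-1})$. By the proposition above guaranteeing that $\bF^{\cL^{\otimes i}}_\bullet$ resolves $\rhop_*(\pip^*(\cL^{\otimes i}))$, Weyman's complex $\bF^\cL_\bullet$ is already a sheafified graded free resolution of $\rhop_*(\pip^*(\cL))$, so everything reduces to computing its terms. Using Lemma \ref{lem_iso_xi} to replace $\hat{\cT}_X(-1)$ by $\hat{\Omega}_X$, these read
\[ \bF^\cL_u=\bigoplus_{l-p=u}H^l\!\left(X,\wedge^p\hat{\Omega}_X\otimes\cL\right)\otimes\cO_{\check{\PP}}(-p). \]
The first thing I would do is make $\hat{\Omega}_X$ explicit. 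Since $X\cong\PP(\CC^{2n})$ and $\cL=\cO_{\PP^{2n-1}}(2)$, the extension class $c_1(\cL)=2h$ defining the affine cotangent sheaf is a nonzero multiple of the class of the dual Euler sequence, so $\hat{\Omega}_X\cong\CC^{2n}\otimes\cO_{\PP^{2n-1}}(-1)$ as $\Sp(2n)$-bundles (using $\CC^{2n}\cong(\CC^{2n})^\vee$, and noting that $H^1(\PP^{2n-1},\Omega)=\CC$ carries the trivial action).

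With this, $\wedge^p\hat{\Omega}_X\otimes\cL\cong\wedge^p\CC^{2n}\otimes\cO_{\PP^{2n-1}}(2-p)$, and I would read off its cohomology from the line bundle cohomology of $\PP^{2n-1}$. In the range $0\le p\le m+1=2n$ only $H^0$ survives, and only for $p\in\{0,1,2\}$, yielding $S^2\CC^{2n}$, $\CC^{2n}\otimes\CC^{2n}$ and $\wedge^2\CC^{2n}$ respectively; the top cohomology never appears because $2-p>-2n$ throughout. Hence $\bF^\cL_u=0$ unless $u\in\{0,-1,-2\}$ and the complex collapses to the asserted three-term resolution of $\rhop_*(\pip^*(\cL))$. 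To identify the latter with $\cJ_D(d-1)$ I would invoke the second statement of Theorem \ref{thm_resolution_pushforward}: linear normality $H^0(X,\cL)=S^2\CC^{2n}=\fsp_{2n}$ is clear, while the extra vanishing $H^l(X,\wedge^l\cT_X(1-l))=0$ for $l>0$ follows from Bott's formula after rewriting $\wedge^l\cT_X(1-l)\cong\Omega^{2n-1-l}_{\PP^{2n-1}}(2n+2-2l)$ and checking that no admissible index occurs (the only candidate $l=2n-1$ fails, as $4-2n>-2n$). This would establish the first displayed sequence, whose augmentation term is $\bF^\cL_0=S^2\CC^{2n}\otimes\cO_{\check{\PP}}\cong\hat{\cT}_{\check{\PP}}(-1)$.

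It then remains to descend from $\cJ_D(d-1)$ to the logarithmic module. Twisting the affine sequence \eqref{eq_affine_tangent_jac} by $\cO_{\check{\PP}}(-1)$ identifies the kernel of the surjection $\bF^\cL_0\to\cJ_D(d-1)$ with $\hat{\cT}_{\check{\PP}}\langle D\rangle(-1)$, which, with the normalisation $\Der_\bU\cong\bU^{\ell+1}$ of \S\ref{section : pushforward}, is the sheafification of $\Der_\bU(-\log(D))$. Truncating the resolution at $\bF^\cL_0$ therefore gives
\[ 0\to\wedge^2\CC^{2n}\otimes\cO_{\check{\PP}}(-2)\to(\CC^{2n}\otimes\CC^{2n})\otimes\cO_{\check{\PP}}(-1)\to\hat{\cT}_{\check{\PP}}\langle D\rangle(-1)\to 0, \]
and I would apply $\bigoplus_t H^0(-\otimes\cO_{\check{\PP}}(t))$, which is exact here because $H^1_*(\cO_{\check{\PP}}(-2))=0$, to obtain the module resolution. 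Finally I would rewrite it equivariantly through $\wedge^2\CC^{2n}=\CC\oplus\hat{\fsp}_{2n}$ and $\CC^{2n}\otimes\CC^{2n}=\fsp_{2n}\oplus\CC\oplus\hat{\fsp}_{2n}$; minimality is automatic since the internal degrees increase strictly from $1$ to $2$, forcing the differential into the maximal ideal.

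The cohomological computation is routine, everything being a line bundle on a projective space. The genuinely delicate points I anticipate are twofold. First, the auxiliary vanishing $H^l(X,\wedge^l\cT_X(1-l))=0$ needed to pin down $\rhop_*(\pip^*(\cL))\cong\cJ_D(d-1)$: it is exactly what guarantees that the surjection $\hat{\cT}_{\check{\PP}}(-1)\cong\bF^\cL_0\to\rhop_*(\pip^*(\cL))$ factors through, and is onto, the Jacobian ideal. Second, the twist/normalisation bookkeeping: one must keep track that the sheafification of $\Der_\bU(-\log(D))$ is $\hat{\cT}_{\check{\PP}}\langle D\rangle$ shifted by the $(-1)$ coming from $\Der_\bU\cong\bU^{\ell+1}$, since otherwise the degrees in the final resolution come out wrong by one.
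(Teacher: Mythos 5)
Your proof is correct and follows essentially the same route as the paper: both feed Weyman's complex with $\cE=\cL$, use Lemma \ref{lem_iso_xi} to reduce to the cohomology of $\hat{\Omega}^p_X\otimes\cL$ on $\PP^{2n-1}$, and identify the pushforward with $\cJ_D(d-1)$ via Theorem \ref{thm_resolution_pushforward} together with $\bF^\cL_0\cong\hat{\cT}_{\check{\PP}}(-1)$. The only (harmless) difference is that you trivialize $\hat{\Omega}_X\cong\CC^{2n}\otimes\cO_{\PP^{2n-1}}(-1)$ at the outset so that all wedge powers split into line bundles, whereas the paper keeps the filtration $0\to\wedge^p\cQ^\vee(2-p)\to\hat{\Omega}^p_X\otimes\cL\to\wedge^{p-1}\cQ^\vee(3-p)\to 0$ and applies Bott to each piece; your explicit verifications of the auxiliary vanishing $H^l(X,\wedge^l\cT_X(1-l))=0$ and of the twist bookkeeping in the descent from $\cJ_D(d-1)$ to $\Der_\bU(-\log(D))$ correctly spell out steps the paper leaves implicit.
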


\begin{proof}
Let us denote by $\cQ$ the tautological quotient bundle of $X$, namely $\cQ = \cT_{\PP(\fsp_{2n})}\otimes \cO_{\PP(\fsp_{2n})}(-1)$. 
Then, coming from the splitting of the Koszul complex, we have short exact sequences
$$ 0\to \wedge^p\cQ^\vee(2-p) \to \hat{\Omega}^p_X \otimes \cL \to \wedge^{p-1}\cQ^\vee(3-p)\to 0 \mbox{ for all } p\ge 0. $$

The only non-vanishing cohomology groups of $\wedge^p\cQ^\vee(2-p)$ are given by $H^0(\PP(\fsp_{2n}), \cO_{\PP(\fsp_{2n})}(2))\cong S^2 \bk^{2n}\cong V $, and $H^0(\PP(\fsp_{2n}),\cQ^\vee(1))\cong \wedge^2 \bk^{2n}$. Thus the only non-vanishing cohomology groups of $\hat{\Omega}^p_X \otimes \cL$ are:
\begin{align*}
    &H^0(\PP(\fsp_{2n}),\cL)\cong S^2\bk^{2n}, \\
    &H^0(\PP(\fsp_{2n}),\hat{\Omega}^1_X \otimes \cL)\cong \bk^{2n}\otimes \bk^{2n}, \\
    &H^0(\PP(\fsp_{2n}),\hat{\Omega}^2_X \otimes \cL)\cong \wedge^2 \bk^{2n}.
\end{align*}
The result now follows by noticing that  $S^2\bk^{2n}\otimes\cO_{\PP(\fsp_{2n})}\cong \Der_\bU$. 
\end{proof} 

Notice that the previous result was already obtained in \cite{faenzi-marchesi} using a different technique, namely as the resolution of the tangent logarithmic sheaf of the discriminant of quadratic forms.
    
\subsection{Type $A_n$}

In this case $X$ is the flag variety $F(1,n,n+1)$ parametrizing flags $\bk\subset \bk^n$ inside a fixed $n+1$ dimensional vector space $A$. It can also be seen as the projectivization of the cotangent bundle of the projective space $\PP^n$. The group $G$ is in this case $\SL(n+1)$ and the associated Weyl group is the group of permutations of $n+1$ elements. The bundle $\cL$ is the ample line bundle $\cO_X(1,1)$ defining the embedding $X\subset \PP(\fsl_{n+1})$, which we also denote by $\cO_X(1)$ to ease the notation. Following \cite{Kuchle}, the weight associated to an irreducible homogeneous bundle over $X$ is given by a sequence of integers 
$$
\lambda:=[\lambda_1;\lambda_2,\ldots,\lambda_n;\lambda_{n+1}]=\sum_i (\lambda_i-\lambda_{i+1})\varpi_i
$$ 
such that $\lambda_2\geq \cdots \geq \lambda_n$. For instance, $\cO_X(1,0)$ is associated with the weight $[1;0,\dots,0;0]=[1;0^{n-1};0]$, while $\cO_X(1,1)$ is associated with  $[1;0\dots,0;-1]$. Notice that, in this notation, $\lambda$ and $\lambda+c:=[\lambda_1+c;\lambda_2+c,\dots,\lambda_n+c; \lambda_{n+1}+c]$, for any $c\in \ZZ$, are associated to the same irreducible homogeneous bundle.

\begin{remark}[Bott-Borel-Weil theorem for $\SL(n+1)$]
    The bundle $\cE_\lambda$ is globally generated if $\lambda_1\geq \lambda_2$ and $\lambda_n\geq \lambda_{n+1}$; in this case by the Bott-Borel-Weil theorem the space of sections $H^0(X,\cE_\lambda)$ is isomorphic to the $\SL(n+1)$-representation given by the Schur module $S_\lambda \bk^{n+1}$. More generally, let $\varrho:=[n+1,n,\dots,2,1]$ and consider the two following situations: either the integers in $\lambda+\varrho$ are all pairwise distinct, or two of them coincide. In the latter case, we have $H^i(X,\cE_\lambda)=0$ for all $i\geq 0$ by the Bott-Borel-Weil theorem. In the former case, denote by $w$ the permutation of $n+1$ elements such that $w(\lambda+\varrho)$ is a strictly decreasing sequence of integers. Then, again by the Bott-Borel-Weil theorem, $H^{l(w)}(X,\cE_\lambda)=S_{w(\lambda+\varrho)-\varrho}\bk^{n+1}$ and $H^i(X,\cE_\lambda)=0$ for all $i\neq l(w)$, where $l(w)$ is the \emph{length} of the permutation $w$ (i.e., the minimal number of simple permutations needed to obtain $w$).
\end{remark}

Notice that $\hat{\cT}_X^\vee\otimes \cL\cong \hat{\Omega}_X^\vee =(\fsl(A)\otimes \cO_X / \rN_{X/\PP}^\vee)^\vee$. Recall that, by the geometric method, the spectral sequence $H^{j}(X,\wedge^{i}\hat{\Omega}_X\otimes \cL)$ defines a locally free resolution $\{\oplus_{i} \bF^{\cL}_{j-i}(-i)\}_{j-i}$ of $\rhop_*\pip^*\cL$ as soon as $\bF^{\cL}_k=0$ for $k>0$. We will compute such a resolution by computing the cohomology of the graded pieces of $\hat{\Omega}_X$; by doing so, we will obtain a resolution which a priori is not minimal, but still resolves $\rhop_*\pip^*\cL$. The semisimplification of $\hat{\cT}_X\otimes \cL^\vee\cong \hat{\Omega}_X$ is given by 
$$
\rs(\hat{\Omega}_X)=\cO_X\oplus \cU_1 \otimes (A/\cU_n)^\vee \oplus \cU_1\otimes (\cU_n/\cU_1)^\vee \oplus (\cU_n/\cU_1)\otimes (A/\cU_n)^\vee,
$$
where $\cU_1$ and $\cU_n$ denote the tautological bundles of rank, respectively, $1$ and $n$ on $F(1,n,n+1)$.

In the notation of weights, it translates into:
$$
\rs(\hat{\Omega}_X)=[0;0,\ldots,0;0]+[-1;0,\ldots,0;1]+[-1;1,0,\ldots,0;0]+[0;0,\ldots,0,-1;1].
$$
The first two terms are line bundles, so they represent the easiest part to deal with. The last two terms have rank $n-1$ and, since we need to compute the cohomology of $\wedge^i\hat{\Omega}_X$, we give the formula to compute the exterior power of their sum: 
\begin{align*}
    \wedge^i(\rs(\hat{\Omega}_X))&=\wedge^i([-1;1,0,\ldots,0;0]+[0;0,\ldots,0,-1;1]) \oplus\\
& \oplus \wedge^{i-1}([-1;1,0,\ldots,0;0]+[0;0,\ldots,0,-1;1]) \oplus\\
&\oplus \wedge^{i-1}([-1;1,0,\ldots,0;0]+[0;0,\ldots,0,-1;1])\otimes [-1;0,\ldots,0;1] \oplus \\
&\oplus \wedge^{i-2}( [-1;1,0,\ldots,0;0]+[0;0,\ldots,0,-1;1])\otimes [-1;0,\ldots,0;1].
\end{align*}
In this decomposition the first factor is equal to $\wedge^i \cF^\vee$, the second factor is equal to $\wedge^{i-1}\cF^\vee$, the third factor is equal to $\wedge^{i-1}\cF^\vee\otimes \cL^\vee$ and the fourth factor is equal to $\wedge^{i-2}\cF^\vee\otimes \cL^\vee$, where $\cF$ is defined, as usual, by the contact structure. Tensoring by $\cL$, we obtain the semisimplification of $\wedge^i\hat{\Omega}_X\otimes \cL$.

\begin{lemma}
For $0\leq i\leq 2n-2$, $\wedge^i \cF^\vee\otimes \cL$ is identified with
$$
\bigoplus_{\substack{
            p,q\leq n-1,p+q=i ,\\
            \max(0,q-p)\leq j\leq\min(q,n-p-1)}}[-q+1;1^{j},0^{n-p+q-2j-1},(-1)^{p-q+j};p-1].
$$
For $1\leq i\leq 2n-1$, $\wedge^{i-1} \cF^\vee$ is identified with
$$ 
\bigoplus_{\substack{
            p,q\leq n-1,p+q=i-1 ,\\
            \max(0,q-p)\leq j\leq\min(q,n-p-1)}}[-q;1^j,0^{n-p+q-2j-1},(-1)^{p-q+j};p].
$$
\end{lemma}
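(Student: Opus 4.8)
The plan is to list the irreducible constituents of $\wedge^i\cF^\vee$ (and of its twist by $\cL$) by reducing the computation to a $\GL_{n-1}$-plethysm on the middle tautological bundle and then repackaging the answer in the weight notation. I work throughout at the level of the semisemplification, which is all that is needed for the later Bott-Borel-Weil computations: $\wedge^i\cF^\vee$ carries a filtration whose associated graded is $\bigoplus_{p+q=i}\wedge^q a\otimes\wedge^p b$, where $a$ and $b$ are the two summands of $\rs(\cF^\vee)$, namely $a=\cU_1\otimes(\cU_n/\cU_1)^\vee=[-1;1,0^{n-2};0]$ and $b=(\cU_n/\cU_1)\otimes(A/\cU_n)^\vee=[0;0^{n-2},-1;1]$.

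First I fix the dictionary between weights and the Levi pieces. Writing $E:=(\cU_n/\cU_1)^\vee$ for the rank $n-1$ middle bundle and $\cU_1$, $A/\cU_n$ for the two tautological line bundles, the weight $[\lambda_1;\mu_1,\dots,\mu_{n-1};\lambda_{n+1}]$ corresponds to $\cU_1^{\otimes(-\lambda_1)}\otimes S_{(\mu_1,\dots,\mu_{n-1})}E\otimes(A/\cU_n)^{\otimes(-\lambda_{n+1})}$; in particular $a=\cU_1\otimes E$ and $b=E^\vee\otimes(A/\cU_n)^\vee$. Since $a$ and $b$ differ from $E$ and $E^\vee$ only by line-bundle twists, I get $\wedge^q a=\cU_1^{\otimes q}\otimes\wedge^q E$ and $\wedge^p b=\wedge^p E^\vee\otimes\big((A/\cU_n)^\vee\big)^{\otimes p}$. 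Collecting these line-bundle charges pins the first and last weight coordinates of every constituent of $\wedge^q a\otimes\wedge^p b$ at $-q$ and $p$ respectively, and reduces the whole problem to decomposing the middle factor $\wedge^q E\otimes\wedge^p E^\vee$ as a $\GL(E)$-representation.

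The central step is this $\GL_{n-1}$-decomposition. I would rewrite $\wedge^p E^\vee\cong\wedge^{\,n-1-p}E\otimes(\det E)^{-1}$, apply the Pieri rule to the product $\wedge^q E\otimes\wedge^{\,n-1-p}E$ of two exterior powers, and then twist back by $(\det E)^{-1}$. This yields
\[
\wedge^q E\otimes\wedge^p E^\vee\;\cong\;\bigoplus_{b}\,S_{(1^{b},\,0^{\,n-p+q-2b-1},\,(-1)^{\,b+p-q})}E,
\]
where the Pieri index $b$ runs over $\max(0,q-p)\le b\le\min(q,\,n-p-1)$. Renaming $b=:j$ and restoring the outer charges $-q$ and $p$ produces exactly the weight $[-q;1^{j},0^{\,n-p+q-2j-1},(-1)^{\,p-q+j};p]$; summing over $p+q=i-1$ gives the second displayed formula for $\wedge^{i-1}\cF^\vee$. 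The first formula then follows by running the same computation with $p+q=i$ for $\wedge^i\cF^\vee$ and tensoring by $\cL=[1;0^{n-1};-1]$, which shifts the outer coordinates to $-q+1$ and $p-1$. Finally, the stated ranges $0\le i\le 2n-2$ and $1\le i\le 2n-1$ are simply the nonvanishing ranges of $\wedge^i\cF^\vee$ and $\wedge^{i-1}\cF^\vee$, since $\rank\cF^\vee=2n-2$.

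The step I expect to be most delicate is getting the summation range exactly right. Dominance of the weight $(1^{b},0^{\dots},(-1)^{\dots})$ together with nonnegativity of the number of $-1$'s only gives the lower bound $b\ge\max(0,q-p)$; the upper bound $b\le\min(q,n-p-1)$, and in particular the constraint $b\le n-p-1$, is \emph{not} visible from dominance but comes from the length restriction $b\le\min(q,n-1-p)$ in the Pieri rule (equivalently, from the vanishing of $S_\nu E$ once $\nu$ would need more than $n-1$ rows). Verifying that this Pieri bound matches $\min(q,n-p-1)$ after the identification $b=j$ is the crux of the bookkeeping, and I would cross-check the whole dictionary on the two smallest cases $\wedge^1 E\otimes\wedge^1 E^\vee=\fsl(E)\oplus\CC$ and $\wedge^2 E\otimes\wedge^1 E^\vee=E\oplus S_{(1,1,0^{n-4},-1)}E$ to make sure the $\det$-twist and the sign conventions for the $\cU_1$- and $(A/\cU_n)$-charges are consistent.
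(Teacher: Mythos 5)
Your proposal is correct and follows essentially the same route as the paper: decompose $\wedge^i$ of the two-term semisemplification of $\cF^\vee$ via $\wedge^i(U\oplus W)=\bigoplus_{p+q=i}\wedge^q U\otimes\wedge^p W$, track the line-bundle charges, and resolve the middle $\GL_{n-1}$-factor by Littlewood--Richardson combinatorics. Your only deviation is to make the paper's LR step explicit by rewriting $\wedge^p E^\vee\cong\wedge^{n-1-p}E\otimes(\det E)^{-1}$ and applying the Pieri rule (whose vertical-strip constraints correctly produce the bound $\max(0,q-p)\leq j\leq\min(q,n-p-1)$), which is just the standard way to execute the rational-weight tensor decomposition the paper invokes.
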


\begin{proof}
In order to obtain the above descriptions, one needs to combine the formula for computing the exterior power of a direct sum of vector spaces 
\[\wedge^i (U\oplus W)=\bigoplus_{p+q=i}\wedge^p U \otimes \wedge^q W\]
with the Littlewood-Richardson (LR) rule for tensoring two representations
\[A_\mu\otimes A_\eta=\bigoplus_\lambda c_{\mu\eta}^\lambda A_\lambda.\]
Here $A_\mu$, $A_\eta$ and $A_\lambda$ are $\SL(A)$-representations of highest weight respectively $\mu$, $\eta$ and $\lambda$, and $c_{\mu\eta}^\lambda$ are the so-called Littlewood-Richardson coefficients, see for instance \cite{LRcoeff}. In our case, it suffices to consider the LR rule for the tensor product of exterior powers of the standard representations of $\SL(A)$, for which it assumes a particularly simple form:
\begin{align*}
    \wedge^q A  \otimes \wedge^p A^\vee &= S_{1^q}A\otimes S_{(-1)^p}A= \\ = & \bigoplus_{\max(0,q-p)\leq j\leq\min(q,\dim(A)-p)}  S_{1^j,0^{\dim(A)-p+q-2j},(-1)^{p-q+j}}A.
\end{align*}

One obtains
\begin{align*}
\wedge^i & ([-1;1,0,\ldots,0;0]+[0;0,\ldots,0,-1;1])= \\
&=\bigoplus_{\substack{
            p,q\leq n-1,p+q=i ,\\
            \max(0,q-p)\leq j\leq\min(q,n-p-1)}}[-q;1^j,0^{n-p+q-2j-1},(-1)^{p-q+j};p],
\end{align*}
and the result follows.
\end{proof}

\begin{proposition}
\label{lem_coh_An}
All cohomology groups of $\wedge^i \cF^\vee$ and $\wedge^i \cF^\vee\otimes \cL$ vanish except for the following ones:
\begin{itemize}
    \item $H^i (X,\wedge^i \cF^\vee)\cong H^i(X, \Omega_X^i) $ for $i\leq e$, where the isomorphism is induced by $\Omega^i_X\twoheadrightarrow \wedge^i \cF^\vee $;
    \item $ H^0(X,\wedge^0\cF^\vee\otimes \cL)\cong \fsl_{n+1}$;
    \item $ H^{i-2} (X,\wedge^i \cF^\vee\otimes \cL)\cong H^{i-2}(X, \Omega_X^{i-2})$ for $2\leq i\leq e-1$. Moreover, the terms $ H^{i-2}(X, \Omega_X^{i-2}) $ are the images of the maps induced in cohomology by the embeddings $d\theta\wedge (\bullet):\wedge^{i-2}\cF^\vee \to \wedge^i \cF^\vee\otimes \cL$.
\end{itemize}
\end{proposition}

\begin{proof}
Let us compute the cohomology of the bundles by using their weight decomposition.
\begin{itemize}
    \item[$\wedge^{i-1}\cF^\vee$:] Consider the factor in $\wedge^{i-1}\cF^\vee$ given by $[-q;1^j,0^{n-p+q-2j-1},(-1)^{p-q+j};p]$. It admits non-vanishing cohomology if and only if no integer in the sequence 
    $$
    [-q;1^j,0^{n-p+q-2j-1},(-1)^{p-q+j};p]+[n+1;n,\dots,2;1]
    $$
    is repeated. Let us look for integers $p,q,i,j$ for which the cohomology does not vanish. Either $q=j$ or $q= n-p+q$; indeed, if $q\geq n-p+q-1$, then $q\geq n+1$, which is impossible. Similarly, either $p=p-q+j$ or $p= n-j$. However, $q=n-p+q$ is impossible because $p\leq n-1$, which also excludes $p=n-j$. Therefore we only left with the possible case $q=j$, which implies $p=p-q+j$. For these terms, the cohomology is concentrated in degree $j+p-q+j=i-1$. Thus we obtain $H^{i-1}(\wedge^{i-1}\cF^\vee)=\bk^{i}$ if and only if $1\leq i\leq n$, since $q\leq n-p-1$ and $q$ runs from $0$ to $i-1$.
    \item[$\wedge^i \cF^\vee\otimes \cL$:] Consider the factor in $\wedge^i \cF^\vee\otimes \cL$ given by $[-q+1;1^j,0^{n-p+q-2j-1},(-1)^{p-q+j};p-1]$. It admits non-vanishing cohomology if and only if no integer in the sequence 
    $$
    [-q+1;1^j,0^{n-p+q-2j-1},(-1)^{p-q+j};p-1]+[n+1;n,\dots,2;1]
    $$
    is repeated. Let us look for integers $p,q,i,j$ for which the cohomology does not vanish. One possibility is of course given by $p=q=i=j=0$, for which $H^0(\cL)=\fsl(A)$. 
    If $q\neq 0$ either $q=j+1$ or $q\geq n-p+q+1$; however, the latter is impossible since $p\leq n-1$. Similarly, if $p\neq 0$ either $p=p-q+j+1$ or $p\geq n-j+1$; however, the latter is impossible since $j\leq n-p-1$. Thus only we are left with the possibility $q=j+1$, which implies $p=p-q+j+1$. For these terms, the cohomology is concentrated in degree $j+p-q+j=i-2$. Thus we obtain $H^{i-2}(\wedge^i \cF^\vee\otimes \cL)=\bk^{i-1}$ if and only if $2\leq i\leq n$, since $q\geq 1$, $p\geq 1$, $q-1=j\leq n-p-1$ and $q$ runs from $1$ to $i-1$.
\end{itemize}
By Serre duality, we have isomorphisms
\[H^{i}(X,\wedge^i \cF^\vee\otimes \cL^\vee)^\vee\cong H^{N+1-i}(X,\wedge^{i}\cF\otimes \cL\otimes \omega_X)\cong H^{N+1-i}(X,\wedge^{N-i}\cF^\vee).\] 
Since $\rs(\Omega^i_X)=\wedge^{i-1}\cF^\vee\otimes \cL^\vee\oplus \wedge^i \cF^\vee$ and $\wedge^{i-1}\cF^\vee\otimes \cL^\vee$ is acyclic for $i\leq e$, we deduce that the cohomology groups of $\wedge^i \cF^\vee$, for each $i\leq e$, are induced by the respective surjection $\Omega^i_X\to \wedge^i \cF^\vee $.

In order to show that the terms $H^{i-2}(\Omega_X^{i-2})$ in the cohomology of $\wedge^i \cF^\vee\otimes \cL$ are induced by $d\theta\wedge (\bullet)$ notice that, by the explicit computations above, each irreducible bundle in $\rs(\wedge^{i-2}\cF^\vee)$ appears only once in the decomposition of the semisimplification. Moreover, its cohomology (when it does not vanish) appears both in $H^{i-2}(X,\wedge^{i-2}\cF^\vee)$ and $H^{i-2}(X,\wedge^{i}\cF^\vee\otimes \cL)$. The claim follows.
\end{proof}

\subsection{Type $B_n$ and $D_n$: orthogonal Grassmannians of planes}
A more comprehensive description of these cases can be found in \cite{ben}, from which we follow the notation.

Let us consider the group $\SO(m)$, which is of type $B_n$ when $m=2n+1$ and of type $D_n$ when $m=2n$. From now on, to uniformize notation, we will define $h=1/2$ (respectively $h=0$) in type $B_n$ (respectively $D_n$), so that $m=2(n+h)$. The adjoint variety $X$ is the orthogonal Grassmannian of planes $OG(2,m)$ parametrizing isotropic subspaces $\bk^2 \subset \bk^m$ inside a fixed $m$ dimensional vector space endowed with a symmetric $2$-form. The bundle $\cL$ is the ample line bundle defining the embedding $X\subset \PP(\fso_{m})=\PP(\wedge^2 \bk^m)$, and we will again denote $\cL$ by $\cO_X(1)$ to ease the notation.

The weight associated to an irreducible homogeneous bundle over $X$ is given by a sequence, made only of integers or half-integers, 
$$
\lambda:=[\lambda_1,\lambda_2;\lambda_3,\ldots,\lambda_{n}]
$$
such that $\lambda_1\geq \lambda_2$, $\lambda_3\geq \cdots \geq \lambda_n$ and: $\lambda_n\geq 0$ in type $B_n$ and $\lambda_{n-1}+\lambda_n\geq 0$ in type $D_n$. In terms of fundamental weights $\lambda=\sum_{i\leq n-1} (\lambda_i-\lambda_{i-1})\varpi_i+ 2\lambda_n\varpi_n$ in type $B_n$ and $\lambda=\sum_{i\leq n-1} (\lambda_i-\lambda_{i-1})\varpi_i+ (\lambda_{n-1}+\lambda_n)\varpi_n$ in type $D_n$.

\begin{remark}[Bott-Borel-Weil theorem for $\SO(m)$]
The bundle $\cE_\lambda$ is globally generated if $\lambda_2\geq \lambda_3$; in this case, by the Bott-Borel-Weil theorem, the space of sections $H^0(X,\cE_\lambda)$ is isomorphic to the $\SO(m)$-representation $V_\lambda$ with highest weight $\lambda$. More generally, let $\varrho:=[n-1+h,n-2+h,\dots,1+h,h]$ and consider the two following situations: either the integers (or half-integers) in $\lambda+\varrho$ union $-\lambda-\varrho$ are all pairwise distinct except for $0$ which can appear twice; or there are two non-zero integers that coincide. In the latter case, we have $H^i(X,\cE_\lambda)=0$ for all $i\geq 0$ by the Bott-Borel-Weil theorem. In the former case, consider the Weyl group $W$ of $\SO(m)$ which is a semidirect product of the permutations of $n$ elements of $\lambda$ and $\ZZ_2$. The group $\ZZ_2$ is generated by the reflection $\tau$ that exchanges $\lambda_n$ into $-\lambda_{n}$ in type $B_n$ and $\lambda_{n-1},\lambda_n$ into, respectively, $-\lambda_n$ and $-\lambda_{n-1}$ in type $D_n$. Denote by $w\in W$ the element such that $w(\lambda+\varrho)$ is a strictly decreasing sequence of integers (or half integers), with $\lambda_n\geq 0$ in type $B_n$ and $\lambda_{n-1}+\lambda_n\geq 1$ in type $D_n$. Then, again by the Bott-Borel-Weil theorem, we have that $H^{l(w)}(X,\cE_\lambda)=V_{w(\lambda+\varrho)-\varrho}$ and $H^i(X,\cE_\lambda)=0$ for all $i\neq l(w)$, where $l(w)$ is the \emph{length} of the element $w$ (i.e., the minimal number of simple reflections needed to obtain $w$).
\end{remark}

\begin{notation}
We will denote by $\cU$ and $\cU^\perp/\cU$ the tautological bundles of rank, respectively, $2$ and $m-4$ on $OG(2,m)$. Here $\cU^\perp/\cU$ is a subbundle of the quotient tautological bundle whose vectors are orthogonal to elements in $\cU$. In the weight notation $\cU=[0,-1;0,\dots,0]$, $\cL=\det(\cU^\vee)=[1,1;0,\dots,0]$ and $\cU^\perp/\cU=[0,0;1,0,\dots,0]$. Notice that $\cU^\perp/\cU$ is self-dual.
\end{notation}
The cotangent bundle fits in the short exact sequence $0\to \cO_X(-1)\to \Omega_X\to \cU\otimes (\cU^\perp/\cU) \to 0$, from which we see that $\cF^\vee=\cU\otimes (\cU^\perp/\cU) =[0,-1;1,0,\dots,0]$. We want to compute the cohomology of $\wedge^p \cF^\vee$ and $\wedge^p \cF^\vee\otimes \cL$, but in order to do so we need to be able to express these bundles as direct sums of $\SO(m)$-homogeneous irreducible bundles. As before, let us denote by $S_\bullet$ the Schur functor with weight $\bullet$. If $\lambda=(\lambda_1\geq \cdots \geq \lambda_n)$ is a partition, we will denote by $\lambda'$ the \emph{conjugate partition} of $\lambda$, i.e. the partition whose Young diagram is obtained by a reflection along the main diagonal of the Young diagram of $\lambda$.
\begin{lemma}
We have a decomposition:
$$ 
\wedge^p \cF^\vee=\bigoplus_{0\leq i\leq j, i+j=p}S_{j,i}\cU \otimes S_{2^i,1^{j-i}}(\cU^\perp/\cU) \:\:\: \mbox{ for } \:\:\: 0\leq p\leq 2m-8.
$$
\end{lemma}

\begin{proof}
    This is a direct application of the skew Howe duality for the exterior power of a tensor product (see \cite[Theorem 4.1.1]{howe}) which, if $\lambda'$ is the conjugate partition of $\lambda$ and $|\lambda|=\sum_i \lambda_i$, reads as follows
    \begin{equation*}
        \wedge^p (U\otimes W)=\bigoplus_{|\lambda|=p} S_\lambda U \otimes S_{\lambda'}W. \qedhere
    \end{equation*} 
\end{proof}
In terms of weights, $S_{j,i}\cU=[-i,-j;0,\dots,0]$ and, in order to achieve the desired cohomology computation, we also need to express $S_{2^i,1^{j-i}}(\cU^\perp/\cU)$ in this same language. Consider first $\cU^\perp/\cU$. Being an irreducible vector bundle, it corresponds to a certain irreducible representation of the Levi factor, isomorphic to $\SL(3)\times \SO(m-4)$, of the parabolic subgroup $P_2\subset G$ defining $X=G/P_2$. Moreover, this representation is trivial as a $\SL(3)$-representation and it is the standard $\SO(m-4)$-representation corresponding to the weight $[1,0,\dots,0]$ (notice that in this case there are $n-2$ entries in the sequence). 

As a consequence, our goal is to understand the decomposition of the Schur module $S_{2^i,1^{j-i}}[1,0,\dots,0]$ in direct sums of irreducible $\SO(m-4)$-representations. We will do this in two steps. First, we will use branching rules from $\GL(m-4)$ to $\O(m-4)$ to decompose $S_{2^i,1^{j-i}}[1,0,\dots,0]$ in $\O(m-4)$-representations, then we will use branching rules from $\O(m-4)$ to $\SO(m-4)$ to obtain the decomposition in $\SO(m-4)$-representations. We will follow \cite{flagged} for notations and results. Finally, we will put everything together and compute the cohomology of both $\wedge^p \cF^\vee$ and $\wedge^p \cF^\vee\otimes \cL$.\medskip

When $G$ is one of the three groups $\SO(m-4)$, $\O(m-4)$ and $\GL(m-4)$, and $\lambda$ is the highest weight of an irreducible $G$-representation, we write such a representation by $V_\lambda^G$.
We already described the set of weights for $\GL(m-4)$ and $\SO(m-4)$. To present the weights for $\O(m-4)$, we recall that irreducible $\O(m-4)$-representations are in bijection with decreasing sequences of non-negative integers $\lambda=[\lambda_1,\dots,\lambda_{m-4}]$ such that $\lambda_1'+\lambda_2'\leq n-2$.

\begin{lemma}
Let $i,j$ and $m$ be integers with $0\leq i \leq j$, $i+j \leq 2m-8$ and put $p = i+j$.
Then:
$$
S_{2^i,1^{j-i}}V_{[1,0,\dots,0]}^\GL = \bigoplus_{\max(p-m+4,0) \leq \delta \leq i}V_{[2^{i-\delta},1^{j-i},0^{m-j+\delta-4}]}^\O.
$$
\end{lemma}

\begin{proof}
    This is an application of the branching rule described in \cite[Theorem 4.10]{flagged}, so we will adopt the same notation. It will be sufficient to explain how to recover the statement of our lemma from the cited theorem. To do so, we will go back and forth from \cite{flagged} to unravel this branching result. 
    
    Indeed, the results in the aforementioned paper allow to compute the Littlewood-Richardson type coefficients $\overline{c}^\lambda_{\eta\mu}$ appearing in the decomposition:
    $$ 
    S_\lambda V_{[1,0,\dots,0]}^\GL= \bigoplus_{\mu\in \cP^\O}(\sum_{\eta\in \cP^{(2)}} \overline{c}^\lambda_{\eta\mu} )V_\mu^\O.
    $$
    Here we have denoted by $\cP^\O$ the set of partitions corresponding to weights of $\O(m-4)$, defined by $\mu_1'+\mu_2'\leq m-4$, and by $\cP^{(2)}$ the set of even partitions, i.e., non-increasing sequences satisfying $\eta_u\in 2\ZZ$ for any $u$. As before, $\mu'$ denotes the conjugate partition of $\mu$. From now on, we set $\lambda=[2^{i},1^{j-i}]$, i.e., we focus on the partition in the statement. In this case, $\lambda'=[i,j,0,\dots,0]$. Furthermore, we denote by $LR^\lambda_{\eta\mu}$ the set of Littlewood-Richardson tableaux of shape $\lambda/\eta$ with content $\mu$. Then $\overline{c}^\lambda_{\eta\mu}$ is the cardinality of the subset $\overline{LR}^{\lambda'}_{\eta' \mu'}\subset LR^{\lambda'}_{\eta'\mu'}$ defined in \cite[Section 4.1]{flagged}. 
    Let $ LR^{\lambda'}_{\eta'\mu'}\neq \emptyset$, then $\eta'\subset \lambda'$ implies that $\eta'=[\delta,\delta,0,\dots,0]$ where $0\leq \delta\leq i$, and $\mu'=[j-\delta,i-\delta,0,\dots,0]$, from which we can recover $\eta$ and $\mu$ since the conjugation of Young tableaux is an involution. Recall that we need to impose that $\mu_1'+\mu_2'=j+i-2\delta\leq m-4$, but we will see that this condition is unnecessary. If we denote by $c^{\lambda'}_{\eta'\mu'}$ the cardinality of $LR^{\lambda'}_{\eta'\mu'}$, by the (classical) Littlewood-Richardson rule $c^{\lambda'}_{\eta'\mu'}=1$ if and only if $\eta'=[\delta,\delta,0,\dots,0]$, $0\leq \delta\leq i$ and $\mu'=[j-\delta,i-\delta,0,\dots,0]$, and $c^{\lambda'}_{\eta'\mu'}=0$ otherwise.
    
    The statement of the lemma will then follow if we are able to show that $\overline{LR}^{\lambda'}_{\eta' \mu'}= LR^{\lambda'}_{\eta'\mu'}\neq \emptyset$ if and only if $p-\delta\leq m-4$. Let $\eta^{rev}:=[0,\dots,0,2,\dots,2]$ be the reversed sequence of the sequence $\eta$. Let us suppose that $S\in LR^{\lambda'}_{\eta'\mu'}\neq \emptyset$, then $S$ is a Young tableaux made of two rows of respectively $j-\delta$ blocks and $i-\delta$ blocks. The numbers in the first row will all be equal to one and equal to two in the second row. Thus, always in the notation of \cite{flagged}, $a=j-\delta$, $b=i-\delta$, $(s_1,\dots,s_a)=(1,\dots,1)$ and $(t_1,\dots,t_b)=(2,\dots,2)$. Define 
    $$
    r :=
    \left\{
    \begin{array}{ll}
    m-4-j+\delta & \mbox{if} \:\:\: m-4<2j-2\delta \\
    j-\delta & \mbox{if} \:\:\: m-4\geq 2j-2\delta
    \end{array}
    \right.
    $$
    and
    $$ 
    m_\iota:=\max\{ k\mid \eta_k^{rev}\in X_\iota,\eta_k^{rev}=0  \} \:\:\: \mbox{ for } \:\:\: \iota = 1, \ldots, j-\delta,
    $$
    where $X_\iota$ is defined as:
    $$
    \begin{array}{l} X_\iota = \left\{ \begin{array}{ll}  \{\eta^{rev}_\iota , \dots , \eta^{rev}_{2\iota -1} \} \setminus \{\eta^{rev}_{m_{\iota+1}} , \dots , \eta^{rev}_{m_p}\}  & \textrm{ if } 1\leq \iota\leq r, \\ \{\eta^{rev}_\iota , \dots , \eta^{rev}_{n-p+\iota } \} \setminus \{\eta^{rev}_{m_{\iota+1}} , \dots , \eta^{rev}_{m_p}\}  & \textrm{ if } r< \iota\leq p.  \\ \end{array}\right. \end{array}
    $$
    With this definition, one obtains: 
    $$
    \left\{
    \begin{array}{rcl}
    m_1 & = & m-4-j+1 \\
        & \vdots \\
    m_{j-\delta-1} & = & m-4-\delta-1 \\
    m_{j-\delta} & = & m-4-\delta
    \end{array}
    \right. .
    $$
    Set $f_u$ to be the $u$-th smallest integer in $\{u+1,u+2,\dots,m-4\}\setminus \{m_{u+1},\dots,m_{j-\delta}\}$. Then, by \cite[Theorem 4.10]{flagged}, 
    $$
    S\in \overline{LR}^{\lambda'}_{\eta' \mu'} \iff 2=t_u> \eta^{rev}_{f_u}\mbox{ for }u=1,\dots,i-\delta.
    $$
    Since $\eta^{rev}_k=0$ for $k\leq m-4-\delta$ and $\eta^{rev}_k=2$ for $k\geq m-3-\delta$, and since the sequence $m_1-1,m_2-2,\dots,m_{j-\delta}-j+\delta$ is constant, the above condition is equivalent to 
    \begin{equation*}
        f_{i-\delta}<m-4-\delta+1 \iff 2i-2\delta+j-i<m-4-\delta+1 \iff p-\delta \leq m-4. \qedhere
    \end{equation*}
\end{proof}

\begin{lemma}

Let $i,j$ and $m$ be integers with $0\leq i \leq j$, $i+j \leq 2m-8$ and put $p = i+j$.
Then:
$$ 
S_{2^{i},1^{j-i}}V_{[1,0,\dots,0]}^\GL = \bigoplus_{\max(p-m+4,0) \leq \delta \leq i} R_{\delta}^{i,j}
$$
with
$$
R_{\delta}^{i,j}=
\left\{
\begin{array}{ll}
V_{[2^{i-\delta},1^{j-i},0^{n-j+\delta-2}]}^\SO & \mbox{if }\:\:\: \delta > j-n-h+2\vspace{2mm}\\
V_{[2^{i-\delta},1^{m-4-j-i+2\delta},0^{j-\delta-n+2}]}^\SO & \mbox{if }\:\:\: \delta < j-n-h+2\vspace{2mm}\\
V_{[2^{i-\delta},1,\dots,1,1]}^\SO\oplus V_{[2^{i-\delta},1,\dots,1,-1]}^\SO & \mbox{if }\:\:\: \delta=j-n-h+2 \:\:\: \mbox{ and }\:\:\: i<j\vspace{2mm}\\
V_{[2,\dots,2,2]}^\SO\oplus V_{[2,\dots,2,-2]}^\SO & \mbox{if }\:\:\: \delta=j-n-h+2 \:\:\: \mbox{ and }\:\:\: i=j
\end{array}
\right. .
$$
\end{lemma}

\begin{remark}
The last two cases in the definition of $R_{\delta}^{i,j}$ only appear when $h=0$, i.e., in type $D_n$.
\end{remark}

\begin{proof}
    This is a direct application of the branching rules from $\O(m-4)$ to $\SO(m-4)$ (see \cite{branching}). Indeed, these branching rules imply that, if $\mu=[2^{i-\delta},1^{j-i},0^{m-j+\delta-4}]^\O \in \cP^\O$ with $p-\delta\leq m-4$ then:
    \begin{itemize}
        \item[\_] if $2j-2\delta<m-4$, then $V^\O_\mu\cong V^\SO_\mu$;
        \item[\_] if $2j-2\delta>m-4$, then $V^\O_\mu\cong V^\SO_{\nu}$, where \[\nu=[2^{i-\delta},1^{m-4-p+2\delta},0^{p-i-\delta-n+2}]^\SO;\]
        \item[\_] if $2j-2\delta=m-4$ (so $m$ is even) and $i<j$, then $V^\O_\mu\cong V^\SO_\nu\oplus V^\SO_{\nu'}$, where \[\nu=[2^{i-\delta},1,\dots,1]^\SO, \qquad \nu'=[2^{i-\delta},1,\dots,1,-1]^\SO;\]
        \item[\_] if $2j-2\delta=m-4$ (so $m$ is even) and $i=j$, then $V^\O_\mu\cong V^\SO_\nu\oplus V^\SO_{\nu'}$, where \[\nu=[2,\dots,2]^\SO, \qquad \nu'=[2,\dots,2,-2]^\SO.\] \qedhere
    \end{itemize}
\end{proof}
We are now ready to compute the cohomology of $\wedge^p \cF^\vee$ and $\wedge^p \cF^\vee\otimes \cL$ by applying the Bott-Borel-Weil theorem in type $B_n$. Recall that, in this case, the non-vanishing cohomology of $\Omega^p_X$ for $p\leq m-4$ is given by $H^p(X,\Omega^p_X)\cong \bk^{\lfloor p/2\rfloor +1}$. Moreover, $\fso_{m}=V^{\SO(m)}_{\varpi_2}=[1,1,0,\dots,0]^{\SO(m)}$ and the quasi-minuscule representation is given by the standard representation $V^{\SO(m)}_{\varpi_1}=[1,0,\dots,0]^{\SO(m)}$.

\begin{proposition}
\label{res_jac_bn}
Let $X$ be the adjoint variety of type $B_n$, $m=2n+1$, $h=1/2$ and $e=m-4$. All cohomology groups of $\wedge^p \cF^\vee$ and $\wedge^p \cF^\vee\otimes \cL$ for $0\leq p\leq 2m-8$ vanish except for the following ones:
\begin{itemize}
\item $H^p (X,\wedge^p \cF^\vee)\cong H^p(X, \Omega_X^p) $ for $p\leq e$, where the isomorphism is induced by the surjection $\Omega^p_X\to \wedge^p \cF^\vee $;
\item  $H^{0} (X,\cL)\cong \fso_{m} $ and $H^{n-2} (X,\wedge^{n-1}\cF^\vee\otimes \cL)\cong V_{\varpi_1}^{\SO(m)}$;
\item $ H^{p-2} (X,\wedge^p \cF^\vee\otimes \cL)\cong H^{p-2}(X, \Omega_X^{p-2}) $ for $2\leq p\leq e-1$. Moreover, the terms $ H^{p-2}(X,\Omega_X^{p-2}) $ are the images of the maps induced in cohomology by the embeddings $d\theta\wedge (\bullet):\wedge^{p-2}\cF^\vee \to \wedge^p \cF^\vee\otimes \cL$.
\end{itemize}
\end{proposition}

In the following and the subsequent proof, as already done in the previous sections, we will denote an irreducible homogeneous bundle by its highest weight. 

\begin{proof}
Let us write $\mu_{i,j,\delta}:=[2^{i-\delta},1^{j-i},0^{n-j+\delta-2}]^\SO$ and $\nu_{i,j,\delta}:=[2^{i-\delta},1^{m-4-p+2\delta},0^{p-i-\delta-n+ 1}]^\SO$. Putting together the above lemmas we have, for $0 \leq p \leq 2m-8$:
$$ 
\wedge^p \cF^\vee = \bigoplus_{\substack{\max(p-m+4,0)\leq \delta\leq i\leq j\leq m-4 \\ i+j=p}} W_{i,j,\delta} ,
$$
with 
\[W_{i,j,\delta}:=
\left\{
\begin{array}{ll}
     [-i,-j;\mu_{i,j,\delta}] &  \text{if } \delta > (2j-m+4)/2 \\
     {[-i,-j;\nu_{i,j,\delta}]} & \text{if } \delta < (2j-m+4)/2
\end{array}
\right. 
\]
Also, we get:
\[ \wedge^p \cF^\vee\otimes \cL = \bigoplus_{\substack{\max(p-m+4,0)\leq \delta\leq i\leq j\leq m-4 \\ i+j=p}}W'_{i,j,\delta} \]
with \[
W'_{i,j,\delta}:=
\left\{
\begin{array}{ll}
     [-i+1,-j+1;\mu_{i,j,\delta}] &  \text{if } \delta > (2j-m+4)/2 \\
     {[-i+1,-j+1;\nu_{i,j,\delta}]} & \text{if } \delta < (2j-m+4)/2
\end{array}
\right.\]

\begin{description}
        \item[$\wedge^p \cF^\vee$] Let us begin with the terms of the form $[-i,-j;\mu]$. We have
        \begin{small}
        \begin{align*}
        [-i,-j;\mu]+\varrho&=[n-i-h,n-j-h-1;n-h,\dots\\
    &\ldots,n-i+\delta+h,n-i+\delta-h-1,\dots\\
    &\ldots, n-j+\delta-h,n-j+\delta-2-h,\dots,h].
    \end{align*}
    \end{small}
    In order to have non-vanishing cohomology, either $n-i-h=n-i+\delta-h$ or $n-i-h\leq n-j+\delta-1-h$. In the latter case we get $n-j-h-1\leq -n+i-\delta+h$, which implies $p-\delta\geq m-2$, a contradiction. In the former case we get $\delta=0$ and $n-j-h-1=n-j+\delta-h-1$, which means that we have cohomology isomorphic to $\bk$ at degree $i-\delta+j-\delta=i+j=p$ for $p\leq m-4=e$. A similar computation gives that $[-i,-j;\nu]$ has only cohomology isomorphic to $\bk$ at degree $p$ for $\delta=0$ and $p\leq e$. Putting together both contributions, $\wedge^p \cF^\vee$ has only cohomology in degree $p$. To compute the dimension of this cohomology, notice that we have a one-dimensional contribution for any pair $(i,j)$ such that $i+j=p$, $i\leq j$. A straightforward computation shows that this dimension is thus $\lfloor p/2 \rfloor +1$, as expected.
    \item[$\wedge^p \cF^\vee\otimes \cL$] Let us begin with the terms of the form $[-i+1,-j+1;\mu]$. We have
\begin{small}
    \begin{align*}
        [-i+1,-j+1;\mu]+\varrho=&[n-i+h,n-j-h;n-h,\ldots,\\
        &\dots,n-i+\delta+h,n-i+\delta-h-1,\dots,\\
        &\ldots,n-j+\delta-h,n-j+\delta-2-h,\dots,h].
    \end{align*}
\end{small}    
    In order to have non-vanishing cohomology, either $i=0$ or $n-i+h=n-i+\delta-h$ or $n-i+h\leq n-j+\delta-1-h$. In the last case we get $n-j-h\leq -n+i-\delta+h$, which implies $p-\delta\geq m-3$, a contradiction. In the first case we get $\delta=0$ and there is possibility: $j=0$, $p=0$, the cohomology is in degree $0$ and it is isomorphic to $\fso_m$. In the middle case we get $\delta=1$, $i\geq 1$ and we have cohomology isomorphic to $\bk$ at degree $i-\delta+j-\delta=i+j-2=p-2$ for $p\leq e-1$, or cohomology isomorphic to $V_{\varpi_1}^{\SO(m)}$ when $i=\delta=0$ and $j=n-1=p-1$. A similar computation gives that $[-i+1,-j+1;\nu]$ has only cohomology isomorphic to $\bk$ at degree $p-2$ for $\delta=1$ and $p\leq e-1$. Putting together both contributions, $\wedge^{p} \cF^\vee\otimes \cL$ has only cohomology in degree $0$ (when $p=0$), $n-1$ (when $p=n$) or $p$. Apart from the terms $\fso_m$ and $V_{\varpi_1}^{\SO(m)}$, notice that we have a one-dimensional contribution for any pair $(i,j)$ such that $i+j=p$, $1\leq i\leq j$. A straightforward computation shows that this dimension is thus $\lfloor p/2 \rfloor$, as expected.
\end{description}

The argument about the cohomology groups induced by $\Omega_X^i\to \wedge^i \cF^\vee \to 0$ and $d\theta\wedge(\bullet)$ are the same as in the proof of Proposition \ref{lem_coh_An}.
\end{proof}

Let us now compute the cohomology of $\wedge^p \cF^\vee$ and $\wedge^p \cF^\vee\otimes \cL$ by applying the Bott-Borel-Weil theorem in type $D_n$. Recall that in type $D_n$ the non-vanishing cohomology of $\Omega^p_X$ for $p\leq e=m-4$ is given by $H^p(X, \Omega^p_X)\cong \bk^{\lfloor p/2\rfloor +1+ \delta_{p\geq n-2} }$. Moreover $\fso_{m}=V^{\SO(m)}_{\varpi_2}=V_{[1,1,0,\dots,0]}^{\SO(m)}$.

\begin{proposition}
\label{res_jac_dn}
Let $X$ be the adjoint variety of type $D_n$, $m=2n$, $h=0$, $e=m-4$. All cohomology groups of $\wedge^p \cF^\vee$ and $\wedge^p \cF^\vee\otimes \cL$ for $0\leq p\leq 2m-8$ vanish except for the following ones:
\begin{itemize}
\item $H^p (X,\wedge^p \cF^\vee)\cong H^p(X, \Omega_X^p) $ for $p\leq e$, where the isomorphism is induced by $\Omega^p_X\twoheadrightarrow \wedge^p \cF^\vee $;
\item $H^{0} (X,\cL)\cong \fso_{m} $;
\item $ H^{p-2} (X,\wedge^p \cF^\vee\otimes \cL)\cong H^{p-2}(X, \Omega_X^{p-2}) $ for $2\leq i\leq e-1$. Moreover, the terms $ H^{p-2}(\Omega_X^{p-2}) $ are the images of the maps induced in cohomology by the embeddings $d\theta\wedge (\bullet):\wedge^{p-2}\cF^\vee \hookrightarrow \wedge^p \cF^\vee\otimes \cL$.
\end{itemize}
\end{proposition}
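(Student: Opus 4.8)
The plan is to mirror the proof of Proposition \ref{res_jac_bn}, specialising to $h=0$ and carefully incorporating the two extra families of summands $T$ and $U$ which, by the Remark following the last lemma, occur only in type $D_n$. First I would assemble, from the three preceding lemmas, the explicit decompositions
\[
\wedge^p \cF^\vee = \bigoplus [-i,-j;\cdot], \qquad \wedge^p \cF^\vee\otimes \cL = \bigoplus [-i+1,-j+1;\cdot],
\]
where the $\SO(m-4)$-part $\cdot$ runs over the weights occurring in $R\oplus S\oplus T\oplus U$, with indices constrained by $0\le \delta\le i\le j$, $i+j=p$ and $p-\delta\le m-4$. Here tensoring by $\cL=[1,1;0,\dots,0]$ simply shifts the first two entries by $+1$.

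For the families $R$ and $S$ (the weights $\mu=[2^{i-\delta},1^{j-i},0^{\ldots}]$ and $\nu=[2^{i-\delta},1^{m-4-p+2\delta},0^{\ldots}]$) the computation is essentially verbatim the one in type $B_n$, now carried out with $\rho=[n-1,n-2;n-3,\dots,1,0]$. Adding $\rho$ and imposing that no nonzero integer be repeated should force $\delta=0$ for $\wedge^p\cF^\vee$, giving one copy of $\CC$ in cohomological degree $p$ for each pair $(i,j)$ with $i\le j$, $i+j=p$, hence the base count $\lfloor p/2\rfloor+1$; after tensoring by $\cL$ the same analysis forces $\delta=1$ (apart from the $p=0$ term $\fso_m$) and yields $\CC$ in degree $p-2$, for a base count $\lfloor p/2\rfloor$. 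I would also verify, tracking the $h=0$ arithmetic in the boundary case $j=p=n$, that no quasi-minuscule contribution analogous to the $V_{\varpi_1}^{\SO(m)}$ term of Proposition \ref{res_jac_bn} survives the no-repeat test, consistently with $D_n$ being simply laced.

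The main obstacle, and the genuinely new point in type $D_n$, is the analysis of $T$ and $U$. These arise precisely when $j-\delta=n-2$ and carry a sign $\pm 1$ (respectively $\pm 2$) in the last slot, reflecting the splitting of an $\O(m-4)$-representation into two $\SO(m-4)$ half-spin pieces. I would add $\rho$ to each of $[-i,-j;2^{i-\delta},1,\dots,1,\pm 1]$ and $[-i,-j;2,\dots,2,\pm 2]$ and run the repetition test; the expectation is that exactly one of the two sign choices survives, and only when $p\ge n-2$, contributing a single extra copy of $\CC$ in degree $p$. This is exactly what is needed to upgrade the count to $H^p(\Omega^p_X)\cong\CC^{\lfloor p/2\rfloor+1+\delta_{p\ge n-2}}$, and the shifted computation for $\wedge^p\cF^\vee\otimes\cL$ should supply the matching extra term in degree $p-2$. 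Keeping track of the sign conventions and of the boundary case $i=j$ (where $U$ rather than $T$ intervenes) is where the bookkeeping will be most delicate.

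Finally, the identification of these cohomology groups with $H^p(\Omega^p_X)$ and $H^{p-2}(\Omega^{p-2}_X)$ is concluded exactly as in Propositions \ref{lem_coh_An} and \ref{res_jac_bn}. Using $\rs(\Omega^p_X)=\wedge^{p-1}\cF^\vee\otimes\cL^\vee\oplus\wedge^p\cF^\vee$ together with the acyclicity of $\wedge^{p-1}\cF^\vee\otimes\cL^\vee$ for $p\le e$ (via Serre duality), the surjection $\Omega^p_X\twoheadrightarrow\wedge^p\cF^\vee$ induces the stated isomorphism in the first bullet; and since each irreducible bundle of $\rs(\wedge^{p-2}\cF^\vee)$ appears once in $\rs(\wedge^p\cF^\vee\otimes\cL)$ with matching cohomology, the embeddings $d\theta\wedge(\bullet)$ identify the summands $H^{p-2}(\Omega^{p-2}_X)$ inside $H^{p-2}(\wedge^p\cF^\vee\otimes\cL)$, giving the second bullet.
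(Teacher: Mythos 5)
Your overall architecture matches the paper's proof: the terms $[-i,-j;\mu]$ and $[-i,-j;\nu]$ are indeed handled verbatim as in type $B_n$ with $h=0$ (and the quasi-minuscule term $V_{\varpi_1}^{\SO(m)}$ indeed drops out), and the final identifications via $\Omega^p_X\twoheadrightarrow \wedge^p\cF^\vee$ and $d\theta\wedge(\bullet)$ are concluded exactly as in Proposition \ref{lem_coh_An}. However, your treatment of the families $T$ and $U$ contains a genuine error: you expect that ``exactly one of the two sign choices survives'', whereas in fact \emph{both} survive. For $\alpha^\pm=[2^{i},1,\dots,1,\pm 1]$ with $\delta=0$, $j=n-2$, $i<j$, one computes $\lambda+\rho=[\,n-1-i,\,0;\,n-1,\dots,n-i,\,n-2-i,\dots,2,\,\pm 1\,]$: the absolute values are pairwise distinct with a single zero entry (allowed in type $D$), for \emph{either} sign, so Bott--Borel--Weil gives a copy of $\CC$ in degree $p$ for each of $\alpha^+$ and $\alpha^-$; the same happens for $\beta^\pm$ when $i=j=n-2$. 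This is what the paper's proof asserts and uses.

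The reason your count nevertheless appeared to come out right is a compensating bookkeeping slip on the $R\oplus S$ side: the inequalities defining $R$ and $S$ are strict ($j-\delta<n-2$, resp.\ $j-\delta>n-2$), so for $p\ge n-2$ the pair $(i,j)=(p-n+2,\,n-2)$ with $\delta=0$ is \emph{absent} from $R\oplus S$, and the base count in degree $p$ is $\lfloor p/2\rfloor$, not $\lfloor p/2\rfloor+1$. The correct mechanism is that this missing $\mu/\nu$ contribution is replaced by the \emph{two} surviving summands $\alpha^\pm$ (or $\beta^\pm$), restoring the total to $\lfloor p/2\rfloor+2$ for $p\ge n-2$, in agreement with $h^{p,p}$ of $OG(2,2n)$. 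Under your hypothesis (one sign surviving) the corrected total would be $\lfloor p/2\rfloor+1$, contradicting the known Betti numbers of the $D_n$ adjoint variety; so the sign analysis must be redone as above, and the analogous two-fold contribution must also be tracked in the shifted computation for $\wedge^p\cF^\vee\otimes\cL$.
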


\begin{proof}
Define the following
\begin{align*}
    \mu_{i,j,\delta}&:=[2^{i-\delta},1^{j-i},0^{n-j+\delta-2}]^\SO, \\ \nu_{i,j,\delta}&:=[2^{i-\delta},1^{m-4-p+2\delta},0^{p-i-\delta-n+2}]^\SO,\\ \alpha^\pm_{i,\delta}&:=[2^{i-\delta},1,\dots,1,\pm 1]^\SO,\\ \beta^\pm&:=[2,\dots,2,\pm 2]^\SO.
\end{align*} 
Putting together the above lemmas we have, for $0\leq p\leq 2m-8$:
$$
\wedge^p \cF^\vee = \bigoplus_{\substack{\max(p-m+4,0)\leq \delta\leq i\leq j\leq m-4 \\ i+j=p}} W''_{i,j,\delta} 
$$
with 
$$
W''_{i,j,\delta}:=
\left\{
\begin{array}{ll}
     [-i,-j;\mu_{i,j,\delta}] &  \text{if } \delta > (2j-m+4)/2 \\
     {[-i,-j;\nu_{i,j,\delta}]} & \text{if } \delta < (2j-m+4)/2 \\
     {[-i,-j;\alpha^+_{i,\delta}]\oplus [-i,-j;\alpha^-_{i,\delta}]} & \text{if } \delta = (2j-m+4)/2 \text{ and } i<j\\
     {[-i,-j;\beta^+]\oplus [-i,-j;\beta^-]} & \text{if } \delta = (2j-m+4)/2 \text{ and } i=j\\
\end{array}
\right. ,
$$
and $\wedge^p \cF^\vee\otimes \cL$ has a similar expression. The cohomology of the terms $[-i,-j;\mu_{i,j,\delta}]$ and $[-i,-j;\nu_{i,j,\delta}]$ for $\wedge^p \cF^\vee$ and $[-i+1,-j+1;\mu]$ and $[-i+1,-j+1;\nu]$ for $\wedge^p \cF^\vee\otimes \cL$ are computed exactly in the same way as in the proof of Proposition \ref{res_jac_bn} by choosing $h=0$. The only difference will be that the term $V_{\varpi_1}^{\SO(m)}$ does not appear in this case. Thus, we deal only with the cohomology of the remaining terms. 
\begin{description}
    \item[$\wedge^p \cF^\vee$] Proceeding as in the proof of Proposition \ref{res_jac_bn}, one shows that $[-i,-j;\alpha^+_{i,\delta}]$ and $[-i,-j;\alpha^-_{i,\delta}]$ have cohomology isomorphic to $\bk$ only for $\delta=0$ in degree $p$. Similarly, $[-i,-j;\beta^+]$ and $[-i,-j;\beta^-]$ have cohomology isomorphic to $\bk$ only for $\delta=0$ in degree $p$. Since $\delta=0$, these contributions only appear for $j=n-2$ and $p\geq n-2$. Combining them with the contributions coming from the terms $[-i,-j;\mu_{i,j,\delta}]$ and $[-i,-j;\nu_{i,j,\delta}]$, one obtains that the dimension of the cohomology of $\wedge^p \cF^\vee$ in degree $p$ is equal to: $\lfloor p/2\rfloor +1$ if $p<n-2$ and $\lfloor p/2\rfloor +2$ if $p\geq n-2$, as expected. 
    \item[$\wedge^p \cF^\vee\otimes \cL$] The determination of the cohomology of this bundle does not present any novelty with respect to the above computations and the ones done in Proposition \ref{res_jac_bn}, so we omit it.
\end{description}

The argument about the cohomology groups induced by $\Omega_X^i\to \wedge^i \cF^\vee \to 0$ and $d\theta\wedge(\bullet)$ are the same as in the proof of Proposition \ref{lem_coh_An}.
\end{proof}
    
\subsection{Exceptional types}
 \label{exc_types}   
  In this section, we treat the simple exceptional linear algebraic groups $G_2$, $F_4$, $E_6$, $E_7$, $E_8$.
 
\subsubsection{Type $G_2$.} In this case $X\subset \PP(\fg_2)\cong\PP(V_{\varpi_1})$ is a Fano fivefold of index $3$. It is isomorphic to the quotient $G_2/P_1$. We have $\cL=\cE_{\varpi_1}$, $\cF=\cE_{-\varpi_1+3\varpi_2}$ and $\cF^\vee=\cE_{\alpha_1}=\cE_{-2\varpi_1+3\varpi_2}$. The quasi-minuscule representation is $\hat{\fg}_2=V_{\varpi_2}$.

\subsubsection{Type $F_4$.} In this case $X\subset \PP(\ff_4)\cong\PP(V_{\varpi_1})$ is a Fano $15$-fold of index $8$. It is isomorphic to the quotient $F_4/P_1$. We have $\cL=\cE_{\varpi_1}$, $\cF=\cE_{-\varpi_1+\varpi_2}$ and $\cF^\vee=\cE_{\alpha_1}=\cE_{-2\varpi_1+\varpi_2}$. The quasi-minuscule representation in this case is $\hat{\ff}_4=V_{\varpi_4}$. 

\begin{proposition}
\label{res_jac_g2}
Let $X$ be the adjoint variety of type $G_2$ or $F_4$. All cohomology groups of $\wedge^i \cF^\vee$ and $\wedge^i \cF^\vee\otimes \cL$ vanish except for the following ones:
\begin{itemize}
\item $H^i (X,\wedge^i \cF^\vee)\cong H^i(X, \Omega_X^i) $ for $i\leq e$, where the isomorphism is induced by the surjection $\Omega^i_X\to \wedge^i \cF^\vee $;
\item $H^{0} (X, \cL)\cong \fg $, and: in type $G_2$ $H^{1} (X,\wedge^2\cF^\vee\otimes \cL)\cong  \hat{\fg}$, while in type $F_4$ $H^{2} (X,\wedge^3\cF^\vee\otimes \cL)\cong  \hat{\fg}$;
\item $ H^{i-2} (X,\wedge^i \cF^\vee\otimes \cL)\cong H^{i-2}(X, \Omega_X^{i-2}) $ for $2\leq i\leq e-1$. Moreover, the terms $ H^{i-2}(X, \Omega_X^{i-2}) $ are the images of the maps induced in cohomology by the embeddings $d\theta\wedge (\bullet):\wedge^{i-2}\cF^\vee \to \wedge^i \cF^\vee\otimes \cL$.
\end{itemize}
\end{proposition}

\subsubsection{Type $E_6$.} In this case $X\subset \PP(\fe_6)\cong\PP(V_{\varpi_2})$ is a Fano 21-fold of index $11$. It is isomorphic to the quotient $E_6/P_2$. We have $\cL=\cE_{\varpi_2}$, $\cF=\cE_{-\varpi_2+\varpi_4}$ and $\cF^\vee=\cE_{\alpha_2}=\cE_{-2\varpi_2+\varpi_4}$.
    
\subsubsection{Type $E_7$.} In this case $X\subset \PP(\fe_7)\cong\PP(V_{\varpi_1})$ is a Fano 33-fold of index $17$. It is isomorphic to the quotient $E_7/P_1$. We have $\cL=\cE_{\varpi_1}$, $\cF=\cE_{-\varpi_1+\varpi_3}$ and $\cF^\vee=\cE_{\alpha_1}=\cE_{-2\varpi_1+ \varpi_3}$.
    
\subsubsection{Type $E_8$.} The $E_8$-adjoint variety $X\subset \PP(\fe_8)\cong\PP(V_{\varpi_8})$ is a Fano 57-fold of index $29$. It is isomorphic to the quotient $E_8/P_8$. We have $\cL=\cE_{\varpi_8}$, $\cF=\cE_{-\varpi_8+\varpi_7}$ and $\cF^\vee=\cE_{\alpha_8}=\cE_{-2\varpi_8+ \varpi_7}$.

\begin{proposition}
    \label{res_jac_e6}
    Let $X$ be the adjoint variety of type $E_6$, $E_7$, $E_8$. All cohomology groups of $\wedge^i \cF^\vee$ and $\wedge^i \cF^\vee\otimes \cL$ vanish except for the following ones:
\begin{itemize}
    \item $H^i (X,\wedge^i \cF^\vee)\cong H^i(X,\Omega_X^i) $ for $i\leq e$, where the isomorphism is induced by the surjection $\Omega^i_X\to \wedge^i \cF^\vee $;
    \item $H^{0} (X,\wedge^0\cF^\vee\otimes \cL)\cong \fg $;
    \item $ H^{i-2} (X,\wedge^i \cF^\vee\otimes \cL)\cong H^{i-2}(X,\Omega_X^{i-2})  $ for $2\leq i\leq e-1$. Moreover, the terms $ H^{i-2}(X, \Omega_X^{i-2}) $ are the images of the maps induced in cohomology by the embeddings $d\theta\wedge (\bullet):\wedge^{i-2}\cF^\vee \to \wedge^i \cF^\vee\otimes \cL$.
    \end{itemize}
    \end{proposition}

  \begin{proof}[Proof of Propositions \ref{res_jac_g2},
  \ref{res_jac_e6}] 
  The group $G_2$ case can be dealt with by hand. For the groups $F_4$, $E_6$, $E_7$ and $E_8$ we used a Python script using \cite{Lie} in order to compute the cohomology groups of the relevant homogeneous bundles through the Bott-Borel-Weil theorem. The argument about the cohomology groups induced by $\Omega_X^i\to \wedge^i \cF^\vee \to 0$ and $d\theta\wedge(\bullet)$ are the same as in the proof of Proposition \ref{lem_coh_An}.
  \end{proof}

\subsection{Free resolution of the Jacobian ideal of adjoint discriminant}
    
Now we are ready to state and prove the main results of this section.
Namely, we provide a uniform expression for a locally free resolution of the restricted Jacobian ideal of discriminants of adjoint varieties. 

\subsubsection{The cup product with the hyperplane class}

From the exact sequence \eqref{eq_omegap_F}, for $i=1,\ldots,N$, we can compute the cohomology of $\Omega^i_X\otimes \cL$ from the cohomology of $\wedge^{i-1}\cF^\vee$ and $\wedge^i \cF^\vee\otimes \cL$. Recall that $K^{i-1}$ and $C^i$ are, respectively, the kernel and cokernel of the multiplication by the hyperplane class $H^{i-1}(X, \Omega^{i-1}_X)\to H^i(X, \Omega^i_X)$. Recall moreover that $H^{i-1}(X,\wedge^{i-1}\cF^\vee)\cong H^{i-1}(\Omega^{i-1}_X)$ and that the factor $H^{i-2}(\Omega^{i-2}_X)$ inside $H^{i-2}(X,\wedge^i \cF^\vee\otimes \cL)$ is induced by the contact form $d\theta$ (Propositions \ref{lem_coh_An}, \ref{res_jac_bn}, \ref{res_jac_dn}, \ref{res_jac_g2}, \ref{res_jac_e6}). We can restrict the connecting homomorphism $H^{i-2}(X,\wedge^i \cF^\vee\otimes \cL) \to H^{i-1}(X,\wedge^{i-1}\cF^\vee)$ induced by the exact sequence \eqref{eq_omegap_F} to obtain a morphism 
$$ \eta_i: H^{i-2}(X, \Omega^{i-2}_X)\to H^{i-1}(X, \Omega^{i-1}_X).$$

\begin{lemma}
\label{lem_max_rank}
The morphism $\eta_i$ is, modulo non-zero scalar, the multiplication by the hyperplane class $[c_1(\cL)]$. In particular, it has maximal rank. 
\end{lemma}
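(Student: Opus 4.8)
The plan is to recognise $\eta_i$ as a connecting homomorphism and to compute it through the extension class of the sequence it comes from, comparing that class with the hyperplane class. Recall that for a short exact sequence $0\to A\to B\to C\to 0$ of locally free sheaves the connecting map $H^{k}(C)\to H^{k+1}(A)$ is cup product with the extension class in $\Ext^1(C,A)=H^1(X,C^\vee\otimes A)$. Applying this to \eqref{eq_omegap_F} with $p=i$, the boundary map $\partial\colon H^{i-2}(\wedge^i\cF^\vee\otimes\cL)\to H^{i-1}(\wedge^{i-1}\cF^\vee)$ is cup product with the extension class $\xi$ of \eqref{eq_omegap_F}, and by construction $\eta_i$ is the restriction of $\partial$ to the subfactor $H^{i-2}(\Omega^{i-2}_X)\cong H^{i-2}(\wedge^{i-2}\cF^\vee)$, followed by the identification $H^{i-1}(\wedge^{i-1}\cF^\vee)\cong H^{i-1}(\Omega^{i-1}_X)$ of Propositions \ref{lem_coh_An} and \ref{res_jac_e6}.

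First I would identify $\xi$. Since \eqref{eq_omegap_F} is obtained by taking the $i$-th exterior power of the dual contact sequence $0\to\cL^\vee\xrightarrow{\theta}\Omega_X\to\cF^\vee\to 0$ and twisting by $\cL$, its extension class is the image of the extension class of the contact sequence under the natural interior-product (contraction) map on $\Ext^1$. By Lemma \ref{lem_iso_xi} the contact sequence is induced by the hyperplane class, i.e. its class is a nonzero multiple of $c_1(\cL)\in H^1(\Omega^1_X)=H^1(\cF^\vee)$. Hence $\partial$ is, in this description, contraction against $c_1(\cL)$.

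The heart of the argument is then to see what happens on the subfactor. The inclusion $H^{i-2}(\wedge^{i-2}\cF^\vee)\hookrightarrow H^{i-2}(\wedge^i\cF^\vee\otimes\cL)$ is induced by $d\theta\wedge(\bullet)$, and the non-degeneracy of the fibrewise symplectic form $d\theta$ gives, in the range $i\le e+1$ where this map is an embedding, the usual $\mathfrak{sl}_2$-type relation between wedging and contracting with $d\theta$: their composite is a nonzero scalar on $\wedge^{i-2}\cF^\vee$. Unwinding the identifications of Lemma \ref{lem_iso_xi}, this shows that $\eta_i$ equals cup product with $c_1(\cL)$ on $H^{i-2}(\Omega^{i-2}_X)$ up to a nonzero scalar. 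Equivalently, one can package the same computation as a morphism of short exact sequences relating \eqref{wedge power omega} at level $i-1$ (whose connecting map is, by the proof of Theorem \ref{thm_res_structure}, exactly multiplication by the hyperplane class) and \eqref{eq_omegap_F}, with vertical maps the contact quotient $\Omega^{i-1}_X\twoheadrightarrow\wedge^{i-1}\cF^\vee$ and $d\theta\wedge(\bullet)\circ(\Omega^{i-2}_X\twoheadrightarrow\wedge^{i-2}\cF^\vee)$, and then invoke naturality of the boundary map.

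I expect the main obstacle to be precisely this matching on the subfactors: constructing the compatible middle map of the comparison diagram (or, equivalently, checking that the interior-product description of $\xi$ restricts correctly to the $d\theta$-image), while keeping track of the $\cL$-twists so that the comparison lands on the groups $H^{\bullet}(\Omega^\bullet_X)$ singled out in the Propositions and not on a spurious summand of $\rs(\wedge^i\cF^\vee\otimes\cL)$. The non-vanishing of the scalar is not a separate difficulty, as it follows from the injectivity of $d\theta\wedge(\bullet)$ for $i\le e+1$ together with the non-degeneracy of $d\theta$. Finally, once $\eta_i$ is identified with a nonzero multiple of multiplication by the hyperplane class, its maximal rank is immediate, being exactly the maximality statement for the maps of $\bF^{\cO_X}_\bullet$ already established in the proof of Theorem \ref{thm_res_structure} via the Lefschetz theorem.
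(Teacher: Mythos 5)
Your proposal is correct in substance and is best read as a rigorous expansion of the paper's much terser argument. The paper's proof has three ingredients: maximal rank is the Lefschetz statement, exactly as you say; the dual contact sequence $0\to\cL^\vee\to\Omega_X\to\cF^\vee\to 0$ has extension class the hyperplane class (the paper cites \cite{bucmor}, you recover it from the proof of Lemma \ref{lem_iso_xi} -- same content); and then a one-line reduction, ``since the maps $\eta_i$ are induced by $\eta_2$ by definition of $d\theta\wedge(\bullet)$, it suffices to treat $\eta_2$,'' for which \eqref{eq_omegap_F} is just a twist of the dual contact sequence, so the image of $\eta_2$ is $c_1(\cL)$. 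What you call the ``alternative packaging'' -- mapping \eqref{wedge power omega} at level $i-1$ to \eqref{eq_omegap_F} at level $i$ and invoking naturality of the connecting homomorphism -- is precisely a proof of the compatibility the paper asserts without argument, and the middle map you worry about does exist: take wedge product with $d\theta'\in H^0(\wedge^2\hat{\Omega}_X\otimes\cL)$ (the twisted two-form used in the proof of Proposition \ref{prop-LieB}) followed by the projection $\hat{\Omega}^{i+1}_X\otimes\cL\to\Omega^i_X\otimes\cL$ of \eqref{wedge power omega}; on the subsheaf $\Omega^{i-1}_X$ this composite is $\theta\wedge(\bullet)$, because contracting $d\theta'$ with the Euler section returns the contact form, and on the quotient it induces $d\theta\wedge(\bullet)$, as required. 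So that route closes, and it buys an assertion-free version of the paper's reduction to $\eta_2$.

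One caveat on your first mechanism: the claim that contraction-after-wedging with $d\theta$ ``is a nonzero scalar on $\wedge^{i-2}\cF^\vee$'' is not literally true. The $\mathfrak{sl}_2$ commutation relation gives $\Lambda L = L\Lambda + (e-i+2)\,\mathrm{id}$ on $\wedge^{i-2}\cF^\vee$, which is scalar only on the primitive part; correspondingly, composing $d\theta\wedge(\bullet)$ with cup product against the extension class of \eqref{eq_omegap_F} produces, besides the desired multiple of $c_1(\cL)\cup(\bullet)$, a correction term of the shape $d\theta\wedge\iota_{\xi}(\bullet)$ whose vanishing on the cohomology groups singled out in Propositions \ref{lem_coh_An}--\ref{res_jac_e6} is not automatic. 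If you pursue that computation you must either restrict to primitive classes or explicitly dispose of the correction term; the comparison-diagram version avoids the issue entirely and is the one to keep.
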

    
\begin{proof}
    The statement about the rank is a consequence of the Lefschetz hyperplane theorem. Moreover, since the maps $\eta_i$ are induced for each $i$ by the map $\eta_2$ by definition of $d\theta\wedge (\bullet)$, it is sufficient to prove the statement for $\eta_2$. In this case the exact sequence \eqref{eq_omegap_F} is just a twist of
    $$ 0\to \cL^\vee \to \Omega_X \to \cF^\vee \to 0 .$$
    The above sequence describes a non-trivial extension, induced by the element in $H^1(X,\cF\otimes \cL^\vee)\cong H^{1}(X,\cF^\vee)\cong H^1(\Omega_X)$ corresponding to $\cL$ (see for instance \cite{bucmor}). Thus the image of $\eta_2$ is $c_1[\cL]$ and as a consequence $\eta_i$ is the multiplication by the hyperplane class.
\end{proof}

\subsubsection{The resolution of the Jacobian ideal and Theorem \ref{main not simply laced}}

Let $G$ be a simple linear algebraic group of rank $n$ and let $\epsilon=0$ (respectively $\epsilon =1$) if $G$ is not of simply laced type (respectively $G$ is of simply laced type). Let $\fg$ be the Lie algebra of $G$ and let $\hat{\fg}$ be the quasi-minuscule representation. Recall that, in the simply laced case, $\hat{\fg}\cong\fg$. Let $j=0$ if $G$ is of simply-laced type. If $G$ is not of simply-laced type, we let $j$ be the smallest short exponent of $G$, see \cite{kostant:lie-group}. In other words, $j=1$ if $G=C_n$, $j=2$ if $G=G_2$, $j=3$ if $G=F_4$ and $j=n-1$ if $G=B_n$. Let $X$ be the adjoint variety of $G$. 
Recall that $(e_1\leq \dots \leq e_n)$ are the exponents of $\fg$ and $s$ is the number of long simple roots of $\fg$.

\begin{theorem} 
\label{thm_res}
    There exists a $G$-equivariant free resolution of the form:
\begin{small}
\[0 \to \begin{array}{c} \bigoplus_{i=1}^s \bU(-e_i-1) \\
\oplus \\
\delta_{0,\epsilon}\hat{\fg}\otimes \bU(-j-1)
\end{array}
    \to
    \begin{array}{c}
         \bigoplus_{i=1}^s \bU(-e_i)  \\
         \oplus \\
         \delta_{0,\epsilon}\hat{\fg}\otimes \bU(-j) \oplus \fg \otimes \bU(-1) 
    \end{array} 
    \to \fg\otimes \bU \to J_D(d-1)\to 0.\]
\end{small}
\end{theorem}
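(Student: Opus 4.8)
The plan is to extract the resolution directly from Weyman's complex $\bF^\cL_\bullet$. By the proposition opening this subsection, which guarantees via Brion's vanishing that $\bF^\cL_u=0$ for $u>0$, together with Theorem \ref{thm_resolution_pushforward} (whose hypothesis $H^0(X,\cO_X(1))=\Vv$ holds because $X\subset\PP(\fg)$ is non-degenerate and linearly normal), we already know that $\bF^\cL_\bullet$ is a locally free resolution of $\rhop_*(\pip^*\cL)\cong\cJ_D(d-1)$. Thus everything reduces to computing the terms
\[
\bF^\cL_u=\bigoplus_{l-p=u}H^l\bigl(X,\wedge^p\hat{\Omega}_X\otimes\cL\bigr)\otimes\cO(-p),
\]
where Lemma \ref{lem_iso_xi} was used to replace $\hat{\cT}_X(-1)$ by $\hat{\Omega}_X$, and to check that $\bF^\cL_u=0$ unless $u\in\{0,-1,-2\}$.

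First I would compute the cohomology of $\wedge^p\hat{\Omega}_X\otimes\cL$ by filtering it. Tensoring \eqref{wedge power omega} by $\cL$ and then applying the contact sequence \eqref{eq_omegap_F} to each factor exhibits $\wedge^p\hat{\Omega}_X\otimes\cL$ as an iterated extension whose graded pieces are $\wedge^{p-1}\cF^\vee$, $\wedge^p\cF^\vee\otimes\cL$, $\wedge^{p-2}\cF^\vee$ and $\wedge^{p-1}\cF^\vee\otimes\cL$. The cohomology of these is exactly what Propositions \ref{lem_coh_An}, \ref{res_jac_bn}, \ref{res_jac_dn}, \ref{res_jac_g2} and \ref{res_jac_e6} supply case by case: $H^\bullet(\wedge^a\cF^\vee)$ sits in degree $a$ and equals $H^a(\Omega^a_X)$, whereas $H^\bullet(\wedge^a\cF^\vee\otimes\cL)$ sits in degree $a-2$ and equals $H^{a-2}(\Omega^{a-2}_X)$, with the two exceptions of the adjoint summand $\fg=H^0(\cL)$ at $(a,l)=(0,0)$ and, in the non-simply laced types, the quasi-minuscule summand $\hat{\fg}$ at $(a,l)=(j,j-1)$.

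The assembly then hinges on Lemma \ref{lem_max_rank}: the connecting maps in the long exact sequences of both filtering sequences are, up to a nonzero scalar, multiplication by the hyperplane class, hence of maximal rank. Concretely, inside $\Omega^p_X\otimes\cL$ the connecting map $\eta_p$ of \eqref{eq_omegap_F} is injective below the middle dimension, so it annihilates the copy of $H^{p-2}(\Omega^{p-2}_X)$ carried by $\wedge^p\cF^\vee\otimes\cL$ against part of $H^{p-1}(\Omega^{p-1}_X)$, leaving only the primitive cohomology --- exactly the cancellation already used for $\cO_{D_n}$ in Theorem \ref{thm_res_structure}. Passing from $\Omega^p_X\otimes\cL$ to $\wedge^p\hat{\Omega}_X\otimes\cL$ through \eqref{wedge power omega}, the further connecting maps vanish on these primitive classes, so no additional cancellation occurs; instead the affine extension produces the adjoint representation twice, as $\bF_0=\fg\otimes\cO$ (from $p=0$) and as $\fg\otimes\cO(-1)$ in $\bF_{-1}$ (from $H^0(\cL)$ lifted at $p=1$). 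The surviving primitive classes then give $\bigoplus_i\cO(-e_i)$ in $\bF_{-1}$ and $\bigoplus_i\cO(-e_i-1)$ in $\bF_{-2}$, while the quasi-minuscule class at $(a,l)=(j,j-1)$ propagates through \eqref{wedge power omega} to yield $\hat{\fg}\otimes\cO(-j)$ in $\bF_{-1}$ and $\hat{\fg}\otimes\cO(-j-1)$ in $\bF_{-2}$. Matching degrees and multiplicities of the surviving trivial summands against the Hodge numbers $h^{a,a}(X)$, by the same bookkeeping used for $\cO_{D_n}$, identifies them with the $s$ relevant exponents.

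The main obstacle is that this whole computation is irreducibly type-dependent; the uniform shape of the statement only emerges a posteriori. The two delicate points are, first, verifying that the maximal-rank cancellation really leaves nothing beyond the exponent-indexed line bundles --- this needs the precise Hodge numbers of each adjoint variety and is where Lemma \ref{lem_max_rank} is indispensable --- and, second, tracking in each of $B_n$, $C_n$, $F_4$ and $G_2$ the single quasi-minuscule class $\hat{\fg}\subset H^{j-1}(X,\wedge^j\cF^\vee\otimes\cL)$ and checking that it lands at precisely the twists $-j$ and $-j-1$ for the correct type-specific value of $j$. Both facts are exactly the output of the Bott--Borel--Weil and branching-rule computations in Propositions \ref{res_jac_bn}--\ref{res_jac_e6}, so that once they are fed into the cancellation mechanism above the resolution follows.
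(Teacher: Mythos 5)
Your proposal is correct and takes essentially the same route as the paper's proof: both rest on Theorem \ref{thm_resolution_pushforward} together with the Brion-vanishing proposition to know that $\bF^{\cL}_\bullet$ resolves $\rhop_*(\pip^*\cL)\cong\cJ_D(d-1)$, then feed in the case-by-case cohomology computations of Propositions \ref{lem_coh_An}, \ref{res_jac_bn}, \ref{res_jac_dn}, \ref{res_jac_g2}, \ref{res_jac_e6}, cancel via the maximal-rank statement of Lemma \ref{lem_max_rank}, and identify the surviving trivial summands with the exponents by the bookkeeping of Theorem \ref{thm_res_structure}. Your write-up merely makes explicit the cancellation mechanism (Lefschetz injectivity below the middle dimension, vanishing of the induced maps on the primitive quotients) that the paper compresses into a single sentence.
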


Let us note that Theorem \ref{main not simply laced} follows from the previous result, coupled with \eqref{eq_affine_tangent_jac_modules}.
In terms of sheaves, using Theorem \ref{main not simply laced} and \eqref{relation-T-Der}, we get the following result.
\begin{corollary} Let $G$ be a simple linear algebraic group, not of simply laced type. Let $\Delta$ be the adjoint discriminant of $G$ and set $D=\VV(\Delta)$. Then we have a
$G$-equivariant locally free resolution of the form:
\[
\begin{small}
0 \to \begin{array}{c}
\bigoplus_{i=1}^s \cO_{\PP(\fg)}(-e_i) \\ \oplus \\ \hat \fg \otimes \cO_{\PP(\fg)}(-j)
\end{array}\to
\begin{array}{c}
 \bigoplus_{i=1}^s \cO_{\PP(\fg)}(1-e_i) \oplus  \fg \otimes \cO_{\PP(\fg)}\\
 \oplus \\
 \hat \fg \otimes \cO_{\PP(\fg)}(1-j)
 \end{array}
 \to \cT_\PP\langle D \rangle \to 0.
\end{small}
\]
\end{corollary}

\begin{proof}[Proof of Theorem \ref{thm_res}]
The result is a consequence of Theorem \ref{thm_resolution_pushforward}. In order to apply it, we use the cohomology computations given in Propositions \ref{lem_coh_An}, \ref{res_jac_bn}, \ref{res_jac_dn}, \ref{res_jac_g2}, \ref{res_jac_e6}.
Then, by Lemma \ref{lem_max_rank}, one obtains
\begin{small}
\begin{equation}\label{diag-Ginvresolution}
0 \to \begin{array}{c}
     \oplus_{p\leq e} C^{p}\otimes \bU(-p-2)  \\
     \oplus \\
     \delta_{0,\epsilon}\hat{\fg}\otimes \bU(-j-1)
\end{array} \to 
\begin{array}{c}
     \oplus_{p\leq e} C^{p}\otimes \bU(-p-1) \\
     \oplus \\
     \delta_{0,\epsilon}\hat{\fg}\otimes \bU(-j)\\ \oplus \\\fg \otimes \bU (-1) 
\end{array} \to \fg\otimes \bU \to \rhop_*\pip^*\cL\to 0.
\end{equation}
\end{small}

    The result follows by noticing that, as in the proof of Theorem \ref{thm_res_structure}, $C^i$ is a direct sum of $u_i$ trivial $G$-representations, where $u_i$ is the cardinality of $\{ j\mid \deg(f_j)=i+2\}$.
\end{proof}    

\subsubsection{Identifying the Lie bracket}

Let us end this section by focusing on the $G$-equivariant morphism $\fg\otimes \bU(-1) \to \fg\otimes \bU$. By taking global sections this morphism is given by a trivial $G$-factor of $\fg\otimes \fg \otimes \fg$, where we silently use the Killing form to identify $\fg\cong\fg^\vee$.

\begin{proposition}\label{prop-LieB}
 The morphism $\fg\otimes \bU(-1) \to \fg\otimes \bU$ appearing in the resolution of Theorem \ref{thm_resolution_pushforward} is given by the Lie bracket in $\fg$. 
\end{proposition}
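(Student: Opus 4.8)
The plan is to identify the $G$-equivariant morphism $\phi : \fg\otimes \cO(-1) \to \fg\otimes \cO$ with the Lie bracket by a representation-theoretic argument, namely by showing that the space of $G$-equivariant maps in which $\phi$ lives is one-dimensional, so that $\phi$ must be a scalar multiple of the bracket, and that this scalar is non-zero. First I would note that, since both source and target are homogeneous bundles, the morphism is determined by its induced map on global sections after the appropriate twist; concretely, taking $H^0$ of $\phi(1)$ yields a $G$-equivariant linear map $\fg \to \fg\otimes \fg$ (using $H^0(\cO(1)) = \fg^\vee \simeq \fg$ via the Killing form), equivalently a $G$-invariant element of $\fg^\vee\otimes\fg\otimes\fg \simeq \fg\otimes\fg\otimes\fg$. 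Thus the whole question reduces to computing the space $(\fg\otimes\fg\otimes\fg)^G$ of invariant trivectors and pinning down which invariant our specific map selects.

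The key step is then the dimension count. For a simple Lie algebra $\fg$, the multiplicity of the trivial representation in $\fg^{\otimes 3}$ is controlled by the decomposition $\fg\otimes\fg = \mathrm{Sym}^2\fg \oplus \wedge^2\fg$, where $\wedge^2\fg$ contains exactly one copy of the adjoint $\fg$ (the image of the Lie bracket $\wedge^2\fg \to \fg$), and $\mathrm{Sym}^2\fg$ contains exactly one copy of $\fg$ if and only if $\fg$ admits a non-zero symmetric invariant cubic form, i.e.\ in type $A_n$ with $n\geq 2$. Hence $(\fg\otimes\fg\otimes\fg)^G = \Hom_G(\fg, \fg\otimes\fg)$ is one-dimensional for all types except type $A_n$ ($n\ge 2$), where it is two-dimensional, spanned by the bracket (the antisymmetric structure constants $f_{abc}$) and the totally symmetric invariant (the $d_{abc}$ tensor). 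I would invoke this standard fact, referring to the invariant theory of simple Lie algebras.

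The main obstacle is therefore the type $A_n$ case, where a priori $\phi$ could be a linear combination of the bracket and the symmetric cubic invariant; I must rule out any contribution from the symmetric part. To do this I would exploit that $\phi$ arises as a differential in a minimal resolution, so its composition with the surjection $\fg\otimes\cO \to \cJ_D(d-1)$ must vanish and, more to the point, $\phi$ must fit equivariantly into the map $\bad$ of Theorem~\ref{main simply laced}, whose image is precisely $\IM(\bad) = \bA$; since $\bad$ is \emph{defined} as the $\bU$-linear extension of the Lie bracket on degree-$1$ parts, the symmetric component — which would correspond to a different, non-skew extension — cannot appear without destroying the skew-symmetry that underlies the contact structure and the identification of $\ker(\bad)$ with $\bigoplus_i \bU(-e_i-1)$. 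Concretely, I would argue that the image sheaf $\IM(\phi)$ has rank equal to $\dim\fg - n$ (the generic rank of $\ad_x$ for regular $x$), which matches the bracket but would be exceeded by any non-trivial symmetric admixture; this rank condition selects the bracket uniquely up to scalar. Finally, non-vanishing of the scalar follows because $\phi$ is a genuine nonzero differential in the minimal resolution of Theorem~\ref{thm_res}, so $\phi$ is not the zero map, completing the identification.
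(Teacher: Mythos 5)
Your reduction to invariant theory is a genuinely different strategy from the paper's, and it works cleanly in every type except $A_n$: a $G$-equivariant degree-one map $\fg\otimes\cO(-1)\to\fg\otimes\cO$ is determined by a $G$-invariant tensor in $\fg\otimes\fg\otimes\fg$, and outside type $A_{n\geq 2}$ this space is one-dimensional, spanned by the bracket, so your multiplicity count together with the easy non-vanishing observation settles those cases at once. By contrast, the paper argues uniformly and with no case split: via Lemma \ref{lem_iso_xi} the relevant Koszul differential $\hat{\Omega}_X\otimes\cL\to\cL$ is identified with contraction against $d\theta'$, and since the contact structure on the adjoint variety comes from the Kostant--Kirillov form, the resulting tensor in $\wedge^2\fg\otimes\fg$ is the Lie bracket up to scalar.

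However, your treatment of type $A_n$ ($n\geq 2$), where the invariant space is two-dimensional, has a genuine gap, in two respects. First, the appeal to Theorem \ref{main simply laced} and to the claim that $\phi$ ``fits equivariantly into $\bad$'' with $\IM(\bad)=\bA$ is circular: Theorem \ref{main simply laced} is proved in \S\ref{section: non invariant} using precisely Proposition \ref{prop-LieB} (through Lemma \ref{lemma-kerad}, whose proof opens by invoking it), so at this stage you may not assume that the resolution differential agrees with $\bad$, nor that $\ker(\bad)\simeq\bigoplus_{i}\bU(-e_i-1)$. Second, the fallback rank argument is unsubstantiated: from the resolution of Theorem \ref{thm_res} one only knows that the kernel of the \emph{full} middle differential has rank $n$, and since $\ker\phi$ is the intersection of that kernel with the summand $\fg\otimes\cO(-1)$, this yields only the lower bound $\rank \IM(\phi)\geq \dim\fg-n$; nothing available before the proposition is proved gives the upper bound $\rank\IM(\phi)\leq\dim\fg-n$, because the rank-$n$ kernel need not sit inside $\fg\otimes\cO(-1)$ --- knowing that it does is essentially the content of Lemma \ref{lemma-kerad}, again downstream of the proposition. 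Moreover, even granting the rank, the assertion that any nontrivial admixture of the symmetric $d$-tensor strictly increases the generic rank (i.e.\ that for regular semisimple $x$ the symmetric map is generically invertible on the centralizer $\fh_x$, where $\ad_x$ vanishes) is plausible but stated without proof. To salvage your route in type $A$ you would need an independent verification of one of these two points; otherwise the paper's contact-geometric identification is the shorter path.
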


\begin{proof}
Following the proof of Theorem \ref{thm_resolution_pushforward}, the term $\fg\otimes \bU$ comes from the term $H^0(\cL)\otimes \bU$ in Weyman's resolution \eqref{eq_Weyman_res} and $\fg\otimes \bU(-1)$ comes from $H^0(\hat{\Omega}_X\otimes \cL)\otimes \bU(-1)$ . Thus, the morphism $\eta:\fg\otimes \bU(-1) \to \fg\otimes \bU$ is the pushforward, through the second projection from $X\times \fg$, of the morphism $\hat{\Omega}_X\otimes \cL \boxtimes \bU(-1) \to \cL\boxtimes \bU$ in the Koszul complex of Weyman's resolution. By Lemma \ref{lem_iso_xi}, we can identify, due to the contact structure, $\hat{\Omega}_X\otimes \cL \cong \hat{\cT}_X$, so we obtain $$\eta':\hat{\cT}_X\boxtimes \bU(-1) \to \hat{\Omega}_X\otimes \cL\boxtimes \bU(-1) \to \cL\boxtimes \bU.$$ 
Indeed, if $\theta':\hat{\cT}_X \to \cL$ is the map induced by the contact structure $\theta:\cT_X\to \cL$ then $d\theta':\wedge^2 \hat{\cT}_X \to \cL$, and the map $d\theta'$ induces the isomorphism $\hat{\cT}_X\cong \hat{\Omega}_X\otimes \cL$. Thus we get $d\theta'\in H^0(\hat{\cT}_X\otimes \cL)\subset H^0(\hat{\cT}_\PP\otimes \cL)\subset \wedge^2 \fg \otimes \fg$, where in the last equality we have used the Killing isomorphism. Since the contact structure is defined from the Kostant-Kirillov form (see \cite{beauville:fano-contact}), $d\theta'\in \wedge^2 \fg \otimes \fg$ is the Lie bracket (modulo scalar).

Finally notice that $H^0(\hat{\cT}_X)$ is an extension of a trivial factor with $H^0(\cT_X)\cong \fg$. Since the map $\hat{\Omega}_X\otimes \cL \to \cL$ in the Koszul complex is just the contraction (after $X\times \fg$ is identified with the total space of $\hat{\Omega}_\PP\otimes \cL$), we obtain that the restriction $\eta$ of $H^0(\eta'):H^0(\hat{\cT}_X) \otimes \bU(-1) \to H^0(\cL) \otimes \bU$ to $\fg \otimes \bU(-1)\subset H^0(\hat{\cT}_X)\otimes \bU(-1)$ is the Lie bracket. 
\end{proof}
    
   \section{Logarithmic derivations for simply laced adjoint discriminants} 
    \label{section: non invariant}
    
The main goal of this section is to prove Theorem \ref{main simply laced}. The idea is to interpret the results of the previous section in terms of derivations, inspired by the treatment of invariant derivations of \cite{orlik-terao:arrangements}. We use the notation of the introduction: $G$ will be a simple linear algebraic group with a Lie algebra $\fg$ of \textit{simply laced type}. Choosing a maximal torus $T\subset G$, we get a Weyl group  $W:=N_G(T)/Z_G(T)$ acting linearly on the Lie algebra $\fh$ of $T$. We write  $\bU:=\bk[\fg]$, $\bS:=\bk[\fh]$ and recall that Chevalley's restriction theorem gives $\bR:=\bU^G \cong \bS^W$, i.e., in terms of GIT, we have $\fg \git G \simeq \fh /W \simeq \bk^n$ and $\fh$ is a slice for the action of $G$ on $\fg$.
Set $n=\dim(\fh)$.

\subsection{$G$-invariant logarithmic derivations} 
 
   Let $\Der_\bU(-\log(\Delta))$ be the $\bU$-module of logarithmic derivations of the discriminant locus $D = X^\vee = \VV(\Delta)$, where $X\subset \PP(\fg)$ is the $G$-adjoint variety. 
   
   We define $\Der_\bS^W$ as the $W$-invariant $\bS$-derivations and $\Der_\bU^G$ as the $G$-invariant $\bU$-derivations. Since $\bR=\bU^G$, for any $\eta\in \Der_\bU^G$, we have $\eta(\bR)\subset \bR$, so that $\eta$ defines an element $\pi_G(\eta)\in \Der_\bR$. Similarly, since $\bR=\bS^W$, any $\eta\in \Der_\bS^W$, defines an element $\pi_W(\eta)\in \Der_\bR$. The maps $\pi_G:\Der_\bU^G \to \Der_\bR$ and $\pi_W:\Der_\bS^W\to \Der_\bR$ are morphisms of $\bR$-modules. 
   Consider the restriction $\delta:=\Delta|_\fh$. Since $\fg$ is of simply laced type, $\delta$ is a square, and we may write
   $$
   \sqrt \delta = \Pi_{\alpha \in \Phi^+}\delta_\alpha.
   $$ 
   This product defines the Weyl arrangement associated with the Weyl group $W$. Recall that, since $W$ is generated by reflections, the associated Weyl arrangement is the union of the reflecting hyperplanes of all (complex) reflections in $W$. We have: 
   \[
   \Der_\bR(-\log(\delta)):= \{\eta\in \Der_\bR \mid \eta(\delta)\in (\delta)\subset \bR\} \cong \Der_\bR(-\log(\sqrt \delta)).\]
   Seeing $\delta$ as an element in $S$, we can also consider $\Der_\bS(-\log(\delta))$, i.e., the $\bS$-module of logarithmic derivations defined by $\delta$.
   
   Since we are in characteristic zero and since $\Frac(\bS)$ is finite algebraic over $\Frac(\bR)$, for any $\eta\in \Der_\bR\subset \Der_{\Frac(\bR)}$, there exists a unique derivation $\eta_\bS\in \Der_{\Frac(\bS)}^W$ such that $\eta_\bS|_\bR=\eta$. Set:
   \begin{align*}
   \Der_\bR^0:=&\{\eta\in \Der_{\Frac(\bR)} \mid \eta_\bS(\bS)\subset \bS\}=\\
   =&\{\eta\in \Der_{\Frac(\bR)} \mid \eta_\bS\in \Der_\bS\subset \Der_{\Frac(\bS)}\}.
\end{align*}

   Combining several results that appear in \cite[Section 6.3]{orlik-terao:arrangements}, we have that: 
\begin{theorem}
   \label{thm_terao_der0}
       The morphism $\pi_W:\Der_\bS^W \to \Der_\bR$ induces a $\bR$-module isomorphism between $\Der_\bS^W$ and $\Der_\bR^0$. Furthermore, $\Der_\bR^0\cong\Der_\bR(-\log(\delta))$ is a free $\bR$-module and $\Der_\bS(-\log(\delta))\cong  \Der_\bS^W\otimes_\bR \bS$ is a free $\bS$-module.
   \end{theorem}

   Let $\fg=\fh\oplus\bigoplus_\alpha \fg_\alpha$ be a Cartan decomposition of $\fg$ and identify $\fg$ and $\fg^\vee$ via the Killing form. Let $(x_1,\ldots,x_\ell)$ be a Killing-orthonormal basis of $\fg^\vee$ such that $(x_1,\ldots,x_n)$ is a basis of $\fh^\vee$ and for any $n+1\leq i\leq \ell$, there exists a root $\alpha$ such that $x_i\in \fg_\alpha\oplus \fg_{-\alpha}$. 
   Under these assumptions, $\bS=\bU/(x_i)_{i=n+1}^{\ell}$. We will denote by $\bI:=(x_i)_{i=n+1}^{\ell}$ the ideal defining $\bS$.
    Any homogeneous element $\eta\in \Der_\bU$ of degree $d$ can be therefore written as 
   \begin{equation}\label{eq-defeta}
   \eta=\sum_{i=1}^{\ell} \eta_i \frac{\partial}{\partial x_i}, \qquad \mbox{with $\eta_i:=\eta(x_i) \in \bU_d$}.
   \end{equation}
   Moreover, let us denote by $(g_{i,j})_{1 \le i,j \le \ell} $ the matrix associated to the linear action of $g\in G$ on $\fg^\vee$ in the basis $(x_1,\ldots,x_\ell)$, i.e. $g(x_i)=\sum_{j=1}^{\ell} g_{i,j}x_j$. We will prove some technical results which will be needed for our main theorem. 
   
   \begin{lemma}\label{lemma-actionder}
    Let $\eta\in \Der_\bU^G$. Then for, any $g\in G$, 
    \[g(\eta_i)=\sum_{j=1}^{\ell} g_{i,j}\eta_j,\]
    where $\eta_i$ are as defined in (\ref{eq-defeta}).
   \end{lemma}
   
   \begin{proof}
   The action of $g\in G$ on $\Der_\bU$ is given by:
   $$
   (g\cdot \eta)(f):= g\cdot (\eta(g^{-1}\cdot f)).
   $$
   Since, for all $j=1,\ldots,\ell$, 
   $$ 
   g\cdot\left(\frac{\partial}{\partial x_i} (g^{-1})\cdot x_j)\right) = g\cdot\left( \frac{\partial}{\partial x_i} \left(\sum_{k=1}^{\ell} g^{-1}_{j,k}x_k\right)\right)=g^{-1}_{j,i},
   $$
   we deduce that $g\cdot \frac{\partial}{\partial x_i}=\sum_{j=1}^{\ell} g^{-1}_{j,i}\frac{\partial}{\partial x_j}$. Let $\eta\in \Der_\bU^G$, i.e., $g\cdot \eta=\eta$ for any $g\in G$. This means that, for any $g\in G$,
   $$
   \sum_{1\le i,j \le \ell}g(\eta_i)g^{-1}_{j,i}\frac{\partial}{\partial x_j} = \sum_{j=1}^{\ell} \eta_j \frac{\partial}{\partial x_j}.$$
   This proves the claim.
   \end{proof}
   
   \begin{lemma}
    Let $\eta\in \Der_\bU^G$. 
    \begin{itemize}
       \item If $i\leq n$ then $\frac{\partial f}{\partial x_i}|_\fh=\frac{\partial f|_\fh}{\partial x_i}$ for any $f\in \bU$;
       \item if $i\geq n+1$ then $\eta_i|_\fh=0$.
    \end{itemize}
   \end{lemma}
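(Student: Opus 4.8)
The plan is to treat the two bullets separately, since only the second uses the invariance hypothesis. In fact $\eta$ plays no role in the first bullet: it is a purely formal statement about the coordinate derivation $\partial/\partial x_i$ and the restriction map $r\colon \bU \to \bS=\bU/\bI$, where $\bI=(x_{n+1},\ldots,x_{\ell+1})$. The key observation is that for $i\le n$ the derivation $\partial/\partial x_i$ preserves $\bI$: since $\partial x_j/\partial x_i=0$ whenever $i\le n< j$, the Leibniz rule gives $\partial_{x_i}(x_j g)=x_j\,\partial_{x_i}g\in\bI$ for $j\ge n+1$, so $\partial_{x_i}(\bI)\subseteq\bI$. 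Hence $\partial_{x_i}$ descends to a derivation $\bar\partial$ on $\bU/\bI=\bS$ with $r\circ\partial_{x_i}=\bar\partial\circ r$, and since $\bar\partial(x_k)=\delta_{ik}$ for $k\le n$ we get $\bar\partial=\partial/\partial x_i$ on $\bS$. This is exactly the asserted identity $(\partial_{x_i}f)|_\fh=\partial_{x_i}(f|_\fh)$.

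For the second bullet I would invoke the torus $T\subset G$ with $\mathrm{Lie}(T)=\fh$ together with the transformation law established in the previous lemma. Applied to $t\in T$ it reads $t(\eta_i)=\sum_{j} t_{i,j}\,\eta_j$, where $(t_{i,j})$ is the matrix of $t$ acting on $\fg^\vee$ in the basis $x_1,\ldots,x_{\ell+1}$. The decisive step is to restrict this identity to $\fh$. Because $T$ fixes $\fh$ pointwise under the adjoint action, the induced action on functions satisfies $(t\cdot f)|_\fh=f|_\fh$; applying this to $f=\eta_i$ and using that the $t_{i,j}$ are scalars yields, for every $t\in T$, the relations $\eta_i|_\fh=\sum_{j} t_{i,j}\,(\eta_j|_\fh)$ inside $\bS$.

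To finish I would read these relations as the statement that the vector $a:=(\eta_1|_\fh,\ldots,\eta_{\ell+1}|_\fh)\in \bS\otimes_\CC \fg^\vee$ is fixed under the $T$-action on the factor $\fg^\vee$ (note $T$ acts trivially on $\bS$, as it fixes $\fh$). Thus $a$ lies in $\bS\otimes_\CC(\fg^\vee)^T$. Since the adjoint action of a maximal torus on $\fg$ has fixed subspace exactly $\fh$, we have $(\fg^\vee)^T=\fh^\vee=\mathrm{span}(x_1,\ldots,x_n)$, i.e.\ the root directions $x_{n+1},\ldots,x_{\ell+1}$ carry no nonzero $T$-invariant. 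Hence the components of $a$ along these directions vanish, which is precisely $\eta_i|_\fh=0$ for $i\ge n+1$.

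The only delicate point — the main obstacle — is the bookkeeping in this second bullet: one must check that restricting to $\fh$ converts the torus transformation law into a genuine constant-coefficient linear invariance (scalar $t_{i,j}$, trivial $T$-action on $\bS$), so that $T$-invariance of the restricted vector can be analysed weight space by weight space. Once this reduction is in place the conclusion is immediate from $(\fg^\vee)^T=\fh^\vee$, and everything else is routine.
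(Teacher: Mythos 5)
Your proof is correct and follows essentially the same route as the paper's: the first bullet is the observation that restricting to $\fh$ amounts to reducing modulo $\bI$, and the second applies the transformation law of the previous lemma to $t\in T$, uses that $T$ fixes $\fh$ pointwise so that $(t\cdot f)|_\fh=f|_\fh$, and concludes from the $T$-weight decomposition of $\fg^\vee$, whose zero-weight space is $\fh^\vee$. The only cosmetic difference is that the paper performs the weight-space analysis explicitly, using the eigenvectors $x_i\pm x_{i'}\in\fg_{\pm\alpha}$ and choosing $t$ with $\alpha(t)\neq 0$, whereas you package it as $T$-invariance of the vector of restrictions inside $\bS\otimes_\CC(\fg^\vee)^T$ (a bookkeeping step that indeed works out, since the Killing-orthonormality of the basis makes the matrices $(t_{i,j})$ orthogonal).
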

   
   \begin{proof}
   The first equality is due to the fact that restricting to $\fh$ amounts to working modulo the ideal $\bI$ that defines $\bS$ in $\bU$. For the second equality, let us fix a root $\alpha$ such that $y_i:=x_i+x_{i'}\in \fg_\alpha$ and $y_i':=x_i-x_{i'}\in \fg_{-\alpha}$, with $x_i,x_{i'}\in \fg_\alpha\oplus \fg_{-\alpha}$. Moreover, let $t\in T$ be any point in a maximal torus $T\subset G$ stabilizing $\fh$. Then, since the basis $(\{ x_j\}_{j\leq n}, \{y_i\}_{i\geq n+1})$ is compatible with the Cartan decomposition defined by $T$, $T$ acts diagonally on $\fg^\vee$ in this basis. More precisely $t\cdot y_i = \alpha(t)y_i$, where $\alpha$ is seen as a character of $T$. Since, by Lemma \ref{lemma-actionder}, the action of $G$ on $\eta_i$ behaves as the action of $G$ on $x_i$, we also get that $t\cdot (\eta_i\pm\eta_{i'}) = \alpha(t)(\eta_i\pm\eta_{i'})$. Since $T$ acts as the identity on $\fh$ we deduce that $t\cdot (\eta_i|_\fh) = \eta_i|_\fh$ and, combined with its diagonal action, we also deduce that $t \cdot ((\eta_i\pm\eta_{i'})|_\fh)=(t\cdot (\eta_i\pm\eta_{i'}))|_\fh$. By choosing $t\in T$ such that $\alpha(t)\neq 1$, we obtain that $(\eta_i\pm\eta_{i'})|_\fh=0$, i.e., that $\eta_i|_\fh=0$ for any $i\geq n+1$. 
   \end{proof}
   
   Recall that, by the Chevalley–Shephard–Todd theorem, there exists a basis $(F_1,\ldots,F_n)$ of homogeneous $G$-invariant polynomials in $\bU$, i.e., $\bR\cong \bk[F_1,\ldots,F_n]$ and denote by $f_1,\ldots,f_n$ their restrictions to $\bS$. 
   For $i=1,\ldots,n$, let us define $\mu_i\in \Der_\bS$ and $\nu_i\in \Der_\bU$ by
   \begin{equation}\label{eq-defmunu}
   \mu_i:=\sum_{j=1}^n\frac{\partial f_i}{\partial x_j}\frac{\partial }{\partial x_j}, \qquad \nu_i:=\sum_{j=1}^{\ell}\frac{\partial F_i}{\partial x_j}\frac{\partial }{\partial x_j}.
   \end{equation}
   
   \begin{lemma}
   The derivations $\nu_i$ for $i=1,\ldots,n$ are $G$-invariant.
   \end{lemma}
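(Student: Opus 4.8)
The plan is to verify that the components of $\nu_i$ obey exactly the transformation law established in the previous lemma, namely $g(\eta_i)=\sum_{j}g_{i,j}\eta_j$, and to note that this law is in fact sufficient for $G$-invariance (the proof of that lemma is reversible). Conceptually, $\nu_i=\nabla(F_i)$ is the image of the differential $dF_i\in \fg^\vee\otimes \bU$ under the Killing isomorphism $\fg^\vee\cong \fg$; since the exterior differential is $G$-equivariant and $F_i$ is $G$-invariant, $dF_i$ is a $G$-invariant $1$-form, so the whole assertion reduces to the fact that the Killing isomorphism intertwines the $G$-actions on $\Omega^1_\bU$ and on $\Der_\bU$. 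I would make this precise through the short computation below.

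First I would recall from the proof of the earlier lemma that the action on the basic derivations is $g\cdot \frac{\partial}{\partial x_k}=\sum_{j}g^{-1}_{j,k}\frac{\partial}{\partial x_j}$, and that the identity $g\cdot(\eta(f))=(g\cdot\eta)(g\cdot f)$ holds for all $\eta\in\Der_\bU$ and $f\in\bU$. Applying the latter with $\eta=\frac{\partial}{\partial x_k}$ and $f=F_i$, and using $g\cdot F_i=F_i$, I obtain
\[
g\!\cdot\!\Big(\frac{\partial F_i}{\partial x_k}\Big)=\Big(g\cdot\frac{\partial}{\partial x_k}\Big)(F_i)=\sum_{l}g^{-1}_{l,k}\frac{\partial F_i}{\partial x_l}.
\]
Then I would expand $g\cdot\nu_i=\sum_{k}\big(g\cdot\frac{\partial F_i}{\partial x_k}\big)\big(g\cdot\frac{\partial}{\partial x_k}\big)$ and collect the coefficient of $\frac{\partial}{\partial x_j}$, so that the desired equality $g\cdot\nu_i=\nu_i$ becomes equivalent to the identity
\[
\sum_{k}g^{-1}_{j,k}\,g^{-1}_{l,k}=\delta_{j,l}\qquad\text{for all }j,l.
\]

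The remaining point, and the only genuine input, is that this identity is precisely the statement that the matrix $(g_{i,j})$ is orthogonal, i.e. $g^{-1}={}^{t}g$. This holds because the basis $x_1,\ldots,x_{\ell+1}$ was chosen Killing-orthonormal and the action of $G$ on $\fg^\vee$ is dual to the adjoint action, which preserves the Killing form; hence orthogonality is automatic and the identity follows at once. I expect no deeper difficulty here: the only obstacle is the bookkeeping with dual conventions (which of $g$, $g^{-1}$ appears and which index is summed), and this is entirely fixed once one adopts, as in the preceding lemmas, the normalisation $g(x_i)=\sum_{j}g_{i,j}x_j$ with $(g_{i,j})$ orthogonal. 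The same orthogonality shows conversely that $g(\tfrac{\partial F_i}{\partial x_k})=\sum_l g_{k,l}\tfrac{\partial F_i}{\partial x_l}$, i.e. the components of $\nu_i$ satisfy the transformation law of the previous lemma, which is the conceptual reason why $\nu_i\in\Der_\bU^G$.
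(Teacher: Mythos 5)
Your proof is correct and is in substance the same direct computation as the paper's: both expand $g\cdot\nu_i$ using the action $(g\cdot\eta)(f)=g\cdot(\eta(g^{-1}\cdot f))$, the formula $g\cdot\frac{\partial}{\partial x_k}=\sum_j g^{-1}_{j,k}\frac{\partial}{\partial x_j}$ from the preceding lemma, and the $G$-invariance of $F_i$. The one point where you are actually more careful than the paper is the orthogonality input: the paper's step $g\left(\frac{\partial F_i}{\partial x_j}\right)=\left(\sum_h g_{j,h}\frac{\partial}{\partial x_h}\right)(F_i)$ is consistent with its own earlier formula for $g\cdot\frac{\partial}{\partial x_j}$ only because $g^{-1}={}^t g$, which holds since the basis $x_1,\ldots,x_{\ell+1}$ is Killing-orthonormal and the (co)adjoint action preserves the Killing form; the paper uses this tacitly, whereas you isolate it as the sole genuine input. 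That diagnosis is right: invariance of the gradient field $\nu_i=\nabla(F_i)$ really does require invariance of the bilinear form used to turn the invariant $1$-form $dF_i$ into a vector field (with a basis that is not orthonormal for an invariant form the statement would fail), so making the orthogonality explicit is a substantive clarification rather than pedantry.
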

   
   \begin{proof}
   Let us compute $g\cdot \nu_i$, for $i=1,\ldots,n$. We obtain
   $$ (g\cdot \nu_i)(x_k)=\sum_j g\left(\frac{\partial F_i}{\partial x_j}\right) g\left(\frac{\partial (g^{-1}\cdot x_k)}{\partial x_j}\right)=\sum_j g\left(\frac{\partial F_i}{\partial x_j}\right) g^{-1}_{k,j}.  $$
   By the $G$-invariance of $F_i$ we also deduce that
   $$ g\left(\frac{\partial F_i}{\partial x_j}\right)=\left(g\cdot \frac{\partial }{\partial x_j}\right)(g(F_i))=\left(\sum_h g_{j,h}\frac{\partial }{\partial x_h}\right)(F_i)=\sum_h g_{j,h}\frac{\partial F_i}{\partial x_h} .$$
   Combining the two expressions, we have 
   $$ (g\cdot \nu_i)(x_k)=\sum_{j,h}g^{-1}_{k,j}g_{j,h}\frac{\partial F_i}{\partial x_h}=\sum_h \delta_{k,h}\frac{\partial F_i}{\partial x_h}=\frac{\partial F_i}{\partial x_k}=\nu_i(x_k),$$
   which means exactly that $g\cdot \nu_i = \nu_i$ for any $g\in G$.
   \end{proof}
   
   The same proof shows that $\mu_i\in \Der_\bS^W$. Furthermore, it is known that $\mu_1,\ldots,\mu_n$ is an $\bR$-basis of $\Der_\bS^W$, see for instance \cite[Section 2]{Yos}.
   
   \begin{proposition}
    There exists a morphism of $\bR$-modules $\pi:\Der_\bU^G\to \Der_\bS^W$ making the following diagram commutative:
    $$
    \xymatrix{
\Der_\bU^G \ar[d]_-\pi \ar[r]^-{\pi_G}_-\cong &\Der_\bR(-\log(\Delta))\ar[d]^-\cong\\
\Der_\bS^W \ar[r]^-\cong_-{\pi_W} & \Der_\bR(-\log(\delta))} .
    $$
   \end{proposition}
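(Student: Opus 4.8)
The plan is to build $\pi$ explicitly as restriction to the Cartan subalgebra and then read off commutativity from the two lemmas just proved. First I would dispose of the right-hand vertical arrow: under the Chevalley isomorphism $\bR=\bU^G\cong\bS^W$, which is induced by restriction to $\fh$, the $G$-invariant discriminant $\Delta\in\bU^G$ is sent exactly to $\delta=\Delta|_\fh\in\bS^W$. Hence $\Delta$ and $\delta$ represent the same element of $\bR$, so the principal ideals coincide and $\Der_\bR(\Delta)=\Der_\bR(\delta)$, the right arrow being the identity. This reduces the claim to constructing $\pi$ and checking that the square commutes, the bottom arrow $\pi_W$ being an isomorphism onto $\Der_\bR(\delta)$ by Terao's theorem above.

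To define $\pi$, take $\eta=\sum_{i=1}^{\ell+1}\eta_i\,\partial_{x_i}\in\Der_\bU^G$, where $\partial_{x_i}:=\partial/\partial x_i$. The second lemma gives $\eta_i|_\fh=0$ for $i\geq n+1$, so $\eta(\bI)\subseteq\bI$ for the ideal $\bI=(x_{n+1},\dots,x_{\ell+1})$ cutting out $\fh$; by the Leibniz rule $\eta$ therefore descends to a derivation of $\bS=\bU/\bI$, and I set $\pi(\eta):=\sum_{i=1}^{n}(\eta_i|_\fh)\,\partial_{x_i}\in\Der_\bS$. This assignment is plainly $\bR$-linear. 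To see that $\pi(\eta)\in\Der_\bS^W$, I use that the $W$-action on $\bS$ is the one induced by $N_G(T)\subset G$ on $\fh$: since $\eta$ is in particular $N_G(T)$-invariant and restriction to $\fh$ intertwines the $N_G(T)$-action on $\bU$ with the $W$-action on $\bS$, the descended derivation $\pi(\eta)$ is $W$-invariant.

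Commutativity is then a direct computation on invariants. For $F\in\bR=\bU^G$ with restriction $f=F|_\fh$, the first lemma yields $(\partial_{x_i}F)|_\fh=\partial_{x_i}f$ for $i\leq n$, while the second lemma kills the terms with $i\geq n+1$, so that
\[
\eta(F)|_\fh=\sum_{i=1}^{\ell+1}(\eta_i|_\fh)\,(\partial_{x_i}F)|_\fh=\sum_{i=1}^{n}(\eta_i|_\fh)\,\partial_{x_i}f=\pi(\eta)(f).
\]
The left-hand side is the image of $\pi_G(\eta)$ under the Chevalley identification $\bR\cong\bS^W$, and the right-hand side is $\pi_W(\pi(\eta))$ evaluated at $f$; thus $\pi_W\circ\pi=\pi_G$ (composed with the identity right arrow), which is exactly the asserted commutativity. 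As a bonus, since $\pi_W$ takes values in $\Der_\bR(\delta)=\Der_\bR(\Delta)$, commutativity shows a posteriori that $\pi_G$ lands in $\Der_\bR(\Delta)$, justifying the top arrow.

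The Leibniz check and the computation on invariants are routine; the one point requiring genuine care is the $W$-equivariance of $\pi$, namely verifying that the $N_G(T)$-action on $\bU$ descends compatibly to the $W$-action on $\bS$ so that $G$-invariance really specialises to $W$-invariance. If, as the $\cong$ labels suggest, one further wants $\pi$ (equivalently $\pi_G$) to be an isomorphism, the genuine obstacle is surjectivity: lifting a $W$-invariant logarithmic derivation on $\fh$ back to a $G$-invariant one on $\fg$. This does not follow formally from the construction, and I expect it to require Terao's freeness result together with a rank or Hilbert-series comparison along the slice $\fh\subset\fg$; that is where the real work lies.
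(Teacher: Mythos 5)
Your construction of $\pi$ and the commutativity check are sound and essentially the paper's own first half: the same restriction map $\pi(\eta)(f|_\fh):=\eta(f)|_\fh$, well defined because $\eta_i|_\fh=0$ for $i\geq n+1$ (your packaging of this as $\eta(\bI)\subseteq\bI$, so that $\eta$ descends to $\bS=\bU/\bI$, is if anything cleaner than the paper's direct verification), and the same computation $\eta(F)|_\fh=\pi(\eta)(F|_\fh)$ on invariants, using the two preceding lemmas, giving $\pi_W\circ\pi=\pi_G$. Your $N_G(T)$-descent argument for the $W$-invariance of $\pi(\eta)$ is in fact more explicit than the paper, which merely asserts that the image of $\pi$ is $W$-invariant; and identifying the right-hand vertical arrow with the restriction isomorphism $\Delta\mapsto\delta$ is likewise as in the paper.

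There is, however, a genuine gap: the $\cong$ decorations are part of the statement — in particular that $\pi_G$ is an isomorphism onto $\Der_\bR(\Delta)$, equivalently (given Terao's theorem for $\pi_W$) that $\pi$ is an isomorphism — and you prove neither injectivity nor surjectivity; these occupy the bulk of the paper's proof. You flag surjectivity but guess the wrong mechanism: no Hilbert-series or rank comparison is needed, because the lemmas immediately preceding the proposition already produced explicit $G$-invariant lifts, namely $\nu_i=\nabla(F_i)\in\Der_\bU^G$ with $\pi(\nu_i)=\mu_i=\nabla(f_i)$, and $\mu_1,\ldots,\mu_n$ is an $\bR$-basis of $\Der_\bS^W$ (Saito's basic derivations, see \cite{Yos}); surjectivity is then immediate. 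Injectivity, which you do not flag at all, is the subtler point and does not follow formally from the construction: a priori an $\eta\in\Der_\bU^G$ with $\pi(\eta)=0$ is only known to vanish after restriction to the one fixed Cartan subalgebra $\fh$. The paper's argument runs: $\eta(f)|_\fh=0$ for all $f\in\bR$ forces $\eta|_\bR=0$ by Chevalley; then for \emph{every} Cartan subalgebra $\fh'\subset\fg$ the analogous restriction $\pi'(\eta)$ is the unique derivation extending $0\in\Der_\bR$, uniqueness holding because $\Frac(\bS')$ is a finite extension of $\Frac(\bR)$ in characteristic zero, so $\pi'(\eta)=0$, i.e. $\eta(f)|_{\fh'}=0$ for all $f\in\bU$; since the Cartan subalgebras cover a dense subset of $\fg$, this gives $\eta=0$. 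Without this density-and-unique-extension step (or a substitute), and without the surjectivity via the gradient lifts, the proposition as stated remains unproven.
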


   \begin{proof}
   Let us explicitly define $\pi$. Let $\eta=\sum_i \eta_i \frac{\partial}{\partial x_i} \in \Der_\bU^G$. For any polynomial $f\in \bk[\fg]$, we can define $\pi(\eta)(f|_{\fh}):= \eta(f)|_\fh$. This is well defined because if $f|_\fh=0$ then $f\in \bI$ and, by the above lemmas, $$\eta(f)|_\fh= \sum_i \eta_i|_\fh \frac{\partial f}{\partial x_i}|_\fh=\sum_{i \leq n} \eta_i|_\fh \frac{\partial f}{\partial x_i}|_\fh= \sum_{i\leq n} \eta_i \frac{\partial f|_\fh}{\partial x_i}=0.$$ 
   
   Even though a priori $\pi$ is defined as a morphism $\pi:\Der_\bU^G\to \Der_\bS$, its image is $W$-invariant. Chevalley theorem tells us that there exists an isomorphism between $\Der_\bR(-\log(\Delta))$ and $\Der_\bR(-\log(\delta))$ given by restriction, so, modulo this isomorphism, $\pi_W \circ \pi = \pi_G$. As a consequence, we deduce $\Der_\bU^G\subset \Der_\bU(-\log(\Delta))$.\medskip
    
    Having this set up, we need to show that $\pi$ is an isomorphism to conclude. Let us begin with the injectivity. Let $\theta\in \Der_\bS^W$ and $\eta,\eta'\in \Der_\bU^G$ such that $\pi(\eta)=\pi(\eta')=\theta$ and denote their difference by $\xi:=\eta-\eta'\in \Der_\bU^G$. By linearity $\pi(\xi)=0$. Composing with $\pi_W$ we deduce that, for any $f\in \bR$, $\xi(f)|_\fh=0$. Since $\xi(f)\in \bR$, by Chevalley's Theorem we have that $\xi(f)=0$ for any $f\in \bR$.
    
    Let $\fh'$ be any Cartan subalgebra of $\fg$ for which, as before, we find a basis $(x_1',\ldots,x_\ell')$, write $\xi=\sum_i \xi_i' \frac{\partial}{\partial x_i'}$ and define $\pi':\Der_\bU^G \to \Der_{\bS'}^{W'}$ by restriction. However, since for any $f\in \bR$, $\xi(f)|_{\fh'}=0|_{\fh'}=0$, we have that $\pi_{W'}\circ \pi'(\xi)=0$. On the other hand, $\pi'(\xi)$ is the unique extension of $\pi_{W'}\circ \pi'(\xi)\in \Der_\bR(\delta)$ to $\Der_{\bS'}^{W'}$, since $\Frac(\bS)$ is a finite extension of $\Frac(\bR)$ in characteristic zero. Clearly $0$ extends $0$, so $\pi'(\xi)=0$.
    
    To summarize, we have shown that, for any Cartan subalgebra $\fh'\subset \fg$ and for any $f\in \bU$, we have $\xi(f)|_{\fh'}=0$. Since Cartan subalgebras cover a dense subset of $\fg$, the equality above implies that $\xi(f)=0$ for any $f\in \bU$, i.e., that $\xi=0$. Thus $\pi$ is injective. 
    \medskip
    
    For the surjectivity, consider, for $i=1,\ldots,n$, $\mu_i$ and $\nu_i$ as defined in (\ref{eq-defmunu}). Recall that the $\mu_i$'s define a basis of $\Der_\bS^W$ and notice that, by construction, $\pi(\nu_i)=\mu_i$. Since $\nu_i\in \Der_\bU^G$, for $i=1,\ldots,n$, the $\bR$-morphism $\pi$ is surjective and thus an isomorphism.
   \end{proof}
   
   \begin{proposition}\label{prop-freeinvariantmodule}
    The morphism $\pi_G$ is an isomorphism of $\bR$-modules and there is a $G$-equivariant inclusion $\Der_\bU(-\log(\Delta))^G:=\Der_\bU^G\otimes_\bR \bU \subset \Der_\bU(-\log(\Delta))$, where $\Der_\bU(-\log(\Delta))^G$ is a free $\bU$-module of rank $n$.
   \end{proposition}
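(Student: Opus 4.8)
The plan is to first upgrade the previous proposition, where $\pi$ was shown to be an isomorphism, into the statement that $\pi_G$ is an isomorphism, and then to exploit the freeness this yields to build the claimed inclusion. For the first assertion I would read off the conclusion from the commutative square already established: writing $r:\Der_\bR(\Delta)\to\Der_\bR(\delta)$ for the restriction isomorphism furnished by Chevalley's theorem, the previous proposition gives $r\circ\pi_G=\pi_W\circ\pi$. Here $\pi$ is an isomorphism by that proposition, $\pi_W$ is an isomorphism by Terao's theorem, and $r$ is an isomorphism, so $\pi_G=r^{-1}\circ\pi_W\circ\pi$ is an isomorphism of $\bR$-modules. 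Since $\Der_\bR(\delta)$ is a free $\bR$-module of rank $n$ by Terao's theorem, this transports freeness to $\Der_\bU^G$; concretely $\nu_1,\ldots,\nu_n$ is an $\bR$-basis of $\Der_\bU^G$, because $\pi(\nu_i)=\mu_i$ and $\mu_1,\ldots,\mu_n$ is an $\bR$-basis of $\Der_\bS^W$.

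Next I would construct the inclusion. Because $\Der_\bU^G\subset\Der_\bU(\Delta)$ (established in the previous proposition) and $\Der_\bU(\Delta)$ is a $\bU$-module, the $\bR$-linear inclusion $\iota:\Der_\bU^G\hookrightarrow\Der_\bU(\Delta)$ is $\bR$-bilinear against the $\bR$-algebra $\bU$, hence factors through a $\bU$-linear map $\Der_\bU(\Delta)^G=\Der_\bU^G\otimes_\bR\bU\to\Der_\bU(\Delta)$, $\eta\otimes u\mapsto u\eta$. This map is $G$-equivariant for the $G$-action on the source induced by $\bU$ (with $G$ acting trivially on the invariant derivations $\Der_\bU^G$): for $g\in G$ one has $g\cdot(u\eta)=(g\cdot u)(g\cdot\eta)=(g\cdot u)\eta$, which matches the image of $g\cdot(\eta\otimes u)=\eta\otimes(g\cdot u)$. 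Moreover, freeness of $\Der_\bU^G$ over $\bR$ gives at once $\Der_\bU(\Delta)^G\cong\bU^n$, a free $\bU$-module of rank $n$ with $\bU$-basis $\nu_1,\ldots,\nu_n$.

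The main obstacle will be the injectivity of this map, which is exactly the $\bU$-linear independence of $\nu_1,\ldots,\nu_n$ inside $\Der_\bU$. I would reduce this to the Jacobian criterion: writing $\nu_i=\sum_j(\partial F_i/\partial x_j)\,\partial/\partial x_j$ in the free $\bU$-basis $\{\partial/\partial x_j\}$ of $\Der_\bU$, a relation $\sum_i u_i\nu_i=0$ with $u_i\in\bU$ amounts to $\sum_i u_i(\partial F_i/\partial x_j)=0$ for all $j$, i.e.\ a $\Frac(\bU)$-linear dependence among the rows of the Jacobian matrix of $(F_1,\ldots,F_n)$. Since $F_1,\ldots,F_n$ generate the polynomial algebra $\bR$ they are algebraically independent over $\CC$, so in characteristic zero their Jacobian matrix has rank $n$ over $\Frac(\bU)$, forcing $u_1=\cdots=u_n=0$. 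Injectivity in hand, the image $\sum_i\bU\nu_i$ lies in $\Der_\bU(\Delta)$ because each $\nu_i$ does and the module is $\bU$-stable, which completes the $G$-equivariant inclusion of the free rank-$n$ module $\Der_\bU(\Delta)^G$ into $\Der_\bU(\Delta)$.
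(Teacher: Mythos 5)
Your proof is correct, and its first half is exactly the paper's own argument: the paper obtains that $\pi_G$ is an isomorphism by composing the isomorphisms from the commutative square of the preceding proposition ($\pi$, $\pi_W$, and the Chevalley restriction $\Der_\bR(\Delta)\cong\Der_\bR(\delta)$), and then transports Terao's freeness of $\Der_\bR(\delta)\cong\Der_\bR^0$ through $\pi_G$ to conclude that $\Der_\bU^G\otimes_\bR\bU$ is free of rank $n$ over $\bU$. Where you genuinely go beyond the paper is in the second half: the paper's two-line proof stops at freeness and never verifies that the multiplication map $\Der_\bU^G\otimes_\bR\bU\to\Der_\bU(\Delta)$, $\eta\otimes u\mapsto u\eta$, is injective --- which is precisely the content of the asserted \emph{inclusion} $\Der_\bU(\Delta)^G\subset\Der_\bU(\Delta)$ --- nor does it spell out $G$-equivariance. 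Your argument fills this in correctly: since $\pi$ is an isomorphism with $\pi(\nu_i)=\mu_i$ and $\mu_1,\ldots,\mu_n$ is an $\bR$-basis of $\Der_\bS^W$, the $\nu_i$ form an $\bR$-basis of $\Der_\bU^G$, so injectivity reduces to the $\bU$-linear independence of $\nu_1,\ldots,\nu_n$, i.e., to the triviality of the left kernel over $\Frac(\bU)$ of the Jacobian matrix $(\partial F_i/\partial x_j)$; this holds by the Jacobian criterion because $F_1,\ldots,F_n$ are algebraically independent (they generate the polynomial algebra $\bR$) and we are in characteristic zero. The equivariance computation $g\cdot(u\eta)=(g\cdot u)(g\cdot\eta)=(g\cdot u)\eta$ for invariant $\eta$ is likewise consistent with the action $(g\cdot\eta)(f)=g\cdot(\eta(g^{-1}\cdot f))$ used in the paper, and lands in $\Der_\bU(\Delta)$ because $\Der_\bU^G\subset\Der_\bU(\Delta)$ was established in the previous proposition and that module is $\bU$-stable. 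In short: same route as the paper where the paper gives details, plus a complete and correct justification of the inclusion and rank claims that the paper leaves implicit.
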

   
   \begin{proof}
    The morphism $\pi_G$ is an isomorphism between $\Der_\bU^G$ and $\Der_\bR(-\log(\Delta))$ by the previous lemma. Since $\Der_\bU^G\cong \Der_\bR^0\cong \Der_\bR(-\log(\Delta))$ as $\bR$-modules and $\Der_\bR(-\log(\Delta))$ is a free $\bR$-module of rank $n$, we deduce that $\Der_\bU^G\otimes_\bR \bU$ is a free $\bU$-module of rank equal to $n$.
   \end{proof}

\subsection{Minimality of the resolution for simply laced types}
   In the following we use the identification of the $\bU$-module $\Der_\bU(-\log(\Delta))$ with the kernel of the differential surjective morphism $\fg\otimes \bU \to J_{D}(d-1)$ defining the restricted Jacobian ideal.
  
   \begin{theorem}
   \label{thm_minimality}
   Let $G$ be of simply laced type. Then the resolution in Theorem \ref{thm_res} is minimal.
   \end{theorem}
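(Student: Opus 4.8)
The plan is to deduce minimality from the fact that the generating set furnished by Theorem~\ref{thm_res} is already minimal, using the restriction to the Cartan subalgebra together with Terao's freeness theorem. First I would localise the only possible source of non-minimality. For simply laced $G$ (so $\epsilon=1$, $s=n$) write the resolution as $0\to F_2\xrightarrow{d_2}F_1\xrightarrow{d_1}\fg\otimes\cO\to\cJ_D(d-1)\to 0$, with $F_1=\bigoplus_i\cO(-e_i)\oplus\fg\otimes\cO(-1)$ and $F_2=\bigoplus_i\cO(-e_i-1)$. Since $\fg\otimes\cO$ is generated in degree $0$ while every summand of $F_1$ has positive degree, $d_1$ has all entries in $\mathfrak{m}$; the same degree count shows that the only entries of $d_2$ that could be units are those from a summand $\cO(-e_i-1)$ to a \emph{trivial} summand $\cO(-e_k)$ with $e_k=e_i+1$ (no unit can target $\fg\otimes\cO(-1)$, as that entry has degree $e_i\ge 1$). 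Hence it suffices to rule out such unit entries, and for this it is enough to show that the image $N:=\IM(d_1)$ is minimally generated by $F_1$: minimal generation means $\ker(d_1)\subseteq\mathfrak{m}F_1$, and since $\IM(d_2)=\ker(d_1)$ this forces every entry of $d_2$ into $\mathfrak{m}$.

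The heart of the argument is a tangential restriction to $\fh$. By the preceding propositions the trivial summands $\bigoplus_i\cO(-e_i)$ of $F_1$ are generated by the invariant derivations $\nu_i=\nabla(F_i)$ (on which $\Der_\bU(\Delta)^G$ is free), while by Proposition~\ref{prop-LieB} the summand $\fg\otimes\cO(-1)$ is the image of $\bad$, spanned by the fundamental vector fields $v_g=\sum_i[g,x]_i\,\partial_i$ of the $G$-action, $g\in\fg$. I would introduce the $\CC$-linear map $\mathrm{res}\colon\Der_\bU\to\Der_\bS$, $\sum_i\xi_i\partial_i\mapsto\sum_{i\le n}(\xi_i|_\fh)\partial_i$, which is $\bU$-linear over $\bU\to\bS=\bU/\bI$. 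The two lemmas proved above give $\mathrm{res}(\nu_i)=\mu_i=\nabla(f_i)$; and since $[g,x]\in\bigoplus_\alpha\fg_\alpha$ is Killing-orthogonal to $\fh$ for every $x\in\fh$, the tangential part of $v_g$ vanishes, i.e. $\mathrm{res}(v_g)=0$. As the $\nu_i$ and $v_g$ generate $\Der_\bU(\Delta)$, it follows that $\mathrm{res}$ carries $\Der_\bU(\Delta)$ into $\Der_\bS(-\log\delta)=\bigoplus_i\bS\,\mu_i$, which is a \emph{free} $\bS$-module by the KMSS formula and Terao's theorem, and moreover $\mathrm{res}(\mathfrak{m}\,\Der_\bU(\Delta))\subseteq\mathfrak{m}_\bS\,\Der_\bS(-\log\delta)$.

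With this in hand I would verify, degree by degree, that the generators are independent modulo $\mathfrak{m}$. Suppose a constant-coefficient combination of the degree-$d$ generators lies in $(\mathfrak{m}\,\Der_\bU(\Delta))_d$. Applying $\mathrm{res}$ kills the $v_g$ and sends the invariant generators to the $\mu_i$, yielding $\sum_{e_i=d}c_i\mu_i\in\mathfrak{m}_\bS\,\Der_\bS(-\log\delta)$; freeness of the $\mu_i$ forces each $c_i=0$. For $d\ge 2$ only invariant generators occur, so we are done; for $d=1$ the remaining combination of the $v_g$ (together with $\nu_1$, a multiple of the Euler derivation) lies in $(\mathfrak{m}\,\Der_\bU(\Delta))_1=0$, and there the $v_g$ are linearly independent because $\ad$ is injective, while $\nu_1\notin\bA$ since the identity has nonzero trace. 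Thus $F_1$ minimally generates $N$, whence $d_2$ has entries in $\mathfrak{m}$; combined with the minimality of $d_1$ noted above, the resolution is minimal. The main obstacle is the geometric dichotomy of the second paragraph — that the $G$-fundamental fields are not tangent to $\fh$ and so restrict to zero, whereas the invariant derivations restrict to a \emph{free} Saito basis — because it is exactly this dichotomy that turns the representation-theoretic degree bookkeeping into the freeness obstruction that excludes the dangerous unit entries.
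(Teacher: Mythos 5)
Your architecture is right and most of the individual steps check out: the degree count and equivariance correctly localise the only possible unit entries of $d_2$ to maps $\bU(-e_i-1)\to\bU(-e_k)$ with $e_k=e_i+1$; the semilinear restriction $\mathrm{res}\colon\Der_\bU\to\Der_\bS$ does kill the fundamental fields, since $[g,x]\in\bigoplus_\alpha\fg_\alpha$ for $x\in\fh$ (this tangential-vanishing computation is correct, and is a nice variant of the paper's lemma, which only treats \emph{invariant} derivations); the degree-one case is fine, because $F_1$ has no degree-zero part, so $(\mathfrak{m}\,\Der_\bU(-\log(D)))_1=0$, and your trace argument ($\mathrm{tr}(\ad g)=0$ while $\nu_1$ is a nonzero multiple of the Euler field) is valid. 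However, there is a genuine gap at the crucial Nakayama step: you assert that $\mathrm{res}$ ``sends the invariant generators to the $\mu_i$'', i.e.\ that the images $g_1,\dots,g_n$ in $\Der_\bU(-\log(D))$ of the trivial summands of $F_1$ are the derivations $\nu_i=\nabla(F_i)$. Nothing in Theorem \ref{thm_res} or Proposition \ref{prop-LieB} identifies them: equivariance only gives that each $g_i$ is a $G$-invariant derivation of degree $e_i$, hence $g_i=\sum_{e_j\leq e_i}r_{ij}\nu_j$ with $r_{ij}\in\bR$ homogeneous. Your degree-$d$ argument then concludes $c_i=0$ only if the constant block $(r_{ij})_{e_i=e_j=d}$ is invertible — and that invertibility is \emph{equivalent} to the minimal-generation statement you are proving, so as written the argument is circular exactly at the step meant to exclude the dangerous unit entries (consecutive exponents are ubiquitous, e.g.\ in type $A_n$, and exponents even repeat, e.g.\ in $D_4$).

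The good news is that the gap closes with tools you already have. Since $\mathrm{res}$ annihilates the whole $\bU$-module generated by the $v_g$ (not only the degree-one part), $\mathrm{res}(\Der_\bU(-\log(D)))$ is the $\bS$-span of the $\mathrm{res}(g_i)$; on the other hand, the paper's propositions give $\Der_\bU^G\subseteq\Der_\bU(-\log(D))$, $\mathrm{res}(\eta)=\pi(\eta)$ for $\eta$ invariant, and $\pi$ surjective onto $\Der_\bS^W$, so by Terao's theorem $\mathrm{res}(\Der_\bU(-\log(D)))\supseteq\bS\cdot\Der_\bS^W=\Der_\bS(-\log(\delta))=\bigoplus_j\bS\mu_j$. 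Hence the $\mathrm{res}(g_i)$ \emph{generate} the free module $\bigoplus_j\bS\mu_j$; a graded surjection $\bigoplus_i\bS(-e_i)\to\bigoplus_j\bS(-e_j)$ between free modules with equal Hilbert series is an isomorphism, so the $\mathrm{res}(g_i)$ form a basis and in particular are independent modulo $\mathfrak{m}_\bS$ — precisely what your degree-$d$ step needs. With this patch your proof is a legitimate, somewhat more explicit alternative to the paper's: the paper never restricts all of $\Der_\bU$, but instead uses equivariance to see that the trivial summands generate the invariant module $\Der_\bU(\Delta)^G$, which is free of rank $n$ in the \emph{same} degrees, so that $\bigoplus_i\bU(-e_i)\to\Der_\bU(\Delta)^G$ is an isomorphism and no nonzero constant morphism $\bU(-j)\to\bU(-j)$ can occur; your mechanism (kill the variant part by restriction to $\fh$, then apply graded Nakayama in the free Saito module) buys a concrete picture of \emph{why} the unit entries are excluded, at the cost of the unimodularity verification above.
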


   \begin{proof}
   In the simply laced types the resolution obtained takes the following form:
 $$   
 0\to \bigoplus_{1\leq i\leq n} \bU(-e_i-1) \to      \bigoplus_{1\leq i\leq n} \bU(-e_i) \oplus \fg\otimes \bU(-1) \to \Der_\bU(-\log(\Delta)) \to 0.    
 $$
This resolution is $G$-equivariant for the natural action of $G$ on $\fg$ and for the trivial action of $G$ on $\bU(j)$ for any $j$. Moreover, $\dim(\bigoplus_{1\leq i\leq n}\bU(-e_i))=\rank(G)=n$. Thus, the $G$-invariant $\bU$-module $\Der_\bU(-\log(\Delta))^G$ is generated by the image of $\bigoplus_{1\leq i\leq n}\bU(-e_i)$ inside $\Der_\bU(-\log(\Delta))$. Furthermore, since $\Der_\bU(-\log(\Delta))^G$ is free of rank $n$ by Proposition \ref{prop-freeinvariantmodule}, the restriction morphism $\bigoplus_{1\leq i\leq n}\bU(-e_i) \to \Der_\bU(-\log(\Delta))$ is injective and induces a fundamental isomorphism 
\begin{equation} \label{exponents and invariant derivations}
    \bigoplus_{1\leq i\leq n}\bU(-e_i) \xrightarrow{\simeq} \Der_\bU(-\log(\Delta))^G
\end{equation}
As a consequence, there cannot be a non-zero constant morphism $\bU  (-j) \to \bU(-j)$, i.e., the resolution is minimal.
   \end{proof}

   We define the module
   $$
   \Der_\bU(-\log(\Delta))_1:= \Der_\bU(-\log(\Delta)) / \Der_\bU(-\log(\Delta))^G.
   $$
   From the proof of Theorem \ref{thm_minimality}, we get the following result.
   \begin{corollary}
   Assume $G$ is of simply laced type. A free resolution of the $\bU$-module $\Der_\bU(-\log(\Delta))_1$ is: 
   $$
   0\to \bigoplus_{i=1}^n \bU(-e_i-1) \to \fg \otimes \bU(-1) \to \Der_\bU(-\log(D))_1 \to 0.
   $$
   \end{corollary}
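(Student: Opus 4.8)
The plan is to realise the asserted sequence as the quotient of the minimal resolution of Theorem \ref{thm_minimality} by the invariant summand $\Der_\bU(\Delta)^G$. I write that resolution as
\[
0\to P_1 \xrightarrow{\phi} M_0\oplus M_1 \xrightarrow{\psi} \Der_\bU(-\log(D))\to 0,
\]
where $P_1=\bigoplus_{i=1}^n\bU(-e_i-1)$, $M_0=\bigoplus_{i=1}^n\bU(-e_i)$ and $M_1=\fg\otimes\bU(-1)$. The single input I would import from the proof of Theorem \ref{thm_minimality} is that $\psi$ carries $M_0$ \emph{isomorphically} onto the submodule $\Der_\bU(\Delta)^G\subset\Der_\bU(-\log(D))$; equivalently, $\psi|_{M_0}$ factors as $M_0\xrightarrow{\sim}\Der_\bU(\Delta)^G\hookrightarrow\Der_\bU(-\log(D))$.

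First I would assemble a short exact sequence of complexes. View $B_\bullet=[\,P_1\to M_0\oplus M_1\to\Der_\bU(-\log(D))\,]$ as a complex in homological degrees $2,1,0$, and let $A_\bullet$ be the subcomplex with $A_2=0$, $A_1=M_0$, $A_0=\Der_\bU(\Delta)^G$ and differential $A_1\to A_0$ the isomorphism furnished by Theorem \ref{thm_minimality}. The maps $A_1=M_0\hookrightarrow M_0\oplus M_1=B_1$ and $A_0=\Der_\bU(\Delta)^G\hookrightarrow\Der_\bU(-\log(D))=B_0$ define an inclusion $A_\bullet\hookrightarrow B_\bullet$, the square in degrees $1,0$ commuting exactly because $\psi|_{M_0}$ equals the composite displayed above. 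Since the inclusion is termwise injective, the quotient complex is
\[
C_\bullet:\qquad 0\to P_1\longrightarrow M_1\longrightarrow \Der_\bU(-\log(D))_1\to 0,
\]
using $(M_0\oplus M_1)/M_0\cong M_1=\fg\otimes\bU(-1)$ and $\Der_\bU(-\log(D))/\Der_\bU(\Delta)^G=\Der_\bU(-\log(D))_1$.

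The conclusion is then forced by the long exact homology sequence of $0\to A_\bullet\to B_\bullet\to C_\bullet\to 0$: the complex $A_\bullet$ is acyclic because $M_0\to\Der_\bU(\Delta)^G$ is an isomorphism, and $B_\bullet$ is acyclic because it is a resolution, so every homology group of $C_\bullet$ vanishes. This is precisely the exactness of the three-term sequence with middle term $\fg\otimes\bU(-1)$ and kernel $\bigoplus_{i=1}^n\bU(-e_i-1)$, as claimed. The only genuine content sits in the identification of $M_0$ with $\Der_\bU(\Delta)^G$ supplied by Theorem \ref{thm_minimality}; granting it, the argument is purely formal, and the injectivity of the induced map $P_1\to M_1$ comes for free (any $z$ with $\phi(z)\in M_0$ would satisfy $\phi(z)\in M_0\cap\ker\psi=0$, since $\psi|_{M_0}$ is injective). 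If one prefers to avoid the snake lemma, the same output follows by splitting $\Der_\bU(-\log(D))=\Der_\bU(\Delta)^G\oplus\Der_\bU(-\log(D))_1$ and performing a block-triangular change of basis on $M_0\oplus M_1$ that diagonalises $\psi$.
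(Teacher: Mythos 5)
Your argument is correct and is precisely the derivation the paper intends when it says the corollary follows from the proof of Theorem \ref{thm_minimality}: the only nonformal input, namely that the resolution map restricts to an isomorphism $\bigoplus_{i=1}^n \bU(-e_i)\xrightarrow{\ \sim\ }\Der_\bU(\Delta)^G$, is exactly what that proof establishes, and quotienting the minimal resolution by this acyclic subcomplex yields the stated sequence. One caveat on your closing aside: the splitting $\Der_\bU(-\log(D))=\Der_\bU(\Delta)^G\oplus \Der_\bU(-\log(D))_1$ is only established later in the paper (in the final theorem, via $\ker(\bad)$ and Lemma \ref{lemma-kerad}), so at this point in the text you should rely on your main quotient-complex argument, which correctly avoids any appeal to that splitting.
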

   
   \begin{remark}
       We believe that this result can be generalised to the non-simply laced case, and even further to the case of \emph{complex} reflection groups and graded Lie algebras. 
   \end{remark}

\subsection{$G$-variant logarithmic derivations}
   
   The restriction to $\fg\otimes \cO(-1)$ of the middle map in the resolution described in (\ref{diag-Ginvresolution}) induces a morphism $$\bad:\fg\otimes \bU(-1) \to \fg\otimes \bU.$$ We want to understand what the kernel of this morphism is.
   
   \begin{lemma}\label{lemma-kerad}
     In the simply laced case, each derivation $$\nu_i=\sum_{j=1}^{\ell}\frac{\partial F_i}{\partial x_j}\frac{\partial }{\partial x_j}\in \fg\otimes \bU(-1), \qquad \text{for $1\leq i\leq n$,}$$ 
     lies in $\ker(\bad)$.
   \end{lemma}
   
   \begin{proof}
   By Lemma \ref{prop-LieB}, the restriction of the middle map in (\ref{diag-Ginvresolution}) to $\fg\otimes\cO(-1)$ is the Lie bracket of $\fg$, i.e., the differential at the identity of the adjoint action of $G$ on $\fg$. The induced morphism $\fg\otimes \bU(-1) \to \fg\otimes \bU$ is thus the differential of the action of $G$ on $\fg\otimes \bU$, which can be identified with $\Der_\bU$. On an element $\nu\in \Der_\bU$, the action $\psi_\nu:G\to \Der_\bU$ is given by $g\mapsto g\cdot \nu$. Since any $\nu_i$, for $1\leq i\leq n$, is a $G$-invariant derivation, we get that $g\cdot \nu_i=\nu_i$ for all $g\in G$; thus the differential of $\psi_{\nu_i}$, along any tangent direction at the identity of $G$, vanishes. This shows that $\nu_i$ is in the kernel of $\bad$.
   \end{proof}
  
We conclude this section by proving Theorem \ref{main simply laced}.
When $G$ is of simply laced type, we show that the module  $\Der_\bU(-\log(\Delta))_1$ is a direct summand of the module of logarithmic derivations and that furthermore it coincides with the adjoint image module $\bA = \IM(\bad)$ of $G$\textit{-variant logarithmic derivations}.

\begin{theorem} \label{diamogli un nome}
Let $G$ be of simply laced type. Then we have isomorphisms $\Der_\bU(-\log(\Delta))_1 \simeq \bA$ and 
$$
\Der_\bU(-\log(\Delta)) \simeq \bA \oplus \Der_\bU(-\log(\Delta))^G.
$$
\end{theorem}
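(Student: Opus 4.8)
The plan is to realise both $\bA$ and $\Der_\bU(\Delta)^G$ as submodules of $\Der_\bU\cong\fg\otimes\bU$, to prove that their sum is direct and exhausts $\Der_\bU(-\log(D))$; the isomorphism $\Der_\bU(-\log(D))_1\simeq\bA$ will then be immediate from the definition of $\Der_\bU(-\log(D))_1=\Der_\bU(\Delta)/\Der_\bU(\Delta)^G$. First I would record the two ingredients coming from the previous subsections. On one hand, by Theorem~\ref{thm_minimality} and its proof, the image of the summand $\bigoplus_i\bU(-e_i)$ under the presentation map of the (minimal) resolution is exactly $\Der_\bU(\Delta)^G=\bigoplus_i\bU\,\nu_i$, a free submodule of $\fg\otimes\bU$ of rank $n$. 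On the other hand, by Proposition~\ref{prop-LieB} the restriction of the middle map of Theorem~\ref{thm_res} to the summand $\fg\otimes\bU(-1)$ is $\bad$, whose image is $\bA$. Since in the simply laced case $s=n$ and $\epsilon=1$, these two summands together constitute the whole middle term of the resolution, so their images generate the image of the presentation map. This gives $\Der_\bU(-\log(D))=\Der_\bU(\Delta)^G+\bA$ as submodules of $\fg\otimes\bU$; equivalently, the addition map $\sigma\colon\Der_\bU(\Delta)^G\oplus\bA\to\Der_\bU(-\log(D))$ is surjective.

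It remains to prove that $\sigma$ is injective, i.e. that $\Der_\bU(\Delta)^G\cap\bA=0$. Both $\Der_\bU(\Delta)^G$ and $\bA$ are submodules of the free module $\fg\otimes\bU$, hence torsion-free; therefore so is their direct sum, and it suffices to check that $\sigma$ becomes an isomorphism after tensoring with $\Frac(\bU)$. Indeed $\ker(\sigma)$ is then a torsion submodule of a torsion-free module, and so vanishes.

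The heart of the argument, and the step I expect to be most delicate, is the computation of the fibres of the two submodules at a generic point together with the verification that they are complementary. I would evaluate at a regular semisimple element $v\in\fg$. The fibre of $\bA=\IM(\bad)$ at $v$ is $\IM(\ad_v)=[\fg,v]$, of dimension $\dim\fg-n$. The fibre of $\Der_\bU(\Delta)^G$ at $v$ is the span of the gradients $\nabla F_i(v)$; since each $F_i$ is $G$-invariant, differentiating $F_i(\exp(t\xi)\cdot v)$ at $t=0$ yields $\langle\nabla F_i(v),[\xi,v]\rangle=0$ for all $\xi\in\fg$, so by the invariance and non-degeneracy of the Killing form $\nabla F_i(v)\in[\fg,v]^{\perp}=\ker(\ad_v)$. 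On the regular locus the $\nabla F_i(v)$ are linearly independent (Kostant), so this fibre equals $\ker(\ad_v)$, of dimension $n$. Finally, because $v$ is semisimple the endomorphism $\ad_v$ is semisimple, whence $\fg=\ker(\ad_v)\oplus\IM(\ad_v)$. Thus at $v$ the two fibres are complementary and span $\fg$, and since their ranks $n$ and $\dim\fg-n$ add up to $\dim\fg=\ell+1$, the map $\sigma\otimes\Frac(\bU)$ is an isomorphism of $\Frac(\bU)$-vector spaces.

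Combined with torsion-freeness, this proves $\sigma$ injective, hence $\Der_\bU(-\log(D))=\Der_\bU(\Delta)^G\oplus\bA$. Consequently $\Der_\bU(-\log(D))_1=\Der_\bU(-\log(D))/\Der_\bU(\Delta)^G\simeq\bA$, as claimed. (An alternative route to directness would be to show, using the $G$-equivariance of the minimal resolution of Theorem~\ref{thm_minimality}, that the component of the first differential into the trivial summand $\bigoplus_i\bU(-e_i)$ vanishes; but this seems to require knowing $\ker(\bad)$ beforehand, so I prefer the generic-fibre argument, which also re-proves that $\IM(\nu)=\ker(\bad)\cong\bigoplus_i\bU(-e_i-1)$ via Lemma~\ref{lemma-kerad}.)
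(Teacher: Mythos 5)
Your proof is correct, but it takes a genuinely different route from the paper's. The paper proceeds homologically: it fits the four-term sequence $0\to\ker(\bad)\to\fg\otimes\bU(-1)\xrightarrow{\bad}\fg\otimes\bU\to\coker(\bad)\to 0$ into a commutative diagram with the minimal resolution of Theorem \ref{thm_minimality}, applies the snake lemma to get $0\to\ker(\bad)\to\bigoplus_i\bU(-e_i-1)\xrightarrow{\beta}\bigoplus_i\bU(-e_i)\to\cdots$, and then uses Lemma \ref{lemma-kerad} (the $\nu_i$ lie in $\ker(\bad)$) to conclude $\ker(\bad)\simeq\bigoplus_i\bU(-e_i-1)$, forcing $\beta=0$; the splitting falls out of the diagram. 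You instead work directly with the two submodules of $\fg\otimes\bU$: you get surjectivity of the addition map from the same inputs (Theorem \ref{thm_minimality} identifying the image of the free summand with $\Der_\bU(\Delta)^G$, Proposition \ref{prop-LieB} identifying the restriction of $\varphi$ with $\bad$), and you prove directness by a generic-fibre computation at a regular semisimple $v$: the evaluation of $\bA$ is $[\fg,v]=\IM(\ad_v)$, the evaluation of $\Der_\bU(\Delta)^G$ is $\mathrm{span}\{\nabla F_i(v)\}=\ker(\ad_v)$ (via Kostant's independence of the $dF_i$ at regular elements and Killing-invariance giving $\IM(\ad_v)^\perp=\ker(\ad_v)$), and semisimplicity of $\ad_v$ gives $\fg=\ker(\ad_v)\oplus\IM(\ad_v)$; torsion-freeness then upgrades generic injectivity to injectivity. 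The trade-off: the paper's argument is self-contained within its own machinery and produces the exact sequence identifying $\ker(\bad)$ as a structural by-product needed for the presentation of $\bA$ in Theorem \ref{main simply laced}, whereas your argument imports the classical Kostant regularity theorem but avoids both the snake lemma and Lemma \ref{lemma-kerad} entirely, recovering $\ker(\bad)\simeq\bigoplus_i\bU(-e_i-1)$ as a corollary of the splitting rather than using it as an input — note that the paper needs its diagram precisely because Lemma \ref{lemma-kerad} alone only gives the inclusion $\bigoplus_i\bU\nu_i\subseteq\ker(\bad)$, not equality. One small point worth spelling out in your write-up: the passage from pointwise complementarity at regular semisimple points to $\bigl(\Der_\bU(\Delta)^G\otimes\Frac(\bU)\bigr)\cap\bigl(\bA\otimes\Frac(\bU)\bigr)=0$ requires clearing denominators and evaluating a putative nonzero intersection element at a regular semisimple point where it does not vanish (possible since such points form a dense open set); this is routine but should be said.
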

\begin{proof}
The resolution obtained in Theorem \ref{thm_minimality} induces the following commutative diagram, constructed starting from the central vertical exact sequences
\begin{small}
$$
\xymatrix@C-2.5ex{
0 \ar[r] & \ker (\bad) \ar@{^(->}[d] \ar^-\nu[r] & \fg \otimes \bU(-1) \ar@{^(->}[d] \ar[r]^-{\bad} & \fg \otimes \bU \ar@{=}[d] \ar[r] & \coker (\bad) \ar[r] \ar@{->>}[d] & 0\\
0 \ar[r] & \bigoplus_{i=1}^n \bU(-e_i-1) \ar@{->>}[d]^{\beta} \ar[r]^-{(\alpha, \beta)} & \bigoplus_{i=1}^n \bU(-e_i) \oplus \fg \otimes \bU(-1) \ar@{->>}[d] \ar[r]^-{\varphi} & \fg \otimes \bU  \ar[r] & J_D(d-1) \ar[r] & 0\\
& \bigoplus_{i=1}^n \bU(-e_i) \ar[r]^\simeq & \bigoplus_{i=1}^n \bU(-e_i) 
}
$$
\end{small}
where $\varphi$ denotes the composition of the surjection
$$
\bigoplus_{i=1}^n \bU(-e_i) \oplus \fg \otimes \bU(-1) \longrightarrow \Der_\bU(-\log(\Delta))
$$
with the natural injection $\Der_\bU(-\log(\Delta)) \hookrightarrow \fg \otimes \bU$, whose cokernel is given by the restricted Jacobian ideal $J_D$, up to a degree shift.
Applying the snake lemma to the previous diagram induces the following exact sequence
$$
0 \to \ker (\bad) \to \bigoplus_{i=1}^n \bU(-e_i-1) \stackrel{\beta}{\to} \bigoplus_{i=1}^n \bU(-e_i) \to \coker (\bad) \to J_D(d-1) \to 0.
$$
Furthermore, in light of Lemma \ref{lemma-kerad} and the first injection of the previous sequence, we have
\begin{equation} \label{ker ad}
    \ker (\bad) \simeq \bigoplus_{i=1}^n \bU(-e_i-1),
\end{equation} and hence $\beta = 0$. This implies that
\begin{align*}
    \Der_\bU(-\log(\Delta)) =& \IM(\varphi) \simeq \bA \oplus \Der_\bU(-\log(\Delta))^G\simeq\\ &\simeq \Der_\bU(-\log(\Delta))_1 \oplus \Der_\bU(-\log(\Delta))^G. \qedhere
\end{align*}
\end{proof}

Now we are in position to prove our main result.
\begin{proof}[Proof of Theorem \ref{main simply laced}]
Using Theorem \ref{diamogli un nome} and the isomorphism \eqref{exponents and invariant derivations}, we get a decomposition 
\[
\Der_\bU(-\log(\Delta)) \simeq \bigoplus_{i=1}^n \bU(-e_i) \oplus \bA,
\]
Recall that the module $\bA$ is defined as the image of $\bad : \fg\otimes \bU(-1) \to \fg\otimes \bU$. Using \eqref{ker ad} we get that $\bA$ fits into:
\[
0\to \bigoplus_{i=1}^n \bU(-e_i-1) \xrightarrow{\nu} \fg\otimes \bU(-1) \to \bA \to 0.\]
\end{proof}
In terms of sheaves, the previous theorem is translated into the next result.
\begin{corollary}
Let $G$ be a simple linear algebraic $\bk$-group of simply laced type and $\fg$ its lie algebra.
Let $e_1,\ldots,e_n$ be the exponents of $\fg$ and let $D=\VV(\Delta)$ be the adjoint discriminant hypersurface. Then:
\[
  \hat \cT_\PP\langle D \rangle 
\simeq \bigoplus_{i=1}^n \bU(1-e_i) \oplus \hat \cT_\PP^{\mathbf{ad}},
\]
where the sheaf $\hat \cT_\PP^{\mathbf{ad}}$ is the image of $\bad : \fg\otimes \cO_{\PP(\fg)} \to \fg\otimes \cO_{\PP(\fg)}(1)$ and fits into:
\[
0\to \bigoplus_{i=1}^n \cO_{\PP(\fg)}(-e_i) \xrightarrow{\nu} \fg\otimes \cO_{\PP(\fg)} \to \hat \cT_\PP^{\mathbf{ad}} \to 0.
\]
\end{corollary}

\bibliographystyle{amsalpha.v2}
\bibliography{discriminants}

\end{document}